\documentclass[11pt,a4paper]{article}

\usepackage[utf8]{inputenc}
\usepackage[T1]{fontenc}
\usepackage{lmodern}
\usepackage[a4paper,hmargin=2cm,vmargin=2.5cm]{geometry}
\usepackage{amsmath,amssymb,amsthm}
\usepackage{graphicx}
\usepackage{xcolor}
\usepackage{authblk}
\usepackage[numbers,sort&compress]{natbib}
\usepackage{url}

\theoremstyle{plain}
\newtheorem{theorem}{Theorem}
\newtheorem{proposition}[theorem]{Proposition}%
\newtheorem{lemma}[theorem]{Lemma}%
\newtheorem{corollary}[theorem]{Corollary}%

\theoremstyle{definition}

\newtheorem{notation}{Notation}
\newtheorem{assumption}{Assumption}

\theoremstyle{remark}
\newtheorem{remark}{Remark}%

\numberwithin{equation}{section}

\newcommand{\dx}{\mathrm{d}x}
\newcommand{\dsigma}{\mathrm{d}\sigma}
\newcommand{\dt}{\mathrm{d}t}
\newcommand{\ds}{\mathrm{d}s}
\newcommand{\dz}{\mathrm{d}z}

\usepackage{tikz}
\usepackage{pgfplots}
\pgfplotsset{compat=newest}
\usepackage{pgfplotstable}

\pgfplotstableread[col sep=space]{
h L2 H1
2.828427124746190624e-01 7.865312283988015740e-02 1.478836433459725652e-01
1.414213562373095312e-01 1.387503777921226741e-02 6.744318689623447771e-02
7.071067811865476560e-02 2.597917294477363435e-03 3.275274325472957160e-02
3.535533905932738280e-02 5.494773674174813520e-04 1.612695555393450775e-02
1.767766952966369140e-02 1.231557624150977795e-04 8.017874087636796612e-03
}\FigFourPOneData

\pgfplotstableread[col sep=space]{
h L2 H1
2.828427124746190624e-01 1.233446254963804951e-02 6.698834886043765480e-02
1.414213562373095312e-01 3.235327331867929920e-03 3.253961957419163747e-02
7.071067811865476560e-02 7.925792181431726494e-04 1.610015519049503088e-02
3.535533905932738280e-02 1.921869832114108940e-04 8.019310788168806242e-03
1.767766952966369140e-02 4.698456969783042388e-05 4.002250163936154313e-03
}\FigFourPTwoData

\pgfplotstableread[col sep=space]{
h_squared L2(FEM) H1(FEM) L2 H1
1.989731058892788840e-01 1.079017751957941050e-02 4.377726308769035873e-02 1.285378946497648944e-01 2.850969168725091540e-01
9.994638746611982094e-02 2.671797028785069441e-03 1.900147886656795848e-02 2.645291901094792841e-02 1.375146942726353239e-01
4.999843394275343272e-02 6.412155548806956372e-04 8.823519920479555578e-03 5.539355217659703776e-03 6.668629088113349479e-02
2.499764073201517223e-02 1.612196638008814656e-04 4.515687839386813528e-03 1.260136343556173552e-03 3.200851544596220594e-02
1.249997469885046238e-02 3.983765176546507616e-05 2.173323691464657675e-03 2.800975259495349940e-04 1.423671648430036539e-02
}\FigFourFEMData

\pgfplotstableread[col sep=space]{
gamma h rel_L2_H1 rel_Linf_L2
1.000000000000000056e-01 2.000000000000000111e-01 7.442660687658178009e-01 5.101600142934795690e-01
1.000000000000000056e-01 1.000000000000000056e-01 2.070863525139429728e-01 7.918143908398406206e-02
1.000000000000000056e-01 5.000000000000000278e-02 7.842277813087447647e-02 1.245319451615151986e-02
1.000000000000000056e-01 2.500000000000000139e-02 3.162782318910882517e-02 2.449391906008903299e-03
1.000000000000000000e+00 2.000000000000000111e-01 2.694751965810245342e-01 1.101938490367220386e-01
1.000000000000000000e+00 1.000000000000000056e-01 1.244985476076275049e-01 1.985613487151873316e-02
1.000000000000000000e+00 5.000000000000000278e-02 5.685440587513872213e-02 3.980579656022571917e-03
1.000000000000000000e+00 2.500000000000000139e-02 2.616305659583396948e-02 9.210415403783423806e-04
1.000000000000000000e+01 2.000000000000000111e-01 3.863122175644372325e-01 3.370617028180838415e-01
1.000000000000000000e+01 1.000000000000000056e-01 1.375783072140974173e-01 6.517264092544171328e-02
1.000000000000000000e+01 5.000000000000000278e-02 5.554203182167614244e-02 9.163014292991954191e-03
1.000000000000000000e+01 2.500000000000000139e-02 2.561895599221934022e-02 1.151530061064877115e-03
1.000000000000000000e+02 2.000000000000000111e-01 9.562628793847091080e-01 9.945081788351353147e-01
1.000000000000000000e+02 1.000000000000000056e-01 4.622992116022908160e-01 3.459339026660267491e-01
1.000000000000000000e+02 5.000000000000000278e-02 1.511549054951883686e-01 7.555350931018708915e-02
1.000000000000000000e+02 2.500000000000000139e-02 4.110327460113701037e-02 1.161063053174286180e-02
1.000000000000000000e+03 2.000000000000000111e-01 1.209019034776908086e+00 1.251673049020526785e+00
1.000000000000000000e+03 1.000000000000000056e-01 8.560713597662927832e-01 6.390674856233025114e-01
1.000000000000000000e+03 5.000000000000000278e-02 4.920822544533455245e-01 2.561409811193491315e-01
1.000000000000000000e+03 2.500000000000000139e-02 2.007400684702384808e-01 7.044364006088439656e-02
}\FigFiveData

\pgfplotstableread[col sep=space]{
h L2 H1
1.414213562373095312e-01 8.565770165878981018e-03 7.677646348854842706e-02
7.071067811865476560e-02 2.060627635183127221e-03 3.763319546545370459e-02
3.535533905932738280e-02 5.012847012052984224e-04 1.868381184743373585e-02
}\FigSixData

\pgfplotstableread[col sep=space]{
h L2 H1
7.071067811865476560e-02 1.424536830005208128e-03 7.050114272659968075e-03
3.535533905932738280e-02 1.529158659183590930e-04 2.659043026567096436e-03
1.767766952966369140e-02 2.588325325604790380e-05 1.228623980063562292e-03
8.838834764831845700e-03 5.629368529171168762e-06 6.027355029313033420e-04
4.419417382415922850e-03 1.351142395091094178e-06 3.000094156093401358e-04
}\FigSevenData

\pgfplotstableread[col sep=space]{
h L2 H1
1.154700538379251490e-01 1.720175021909841478e-02 1.053955015036648762e-01
8.660254037844386521e-02 9.331032802809106907e-03 7.880643245321712620e-02
6.928203230275509217e-02 5.748623911324872784e-03 5.889962105328153119e-02
5.773502691896257449e-02 3.900555879897314292e-03 5.250594044913367564e-02
}\FigEightData

\newcommand{\logLogSlopeTriangle}[5]
{

    \pgfplotsextra
    {
        \pgfkeysgetvalue{/pgfplots/xmin}{\xmin}
        \pgfkeysgetvalue{/pgfplots/xmax}{\xmax}
        \pgfkeysgetvalue{/pgfplots/ymin}{\ymin}
        \pgfkeysgetvalue{/pgfplots/ymax}{\ymax}

        \pgfmathsetmacro{\xArel}{#1}
        \pgfmathsetmacro{\yArel}{#3}
        \pgfmathsetmacro{\xBrel}{#1-#2}
        \pgfmathsetmacro{\yBrel}{\yArel}
        \pgfmathsetmacro{\xCrel}{\xArel}

        \pgfmathsetmacro{\lnxB}{\xmin*(1-(#1-#2))+\xmax*(#1-#2)} 
        \pgfmathsetmacro{\lnxA}{\xmin*(1-#1)+\xmax*#1} 
        \pgfmathsetmacro{\lnyA}{\ymin*(1-#3)+\ymax*#3} 
        \pgfmathsetmacro{\lnyC}{\lnyA+#4*(\lnxA-\lnxB)}
        \pgfmathsetmacro{\yCrel}{(\lnyC-\ymin)/(\ymax-\ymin)} 

        \coordinate (A) at (rel axis cs:\xArel,\yArel);
        \coordinate (B) at (rel axis cs:\xBrel,\yBrel);
        \coordinate (C) at (rel axis cs:\xCrel,\yCrel);

        \draw[#5]   (A)-- node[pos=0.5,anchor=north] {1}
                    (B)--
                    (C)-- node[pos=0.5,anchor=west] {#4}
                    cycle;
    }
}

\usepackage[colorlinks=true,linkcolor=blue,citecolor=blue,urlcolor=blue]{hyperref}

\begin{document}

\title{A \texorpdfstring{$\varphi$}{phi}-FEM approach for time-dependent domains with unfitted meshes}

\author[1]{Michel Duprez\thanks{Corresponding author: \href{mailto:michel.duprez@inria.fr}{michel.duprez@inria.fr}}}
\author[2]{Johan Marguet}
\author[2]{Alexei Lozinski}
\author[3]{Igor Voulis}
\affil[1]{ICube, Université de Strasbourg, CNRS, Inria, F-67000 Strasbourg, France}
\affil[2]{LMB (UMR 6623), Université Marie et Louis Pasteur, CNRS, F-25000 Besançon, France}
\affil[3]{Institute for Numerical and Applied Mathematics, University of Göttingen, Lotzestr. 16--18, 37083 Göttingen, Germany}
\date{}

\maketitle

\begin{abstract}
In this work, we propose an unfitted finite element scheme to approximate the
solution of the heat equation on moving domains. We use the $\varphi$-FEM paradigm in
which the computational domain is described implicitly by a level-set function
$\varphi$. This function is incorporated into the variational formulation in order to
enforce the boundary conditions, which allows the use of unfitted meshes in space.
Such a strategy avoids the need for remeshing at each time step and makes it possible
to handle complex geometrical evolutions. Moreover, the $\varphi$-FEM approach has the
advantage of being simple to implement within standard finite element libraries. We
introduce a fully discrete scheme that combines the $\varphi$-FEM spatial
discretization with the lowest-order discontinuous Galerkin method in time. Under
regularity assumptions on the level-set function and a mild restriction linking the
time step to the mesh size, we establish an optimal \emph{a priori} error estimate in
the $L^2(0,T;H^1)$ norm. Finally, we present several numerical experiments that
confirm this convergence rate, exhibit a second-order convergence in the
$L^{\infty}(0,T;L^2)$ norm, and illustrate the robustness and accuracy of the
proposed method.

\medskip
\noindent\textbf{Keywords:} unfitted finite element method; level-set method; moving domain; heat equation; a priori error estimate.

\smallskip
\noindent\textbf{Mathematics Subject Classification (2020):} 65M60; 65M85; 65M15; 35K05; 35R37.
\end{abstract}

\section{Introduction}

The classical finite element method (FEM) typically relies on meshes that conform to the boundary of the computational domain. While this requirement is natural for simple geometries, it can become restrictive when dealing with complex shapes or with domains that evolve in time.
To overcome this difficulty, Immersed Boundary Methods (IBM) \cite{IBMrev} have been developed. These approaches allow the use of background meshes that are independent of the physical boundary by extending the governing equations to a larger computational domain.
Recently, the CutFEM methodology \cite{cutfem, cutfem_mixed} has emerged as a robust alternative. In this framework, the variational formulation is integrated only over the portions of the mesh elements intersected by the physical domain, while additional stabilization terms are introduced to guarantee numerical stability and optimal accuracy.
Another related approach is the Shifted Boundary Method \cite{sbm}, which approximates the boundary conditions through a Taylor expansion around the true boundary.

Most unfitted finite element methods have originally been developed for partial differential equations posed on static domains. These methods provide appropriate frameworks for problems on complex geometries since they rely on a background mesh that does not need to conform to the domain boundary, thereby avoiding costly mesh generation or mesh deformation procedures. However, when the computational domain evolves in time, additional difficulties arise due to the changing intersection between the moving boundary and the background mesh. In particular, as the interface moves across the mesh, new elements may appear while others disappear, which requires dynamically updating the geometric reconstruction, the quadrature rules, and possibly the stabilization terms. In addition, the time discretization becomes more delicate, since expressions such as $\partial_t u \approx \tau^{-1}(u^n-u^{n-1})$ are not straightforward to define when $u^n$ and $u^{n-1}$ are defined on different domains.
As a consequence, extending classical unfitted approaches to moving domains leads to additional implementation and analysis challenges. Nevertheless, several recent works have proposed cutFEM discretizations specifically designed to handle evolving geometries; see for instance \cite{heimann2025discretization,heimann2023geometrically,preuss2018higher}
 and \cite{burman2022eulerian,wahl-richter-lehrenfeld2022unfitted} for Eulerian time-stepping schemes.

In this work, we adopt the $\varphi$-FEM approach, originally introduced in \cite{duprez2020phi} for the Poisson equation with Dirichlet boundary conditions. The main idea of $\varphi$-FEM is to incorporate the level-set function $\varphi$ directly into the discrete formulation. In this framework, boundary conditions can be enforced either through penalization or by enriching the finite element space with the level-set function $\varphi$. As a consequence, the computational mesh does not need to conform to the boundary of the domain.
Additional stabilization terms are introduced to ensure optimal convergence rates as well as an optimal conditioning of the resulting finite element matrix. One of the main advantages of the method lies in its ease of implementation.
Several extensions of the method have been proposed in the literature, including formulations for Neumann boundary conditions \cite{neumann}, the Stokes equations \cite{duprez2023phi}, as well as crack and interface problems \cite{chapitre}. The $\varphi$-FEM approach has also been combined with neural network techniques to accelerate computations \cite{phiFEMFNO}, and adapted to finite difference discretizations \cite{phiFD}. The case of the heat equation on a stationary domain has been studied in \cite{heat}. In the present work, we extend this analysis to the case of moving boundaries.

The manuscript is organized as follows. In Section~\ref{Sect:Scheme_and_main_results}, we introduce the numerical scheme and state the main theoretical results. Their proofs are provided in Section~\ref{Sect:Proof}. Finally, Section~\ref{Sect:Numerical_results} presents several numerical experiments illustrating the method and confirming the theoretical convergence results.

\section{\texorpdfstring{{$\varphi$}}{phi}-FEM scheme and main results}\label{Sect:Scheme_and_main_results}

In this article, we consider the heat equation in the moving domain $\Omega (t)\subset \mathbb{R} ^d$ ($d=2,3$) for $t\in [0,T]$ with $T>0$:
\begin{subequations}\label{eq:heat}
\begin{alignat}{3}
	\frac{\partial u}{\partial t} - \Delta u &= f && \quad \text{in } \Omega (t)  && \text{ for all } t \in [0, T], \\
	u &= 0 && \quad \text{on } \partial \Omega (t) && \text{ for all } t \in [0, T], \\
	u(0) &= u^0 && \quad \text{on } \Omega (0), &&
\end{alignat}
\end{subequations}
where {$f\in L^2(Q_T)$, $Q_T = \left\{ (t, x) : t \in [0, T] \text{ and } x \in \Omega (t) \right\}$}
and $u^0\in L^2(\Omega(0))$. We refer to \cite{burman2022eulerian} for the well-posedness of this {problem (that reference actually deals with a more complicated Stokes system, but its arguments apply verbatim to the simpler situation considered here).}

We suppose that the domain is described by a given level set function $\varphi$, i.e. for all $t\in[0,T]$
\[ \Omega (t) = \{x \in \mathbb{R}^d : \varphi (t, x) < 0\}. \]
For the discretization in time, we introduce the uniform grid $t_n = n \tau$ with
$n = 0, 1, \ldots, N$ ($N\in\mathbb{N}^*)$ and $\tau=T/N$ the time step. We also denote by $I_n = [t_{n -
	1}, t_n]$ the time slab. For the discretization in space, we introduce first the
background mesh $\mathcal{T}_h^{\mathcal{O}}$ on a domain $\mathcal{O}$ that
contains  $\Omega (t)$ {for all $t\in[0,T]$.} For each $n \geqslant 1$, we define
the active mesh for the slab $I_n$
\[ \mathcal{T}_h^n = \{T \in \mathcal{T}_h^{\mathcal{O}} : T \cap \Omega (t)\neq \varnothing \text{ for some } t \in I_n \}.
\]
We shall also denote by $\Omega_h^n$ the domain occupied by mesh $\mathcal{T}_h^n$, i.e. the interior of $\cup_{T\in\mathcal{T}_h^n}T$. {Note that $\Omega(t)\subset\Omega_h^n$ for all $t\in I_n$.}

As a first step towards the derivation of our $\varphi$-FEM scheme,
we suppose (on a formal level) that $u (t, \cdot)$ can be extended
from $\Omega (t)$ to $\Omega_h^n$ {for all $t\in I_n$ on each time slab $I_n$}, and this extension (still denoted by $u$) satisfies the heat equation on $I_n \times \Omega_h^n$ with the RHS (still denoted by $f$) which should be also extended to $I_n \times \Omega_h^n$. We then multiply the governing PDE by a test function $v = v (t, x)$ and integrate over $I_n \times \Omega_h^n$. This gives, {writing henceforth $u(t_n)$ instead of $u(t_n,\cdot)$ whenever there is no ambiguity,}
\begin{multline*} \int_{\Omega_h^n} u (t_n) v (t_n) \dx - \int_{I_n} \int_{\Omega_h^n} u
\frac{\partial v}{\partial t} \dx \dt + \int_{I_n}
\int_{\Omega_h^n} \nabla u \cdot \nabla v \dx \dt - \int_{I_n}
\int_{\partial \Omega_h^n} \partial_{\nu}{u} v \dsigma
\dt \\
 = \int_{\Omega_h^n} u (t_{n - 1}) v (t_{n - 1}) \dx + \int_{I_n} \int_{\Omega_h^n} fv \dx \dt.
\end{multline*}
Now, taking the lowest-order (piecewise constant in time) discontinuous Galerkin approximation, we approximate $u(t, x)$ by $\varphi (t, x)  \tilde{u}^n (x)$ for each $t \in I_n$ and $\tilde{u}^n$ defined on $\Omega_h^n$, so that the homogeneous Dirichlet boundary conditions on $\partial\Omega(t)$ are automatically satisfied. Similarly, the test function is taken as $v (t, x) = \varphi (t, x)\tilde{v}^n(x)$ for $t \in I_n$ and $\tilde{v}^n$ defined on $\Omega_h^n$. The weak formulation above suggests then the following discretization in time:
\begin{multline*}	
	\int_{\Omega_h^n} \varphi^2 (t_n)  \tilde{u}^n  \tilde{v}^n
	\dx - \int_{I_n} \int_{\Omega_h^n} \frac{\partial \varphi}{\partial t}  \tilde{u}^n \varphi  \tilde{v}^n \dx \dt + \int_{I_n} \int_{\Omega_h^n}
	\nabla (\varphi \tilde{u}^n) \cdot \nabla (\varphi \tilde{v}^n) \dx \dt -
	\int_{I_n} \int_{\partial \Omega_h^n} \partial_{\nu}{(\varphi\tilde{u}^n)}  \varphi \tilde{v}^n  \dsigma \dt
\\
  = \int_{\Omega_h^n \cap \Omega_h^{n - 1}} \varphi^2 (t_{n - 1})  \tilde{u}^{n
	- 1}  \tilde{v}^n \dx + \int_{\Omega_h^n \setminus \Omega_h^{n - 1}} \varphi^2 (t_{n - 1})  \tilde{u}^n  \tilde{v}^n \dx + \int_{I_n}
\int_{\Omega_h^n} f \varphi \tilde{v}^n \dx \dt.
\end{multline*}
We have made here the following choice to approximate the integral
$\int_{\Omega_h^n} u (t_{n - 1}) v \dx$: we approximate $u (t_{n - 1})$ by the available information from the previous
time slab, i.e. $\varphi (t_{n - 1})  \tilde{u}^{n - 1}$ wherever possible, i.e. on $\Omega_h^n \cap \Omega_h^{n - 1}$; on the remaining part $\Omega_h^n\setminus \Omega_h^{n - 1}$, we approximate $u (t_{n - 1})$ by $\varphi (t_{n - 1})  \tilde{u}^n$.

To discretize in space, we introduce the standard {$\mathbb{P}_k$} finite
element spaces with a fixed  $k \geqslant 1$ for each time slab $I_n$, $n = 1, \ldots, N$:
\[ V_h^n =\left\{v \in H^1 (\Omega^n_h) : v|_T \in \mathbb{P}_k (T),   \ \forall T \in \mathcal{T}^n_h \right\} . \]
Using these spaces to discretize in space the trial and test function in the formulation above, and computing explicitly the integrals of $\varphi$ in time when possible, {and adding some stabilization terms to be specified below,} we now state our $\varphi$-FEM scheme as: for each $n = 1, \ldots,
N$, find $\tilde{u}_h^n \in V_h^n$ s.t.
\begin{multline}\label{Eq:scheme}
  \int_{\Omega_h^n \cap \Omega_h^{n - 1}} \frac{\varphi^2 (t_n) + \varphi^2 (t_{n
  - 1})}{2}  \tilde{u}_h^n  \tilde{v}_h^n \dx + \int_{\Omega_h^n
  \setminus \Omega_h^{n - 1}} \frac{\varphi^2 (t_n) - \varphi^2 (t_{n - 1})}{2}
  \tilde{u}_h^n  \tilde{v}_h^n \dx \\
  + \int_{I_n} \int_{\Omega_h^n} \nabla (\varphi \tilde{u}^n_h) \cdot \nabla
  (\varphi \tilde{v}_h^n) \dx \dt - \int_{I_n} \int_{\partial
  \Omega_h^n} \partial_{\nu}{(\varphi \tilde{u}^n_h)} \varphi  \tilde{v}_h^n
  \dsigma \dt + \ \text{stab} (\tilde{u}_h^n,
  \tilde{v}_h^n)  \\
   = \int_{\Omega_h^n \cap \Omega_h^{n - 1}} \varphi^2 (t_{n - 1})
  \tilde{u}_h^{n - 1}  \tilde{v}_h^n \dx + \int_{I_n} \int_{\Omega_h^n}
  f \varphi \tilde{v}_h^n \dx \dt + \hspace{0.17em}
  \text{stab}_{\text{RHS}} (\tilde{v}_h^n)
\end{multline}
for all $\tilde{v}_h^n \in V_h^n$. At the initial step, $n = 1$, the first term on the RHS should be replaced by
\[ \int_{\Omega_h^0} u^0 \varphi (t_0)  \tilde{v}_h^1 \dx, \]
where $\Omega_h^0$ is the domain of the mesh at initial time
\[ \mathcal{T}_h^0 = \{T \in \mathcal{T}_h^{\mathcal{O}} : T \cap \Omega (0)
   \neq \varnothing\} , \]
{and} $u^0$ is the given initial condition, which should be extended from
$\Omega (0)$ to $\Omega_h^0$. In this way, we avoid the explicit introduction of $\tilde{u}_h^0$, bypassing the division of the initial condition by $\varphi (0)$, which could be cumbersome in practice. 

Stabilization terms, as inspired directly by the $\varphi$-FEM for
Poisson-Dirichlet \cite{duprez2020phi} and for the heat equation \cite{heat}, are
\begin{multline*} \text{stab} (\tilde{u}_h^n, \tilde{v}_h^n) = \gamma h \sum_{F\in\mathcal{F}_h^{n, \Gamma}}  \int_{I_n}\int_{F}
 [\![\partial_\nu (\varphi \tilde{u}^n_h) ]\!]
   [\![\partial_\nu (\varphi \tilde{v}_h^n) ]\!] \dsigma \dt \\
   + \gamma h^2 {\sum_{T\in\mathcal{T}_h^{n, \Gamma}}} \int_{I_n} \int_{T} \left(
   \frac{\partial \varphi}{\partial t}  \tilde{u}^n_h - \Delta (\varphi
   \tilde{u}^n_h) \right)  \left( \frac{\partial \varphi}{\partial t}
   \tilde{v}^n_h - \Delta (\varphi \tilde{v}_h^n) \right) \dx \dt
\end{multline*}
and, on the RHS of the scheme,
\[ \text{stab}_{\text{RHS}} (\tilde{v}_h^n) = \gamma h^2  {\sum_{T\in\mathcal{T}_h^{n, \Gamma}}} \int_{I_n}\int_{T} f \left( \frac{\partial \varphi}{\partial t}
   \tilde{v}^n_h - \Delta (\varphi \tilde{v}_h^n) \right) \dx \dt. \]
Here, $\mathcal{T}_h^{n, \Gamma}$ is the submesh of $\mathcal{T}_h^n$ consisting of the cells visited by the boundary $\partial \Omega (t)$ during the time interval $I_n$. More precisely,
\[ \mathcal{T}_h^{n, \Gamma} = \{ T \in \mathcal{T}_h^n : \exists~ t \in I_n \text{ s.t. }T \not\subset \Omega (t) \}. \]
{We denote by $\Omega_h^{n,\Gamma}$ the domain occupied by $\mathcal{T}_h^{n,\Gamma}$, i.e. the interior of $\cup_{T\in\mathcal{T}_h^{n,\Gamma}}T$; note that $\Omega_h^n\setminus\Omega(t)\subset\Omega_h^{n,\Gamma}$ for every $t\in I_n$, and $\Omega_h^n\setminus\Omega_h^{n-1}\subset\Omega_h^{n,\Gamma}$.}
Moreover, $\mathcal{F}_h^{n, \Gamma}$ is the set of the interior mesh faces of $\mathcal{T}_h^n$ belonging to $\mathcal{T}_h^{n, \Gamma}$, i.e. $\mathcal{F}_h^{n, \Gamma}$ collects the interior faces of the mesh $\mathcal{T}_h^{n, \Gamma}$ plus the faces on the boundary of $\mathcal{T}_h^{n, \Gamma}$ which are not on the boundary of $\mathcal{T}_h^{n}$. {On all such faces, $[\![\cdot ]\!]$ stand for the jumps.}

\begin{remark}\label{remNonConsis}
The time derivative approximation $\frac{\partial\varphi}{\partial t}  \tilde{u}^n_h$ in the stabilization term is not fully consistent with the exact derivative $\frac{\partial u}{\partial t}=\frac{\partial \varphi}{\partial t}\tilde{u}^n+\varphi \frac{\partial \tilde{u}^n}{\partial t}$. It lacks indeed a discretization of $\varphi \frac{\partial \tilde{u}}{\partial t}$. We shall see
however, that these inconsistencies do not hinder the order of convergence of the scheme.
\end{remark}

The analysis of the method will be carried out assuming, essentially, that the level set $\varphi$ is sufficiently smooth, and that it changes sufficiently slowly in time, so that the boundary $\Gamma(t)=\{x:\varphi(t,x)=0\}$ crosses a small number of cell layers when time goes from $t_{n-1}$ to $t_n$. Moreover, $\varphi$ should behave like the signed distance to $\Gamma(t)$ near $\Gamma(t)$ (without requiring that it is exactly the signed distance). More precisely, we shall adopt the following assumptions:

\begin{assumption}\label{asm0}
	At any time $t\in[0,T]$, the boundary $\Gamma(t)$ can be covered by open sets $\mathcal{U}_i$, $i=1,\ldots,N_{\mathcal{U}}=N_{\mathcal{U}}(t)$, and one can introduce on every $\mathcal{U}_i$ local coordinates $\xi_1,\ldots,\xi_d$ with $\xi_d=\varphi(t,\cdot)$ such that all the partial derivatives $\partial^\alpha\xi/\partial x^\alpha$ and $\partial^\alpha x/\partial \xi^\alpha$ up to order $k+1$ are bounded by some $C_{\xi}>0$, uniformly in $t$ and $i$. 
	{Moreover, there exist a positive integer $N_\text{over}$ and $\delta,m>0$, all independent of $t$ and $i$, such that
	\begin{itemize}
		\item[(i)] in the local coordinates $\xi_1,\ldots,\xi_d$ on $\mathcal{U}_i$, this set is a cylinder $\omega_i\times(-\delta,\delta)$;  
		\item[(ii)] 
		each $\mathcal{U}_i$ has a non-empty intersection with at most $N_\text{over}-1$ other $\mathcal{U}_j$, $j\not =i$;
		\item[(iii)] $|\varphi(t,\cdot)|\ge m$ on $\mathcal{O}\setminus\cup_{i=1,\ldots,N_{\mathcal{U}}}\mathcal{U}_i$ with some $m>0$;
		\item[(iv)] for every $n=1,\ldots,N$ and every $t\in I_n$,  $\Omega_h^{n,\Gamma}\subset\cup_{i=1,\ldots,N_{\mathcal{U}}}\mathcal{U}_i$.   
	\end{itemize}}
\end{assumption}

\begin{assumption}\label{asm1}
	The level set  $\varphi(t,x)$ is of class $C^1$ in $t$ and of class ${C^{k+1}}$ in $x$ on $[0,T]\times \mathcal{O}$. All the corresponding derivatives of $\varphi$ are bounded by some $C_{\varphi}>0$.
\end{assumption}

\begin{assumption}\label{asm2}
	For every $n\in\{1,\ldots,N\}$, the cells of the boundary submesh $\mathcal{T}_h^{n,\Gamma}$ can be {distributed among} patches $\{\Pi_i \}_{i = 1,\ldots, N_{\Pi}}$, {each patch being made of some cells of $\mathcal{T}_h^{n,\Gamma}$ together with one additional cell taken in the interior,} having the following properties (cf. Fig. \ref{FigPatch}):
	\begin{itemize}
		\item Each patch $\Pi_i$ is a connected set composed of a mesh cell $T_i\in\mathcal{T}_h^n\setminus\mathcal{T}_h^{n,\Gamma}$ and some mesh elements in $\mathcal{T}_h^{n,\Gamma}$. More precisely, $\Pi_i = \{T_i\} \cup \Pi_i^{\Gamma}$ with $\Pi_i^{\Gamma}{}\subset\mathcal{T}_h^{n,\Gamma}$ containing at most $M_\Pi$ cells;
		\item $\mathcal{T}_h^{n,\Gamma} = \cup_{i = 1}^{N_{\Pi}} \Pi_i^{\Gamma}$;
		\item $\Pi_i \cap \Pi_j = \varnothing$ if $i \neq j$.
	\end{itemize}
	By a slight abuse of notation, we shall also write $\Pi_i$ and $\Pi_i^\Gamma$ for the corresponding open domains, i.e. the interiors of $\cup_{T\in\Pi_i}T$ and of $\cup_{T\in\Pi_i^\Gamma}T$. The domains $\Pi_i$ are connected.
\end{assumption}

The assumptions above are essentially the same as those used in the stationary case in \cite{duprez2020phi}. The purpose of Assumption \ref{asm0} is to guarantee the Hardy inequality of Lemma \ref{LemHardy} below with a constant independent of $t$, $n$ and $h$.
Assumptions \ref{asm1} and \ref{asm2} guarantee that $\mathcal{T}_h^{n,\Gamma}$ remains a band of width $O(h)$ around $\Gamma(t)$ for every $t\in I_n$, a property used repeatedly in the analysis below (Lemmas \ref{LemHardy}--\ref{LemMag1}).

\begin{figure}[tbp]
\centering
\begin{tikzpicture}[scale=1.5]
	\foreach \i in {0,...,6}{
		\foreach \j in {0,...,3}{
			\pgfmathsetmacro{\xx}{\i+0.07*sin(131*\i+79*\j)}
			\pgfmathsetmacro{\yy}{\j+0.07*cos(107*\i+53*\j)}
			\coordinate (P\i-\j) at (\xx,\yy);
		}
	}
	\fill[black!12] (P2-1)--(P3-1)--(P2-2)--cycle;
	\fill[black!12] (P3-1)--(P3-2)--(P2-2)--cycle;
	\fill[black!12] (P3-1)--(P4-2)--(P3-2)--cycle;
	\fill[black!45] (P2-0)--(P3-1)--(P2-1)--cycle;
	\foreach \i/\ii in {0/1,1/2,2/3,3/4,4/5,5/6}{
		\foreach \j in {0,...,3}{ \draw[gray,thin] (P\i-\j)--(P\ii-\j); }
	}
	\foreach \i in {0,...,6}{
		\foreach \j/\jj in {0/1,1/2,2/3}{ \draw[gray,thin] (P\i-\j)--(P\i-\jj); }
	}
	\foreach \i/\ii in {0/1,1/2,2/3,3/4,4/5,5/6}{
		\foreach \j/\jj in {0/1,1/2,2/3}{ \draw[gray,thin] (P\i-\j)--(P\ii-\jj); }
	}
	\draw[very thick] (P2-0)--(P3-1)--(P4-2)--(P3-2)--(P2-2)--(P2-1)--cycle;
	\draw[thick,dashed] (P3-1)--(P2-1)--(P3-2)--cycle;
	\draw[thick,smooth,variable=\x,domain=-0.35:6.35,samples=80]
	     plot ({\x},{1.32+0.12*sin(52*\x+15)});
	\draw[thick,smooth,variable=\x,domain=-0.35:6.35,samples=80]
	     plot ({\x},{1.70+0.12*sin(52*\x+15)});
	\draw[->] (5.15,1.36)--(5.15,1.66);
	\node[anchor=west] at (6.4,1.22) {$\Gamma(t_{n-1})$};
	\node[anchor=west] at (6.4,1.70) {$\Gamma(t_n)$};
	\node[fill=white,inner sep=1pt] at (1.0,0.5) {$\Omega(t)$};
	\node[fill=white,inner sep=1pt] at (2.65,1.62) {$\Pi_i^\Gamma$};
	\node[fill=white,inner sep=1pt] at (2.35,0.70) {$T_i$};
\end{tikzpicture}
\caption{A sketch of a patch $\Pi_i$ from Assumption \ref{asm2}, in the situation where the boundary moves from $\Gamma(t_{n-1})$ to $\Gamma(t_n)$ during the time slab $I_n$. Light grey: the cells of $\Pi_i^\Gamma \subset \mathcal{T}_h^{n,\Gamma}$, i.e. cells visited by the boundary during $I_n$; dark grey: the cell $T_i\in\mathcal{T}_h^n\setminus\mathcal{T}_h^{n,\Gamma}$, so that $T_i\subset\Omega(t)$ for every $t\in I_n$; and so $\Pi_i = \{T_i\} \cup \Pi_i^\Gamma$. The dashed segments are the interior faces $\mathcal{F}_i$ of the patch, cf. the proof of Lemma \ref{LemMag1}.}\label{FigPatch}
\end{figure}

\begin{notation} 
Let $Q_T$ denote the space-time domain where the exact problem is posed, i.e.
\[ Q_T = \left\{ (t, x) : t \in [0, T] \text{ and } x \in \Omega (t) \right\} \]
and $Q_n$ denote the space-time slabs corresponding to the time intervals
$I_n$, $n = 1, \ldots, N$, i.e.
\[ Q_n = \left\{ (t, x) : t \in I_n \text{ and }x \in \Omega (t) \right\}. \]
Let $\mathcal{H} ^k (Q_T)$ be the following Bochner type space of function on
$Q_T$:
\begin{multline}\label{Hkspace} \mathcal{H} ^k (Q_T) =
	\bigg\{ v \in L^2 (Q_T) : v (t,
	\cdot) \in H^{k + 1} ( \Omega (t)), \\
	\frac{\partial v}{\partial t} (t, \cdot)
	\in H^k ( \Omega (t)), \
	\frac{\partial^2 v}{\partial t^2} (t, \cdot) \in H^{k	- 1} ( \Omega (t)) \text{ for a.a. }t \in [0, T] \bigg\}
\end{multline}
and define the norm on this space accordingly
\[ \| v \|_{\mathcal{H} ^k (Q_T)}^2 = \int_{0}^T \left( \| v (t, \cdot)
\|_{H^{k + 1} ( \Omega (t))}^2 + \left\| \frac{\partial v}{\partial t} (t,
\cdot) \right\|_{H^k ( \Omega (t))}^2 + \left\| \frac{\partial^2
	v}{\partial t^2} (t, \cdot) \right\|_{H^{k - 1} ( \Omega (t))}^2 \right)
\dt. \]
The spaces $\mathcal{H} ^k (Q_n)$ are defined similarly for $n = 1, \ldots,
N$. {In the proofs of Section \ref{Sect:Proof}, we shall abbreviate the norm of $L^2(I\times D)$ by $\|\cdot\|_{I\times D}$ for a measurable set $D$ and an interval $I$, the norm of $L^2(D)$ by $\|\cdot\|_{D}$, the seminorm of $H^s(D)$ by $|\cdot|_{s,D}$.}
\end{notation}

\begin{theorem}
	\label{thm:error}Adopting Assumptions \ref{asm0}--\ref{asm2},
	let $u \in \mathcal{H} ^k (Q_T)$ be the exact solution to \eqref{eq:heat},
	and suppose that the RHS $f (t, x)$ is extended for every $t \in I_n$, $n =
	1, \ldots, N$, to the bigger domain $\Omega^n_h$, so that $f |_{I_n \times \Omega_h^n}  \in L^2 (I_n ; H^{k - 1} (\Omega^n_h))$, and the
	initial condition $u^0$ is extended to $\Omega^0_h$, so that $u^0 \in H^{k +1} (\Omega^0_h)$. Let \ $u_h^n = \varphi \tilde{u}^n_h$ be the discrete
	solution given by \ \eqref{Eq:scheme}. For $\gamma$ large enough, there
	exist $c, C, h_0, \tau_0 > 0$ depending only on the regularity of the mesh $\mathcal{T}_h^{\mathcal{O}}$, on $k$, on $\gamma$
	and on the constants in Assumptions \ref{asm0}--\ref{asm2},
	such that if $h\leqslant h_0$ and $ch^2\leqslant\tau\leqslant\tau_0$ then
	\begin{multline*}
		\left( \sum_{n = 1}^N \int_{I_n} |u  (t, \cdot) - u_h^n (t, \cdot) |_{H^1
			(\Omega (t))}^2 \dt \right)^{\frac{1}{2}} \leqslant Ch^k \|u^0 \|_{H^{k + 1}
			(\Omega_h^0)}\\
		+ C (h^k + \tau)  \left( \|u\|_{\mathcal{H} ^k (Q_T)} + \left( \sum_{n =
			1}^N \| f \|_{L^2 (I_n ; H^{k - 1} (\Omega^n_h))}^2 \right)^{\frac{1}{2}}
		\right).
	\end{multline*}
\end{theorem}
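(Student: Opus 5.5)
The plan is a Strang-type argument: stability of the scheme in a discrete energy norm, plus consistency of a suitable interpolant of the exact solution, summed over the time slabs. For each $n$ write the scheme \eqref{Eq:scheme} as $a_n(\tilde u_h^n,\tilde v_h^n)=\ell_n(\tilde v_h^n)+m_n(\tilde u_h^{n-1},\tilde v_h^n)$. Here $a_n$ collects the mass terms, the gradient term, the boundary term and $\text{stab}$, and $m_n$ is the transfer term $\int_{\Omega_h^n\cap\Omega_h^{n-1}}\varphi^2(t_{n-1})\tilde u_h^{n-1}\tilde v_h^n$.

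\textbf{Step 1: discrete stability of $a_n$.} Test with $\tilde v_h^n=\tilde u_h^n$. The mass part gives
\[
\tfrac12\|\varphi(t_n)\tilde u_h^n\|^2+\tfrac12\|\varphi(t_{n-1})\tilde u_h^n\|^2_{\Omega_h^n\cap\Omega_h^{n-1}}-\tfrac12\|\varphi(t_{n-1})\tilde u_h^n\|^2_{\Omega_h^n\setminus\Omega_h^{n-1}} .
\]
The negative piece lives in $\Omega_h^{n,\Gamma}$, where $|\varphi(t_{n-1})|\lesssim h$ because the boundary only visits an $O(h)$ band.

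\begin{itemize}
\item By the Hardy-type inequality (Lemma \ref{LemHardy}, with constants uniform in $t$ thanks to Assumption \ref{asm0}), this piece is $\lesssim h^2\|\nabla(\varphi\tilde u_h^n)\|^2$ at a fixed time. Integrating in time costs a factor $h^2/\tau$, and the restriction $\tau\ge ch^2$ makes it absorbable into the gradient term $\int_{I_n}|\varphi\tilde u_h^n|_1^2$.
\item The boundary term $-\int_{I_n}\int_{\partial\Omega_h^n}\partial_\nu(\varphi\tilde u_h^n)\varphi\tilde u_h^n$ is controlled as in \cite{duprez2020phi}. Using the patch construction of Assumption \ref{asm2} and Lemma \ref{LemMag1}, one obtains a bound of the form
\[
\|\nabla(\varphi\tilde u)\|_{\Omega_h^{n,\Gamma}}^2\le \tfrac12\|\nabla(\varphi\tilde u)\|_{\Omega_h^n}^2+C\,\text{stab}\text{-type terms}
\]
at each fixed time. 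Together with $|\varphi|\lesssim h$ on $\partial\Omega_h^n$ and an inverse inequality, this absorbs the boundary term for $\gamma$ large.
\item The term $\partial_t\varphi\,\tilde u$ inside the stabilization is split off by a triangle inequality. It is $O(h^2\|\tilde u\|^2)$, harmless after another Hardy step.
\end{itemize}

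The outcome is coercivity in the norm
\[
|||\tilde v|||_n^2=\tfrac12\|\varphi(t_n)\tilde v\|^2+\tfrac14\|\varphi(t_{n-1})\tilde v\|^2_{\Omega_h^n\cap\Omega_h^{n-1}}+c\int_{I_n}|\varphi\tilde v|_{1,\Omega_h^n}^2+c\,\text{stab}(\tilde v,\tilde v).
\]
The transfer term is handled by Young's inequality as $\tfrac12\|\varphi(t_{n-1})\tilde u^{n-1}\|^2+\tfrac12\|\varphi(t_{n-1})\tilde u^n\|^2_{\cap}$, which telescopes in the usual DG-in-time manner.

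\textbf{Step 2: consistency.} Take $w=\varphi\tilde w$ with $\tilde w=u/\varphi$; this is smooth by the local coordinates of Assumption \ref{asm0} and Hardy, with $\|\tilde w\|_{H^k}\lesssim\|u\|_{H^{k+1}}$. Let $\tilde w^n_h$ be the $\mathbb{P}_k$ interpolant on $\Omega_h^n$ of the time average of $\tilde w$ over $I_n$. Insert $u$ into the scheme and integrate the PDE by parts over $I_n\times\Omega_h^n$; the extended $u$ satisfies the PDE with the extended $f$ there. The residual functional $\mathcal R_n(\tilde v_h)$ then consists of four contributions.
\begin{enumerate}
\item The difference between $u(t)$ and its slab average, of order $\tau\|\partial_t u\|$.
\item Jump terms at $t_{n-1}$, where $u$ is continuous but the scheme uses $\tilde u^n$ on $\Omega_h^n\setminus\Omega_h^{n-1}$. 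This region lies in the $O(h)$ band, where $\varphi(t_{n-1})^2-\varphi(t_n)^2=O(h\tau)$, giving a contribution of order $\tau$ times norms of $u$.
\item The inconsistency of Remark \ref{remNonConsis}: $h^2\int\varphi\,\partial_t\tilde w\,(\cdots)$, bounded by $h\,\|\varphi\partial_t\tilde w\|_{\Omega_h^{n,\Gamma}}\,\text{stab}^{1/2}$. Since $\varphi=O(h)$ on the band, this is $O(h^{k})$ via the $\partial_t u\in H^k$ regularity.
\item The standard stabilization consistency, whose exact part is $f$ and cancels with $\text{stab}_{\text{RHS}}$.
\end{enumerate}
Interpolation errors in $|||\cdot|||$ are $O(h^k)$ using the $C^{k+1}$ regularity of $\varphi$. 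The initial step uses $u^0\in H^{k+1}(\Omega_h^0)$ and yields the term $Ch^k\|u^0\|$.

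\textbf{Step 3: error equation and summation.} Apply Step 1 to $e_h^n=\tilde u_h^n-\tilde w_h^n$, bound $\mathcal R_n(e_h^n)\le\epsilon|||e_h^n|||_n^2+C\epsilon^{-1}(\ldots)^2$, and sum over $n$; the telescoped $L^2$ terms drop out. Then a triangle inequality with the interpolation error and $\Omega(t)\subset\Omega_h^n$ gives the $L^2(H^1)$ bound. The upper restriction $\tau\le\tau_0$ absorbs the $O(\tau)$ perturbations from $\varphi(t_n)$ versus $\varphi(t_{n-1})$ in the telescoping.

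\textbf{Main obstacle.} I expect the delicate point to be Step 1's control of the negative mass contribution on $\Omega_h^n\setminus\Omega_h^{n-1}$ and the telescoping mismatch, since $\|\varphi(t_{n-1})\tilde u_h^n\|_{\cap}$ is not the same norm as the $\|\varphi(t_n)\tilde u_h^n\|$ carried forward. I would first try to dominate this mismatch by $\int_{I_n}|\varphi\tilde u_h^n|_1^2$, which costs $h^2/\tau$ via Hardy and so forces exactly the lower bound $\tau\ge ch^2$.
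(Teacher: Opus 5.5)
There is a genuine gap in Step 1, in your treatment of the boundary term. Your direct handling of the mass terms is fine, and a little leaner than the paper's in/out splitting with Reynolds transport. Test with $\tilde u_h^n$, apply Young to the transfer term with weights $\tfrac12,\tfrac12$, and bound the only negative piece, on $\Omega_h^n\setminus\Omega_h^{n-1}\subset\Omega_h^{n,\Gamma}$, by Lemma \ref{LemInverseT}. For the telescoping to work you need the full coefficient $\tfrac12$, not $\tfrac14$, in front of $\|\varphi(t_{n-1})\tilde u_h^n\|^2_{\Omega_h^n\cap\Omega_h^{n-1}}$.

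\textbf{The gap: the boundary term.} The problem is the second bullet of Step 1, the term $-\int_{I_n}\int_{\partial\Omega_h^n}\partial_\nu u_h^n\,u_h^n$. Cauchy--Schwarz, $|\varphi|\lesssim h$ on $\partial\Omega_h^n$, discrete trace/inverse inequalities and Hardy give at best $C_B\,|u_h^n|_{1,\Omega_h^{n,\Gamma}}|u_h^n|_{1,\Omega_h^n}$, with $C_B$ an $O(1)$ constant. The factor $h$ from $\varphi$ is used up exactly by the two factors $h^{-1/2}$ of the trace inequalities. For discrete functions whose energy sits in a few cell layers along $\Gamma(t)$, this term really is of the same order as the whole gradient energy.

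Absorbing it would need roughly $C_B\sqrt{\alpha}<1$, where $\alpha$ is the fraction in $|u|^2_{1,\Omega_h^{n,\Gamma}}\leqslant\alpha|u|^2_{1,\Omega_h^n}+C\,\mathrm{stab}$. This fails for three reasons:
\begin{itemize}
\item Lemma \ref{LemMag1} does not give $\alpha=\tfrac12$; it only gives $\alpha=C/(1+C)<1$, possibly close to $1$.
\item $\alpha$ cannot be made small by any parameter choice: already for a function linear on a patch, the band fraction of its energy is $|\Pi_i^\Gamma|/|\Pi_i|$.
\item Taking $\gamma$ large only strengthens the stabilization, not the product $C_B\sqrt{\alpha}$.
\end{itemize}

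\textbf{The missing idea.} What you need is the exact cancellation used in \cite{duprez2020phi} and in the paper.
\begin{itemize}
\item Split $\int_{\Omega_h^n}|\nabla u_h^n|^2$ into $\Omega(t)$ and $B^n(t)=\Omega_h^n\setminus\Omega(t)$.
\item Integrate by parts on each $T\setminus\Omega(t)$, using $u_h^n=0$ on $\Gamma(t)$. The boundary term then cancels \emph{exactly}.
\item What remains is $\sum_T\int_{T\setminus\Omega(t)}(\partial_t\varphi\,\tilde u_h^n-\Delta u_h^n)u_h^n+\sum_F\int_{F\cap B^n(t)}[\![\partial_\nu u_h^n]\!]u_h^n$. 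With your mass treatment there is also an extra $-\int_{B^n(t)}\partial_t\varphi\,\varphi|\tilde u_h^n|^2=O(h)\,|u_h^n|^2_{1,\Omega_h^n}$, by Hardy.
\item Because these residuals are multiplied by $u_h^n$ itself, Young with a free $\varepsilon$ handles them. Combine it with $\|u_h^n\|_{\Omega_h^{n,\Gamma}}\lesssim h\,|u_h^n|_{1,\Omega_h^n}$ (Corollary \ref{LemPoin}, Lemma \ref{LemInverse}) and $\gamma$ large.
\item Only after this is Lemma \ref{LemMag1} used, to upgrade $\int_{\Omega(t)}|\nabla u_h^n|^2$ plus stabilization to the full $\int_{\Omega_h^n}|\nabla u_h^n|^2$.
\end{itemize}

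\textbf{Two smaller slips in Step 2.}
\begin{itemize}
\item The theorem lets $f$ be extended arbitrarily. So you cannot assume the extension of $u$ solves the PDE with that $f$ outside $\Omega(t)$; that would be a sideways Cauchy problem. Instead set $\tilde f^n=\partial_t u^n-\Delta u^n$ and bound $h\|f-\tilde f^n\|_{I_n\times\Omega_h^{n,\Gamma}}\lesssim h^k(\cdots)$, using that $f-\tilde f^n$ vanishes on $\Omega(t)$. This is where the $f$-norm in the estimate comes from.
\item The inconsistency term of Remark \ref{remNonConsis} is only $O(h^2)\|\partial_t\tilde u^n\|$, not $O(h^k)$ when $k\geqslant 3$. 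It is acceptable because $h^2\lesssim\tau$, which is the second place the lower bound on $\tau$ is used.
\end{itemize}
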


\begin{remark}
The $L^{\infty}(L^2)$ counterpart of this estimate is not proved here. By analogy with the case of the heat exation in a static domain, treated in \cite{heat}, we conjecture that 
\begin{multline*} \max_{1 \leqslant n \leqslant N} \ \sup_{t \in I_n} \|u(t,\cdot) - u_h^n(t,\cdot)\|_{L^2(\Omega(t))}
\lesssim h^{k+1}\|u^0\|_{H^{k+1}(\Omega_h^0)} \\
+ (h^{k+{1/2}} + \tau) \left( \|u\|^2_{\mathcal{H}^k(Q_T)} + \sum_{n=1}^N \|f\|_{L^2(I_n;H^{k-1}(\Omega_h^n))}^2 \right)^{1/2}.
\end{multline*}
The numerical experiments of Section \ref{Sect:Numerical_results} give evidence of the second-order convergence in $h$ for $k=1$, similar to observations in \cite{heat}.
\end{remark}

\section{Proof of Theorem \ref{thm:error}}\label{Sect:Proof}

In this section, we will first prove the coercivity of the bilinear form, which will be combined in the next subsection with the consistency to obtain the convergence results claimed in Theorem \ref{thm:error}.

\subsection{Coercivity of the bilinear form}

{Let $\mathbb{V}_h$ denote the space of functions in space-time combining the contributions of FE spaces $V^n_h$ multiplied by the level set $\varphi$, i.e. $\mathbb{V}_h=\{U_h = (\varphi \tilde{u}^n_h)_{n \in \{1, \ldots, N\}},\text{ with } \tilde{u}^n_h\in V^n_h\}$.  We shall also use the shortcuts
$u^n_h = \varphi \tilde{u}^n_h$. Similarly, the notation $V_h\in\mathbb{V}_h$ implies $V_h = (\varphi \tilde{v}^n_h)_{n \in \{1, \ldots, N\}}$ and $v^n_h = \varphi\tilde{v}^n_h$.}
With these conventions, scheme (\ref{Eq:scheme}) can be written as: find $U_h\in\mathbb{V}_h$ s.t.
\[ a_h (U_h, V_h) = F (V_h) \quad \forall V_h\in\mathbb{V}_h \]
with
\begin{multline}
  \label{ahalt} a_h (U_h, V_h) = \sum_{n = 1}^N \left[ \int_{\Omega_h^n \cap
  \Omega_h^{n - 1}} \varphi^2 (t_{n - 1}) (\tilde{u}_h^n - \tilde{u}_h^{n - 1}
  \mathbf{1}_{n > 1}) \tilde{v}_h^n \dx + \int_{\Omega_h^n}
  \frac{\varphi^2 (t_n) - \varphi^2 (t_{n - 1})}{2}  \tilde{u}_h^n  \tilde{v}_h^n
  \dx \right.
\\
 + \int_{I_n} \int_{\Omega_h^n} \nabla u^n_h \cdot \nabla v^n_h \dx
   \dt - \int_{I_n} \int_{\partial \Omega_h^n} \partial_{\nu}{u^n_h} v_h^n \dsigma \dt \\
\left. + \gamma h \int_{I_n}\sum_{F\in\mathcal{F}_h^{n, \Gamma}}  \int_{F}
   [\![\partial_\nu u^n_h]\!] [\![\partial_\nu v_h^n]\!] \dsigma \dt +
   \gamma h^2  \sum_{T\in\mathcal{T}_h^{n,\Gamma}}\int_{I_n} \int_{T} \left(
   \frac{\partial \varphi}{\partial t}  \tilde{u}^n_h - \Delta u^n_h \right)
   \left( \frac{\partial \varphi}{\partial t}  \tilde{v}^n_h - \Delta v_h^n
   \right) \dx \dt \right]
\end{multline}
and
\[ F (V_h) = \sum_{n = 1}^N \left[ \int_{I_n} \int_{\Omega_h^n} fv_h^n \dx \dt + \sum_{T\in\mathcal{T}_h^{n,\Gamma}} \gamma h^2  \int_{I_n} \int_{T} f \left( \frac{\partial \varphi}{\partial t}  \tilde{v}^n_h - \Delta
   v_h^n \right) \dx \dt \right] + \int_{\Omega_h^0} u^0 \varphi (t_0)
   \tilde{v}_h^1 \dx. \]

Put $V_h = U_h$ and remark that the first line in the definition of $a_h (U_h,
U_h)$ can be rewritten as the sum of the contributions inside the
corresponding physical domains $\Omega (t_n)$ and $\Omega (t_{n - 1})$
\begin{multline}  \label{termsIN}
	\mathbb{T}^{\text{in}}:=
	\sum_{n = 1}^N \left[ \int_{\Omega (t_{n - 1})} \varphi^2 (t_{n
  - 1}) (\tilde{u}_h^n - \tilde{u}_h^{n - 1} \mathbf{1}_{n > 1})
  \tilde{u}_h^n \dx \right.\\
  \left. + \int_{\Omega (t_n)} \frac{\varphi^2 (t_n)}{2} | \tilde{u}_h^n |^2
  \dx - \int_{\Omega (t_{n - 1})} \frac{\varphi^2 (t_{n - 1})}{2} |
  \tilde{u}_h^n |^2 \dx \right],
\end{multline}
and the remainder
\begin{multline}  \label{termsOUT}
  \mathbb{T}^{\text{out}}:=
	\sum_{n = 1}^N \left[ \int_{(\Omega_h^n \cap \Omega_h^{n -
  1}) \setminus \Omega (t_{n - 1})} \varphi^2 (t_{n - 1}) (\tilde{u}_h^n -
  \tilde{u}_h^{n - 1} \mathbf{1}_{n > 1}) \tilde{u}_h^n \dx \right.\\
  \left. + \int_{\Omega_h^n \setminus \Omega (t_n)} \frac{\varphi^2 (t_n)}{2} |
  \tilde{u}_h^n |^2 \dx - \int_{\Omega_h^n \setminus \Omega (t_{n - 1})}
  \frac{\varphi^2 (t_{n - 1})}{2} | \tilde{u}_h^n |^2 \dx \right] .
\end{multline}

We shall refer to (\ref{termsIN}) as the terms ``in'', and to \
(\ref{termsOUT}) as the terms ``out''. The terms ``in'' can be simplified
using the identity $(\tilde{u}_h^n - \tilde{u}_h^{n - 1})  \tilde{u}_h^n =
\frac{1}{2}  (| \tilde{u}_h^n |^2 - | \tilde{u}_h^{n - 1} |^2 + |
\tilde{u}_h^n - \tilde{u}_h^{n - 1} |^2)$, as follows:
\begin{multline} \label{eq:new in}
\mathbb{T}^{\text{in}}=
\sum_{n = 2}^N \left[ \int_{\Omega (t_n)} \frac{\varphi^2 (t_n)}{2} |
   \tilde{u}_h^n |^2 \dx - \int_{\Omega (t_{n - 1})} \frac{\varphi^2 (t_{n -
   1})}{2} | \tilde{u}_h^{n - 1} |^2 \dx + \int_{\Omega (t_{n - 1})}
   \frac{\varphi^2 (t_{n - 1})}{2} | \tilde{u}_h^n - \tilde{u}_h^{n - 1} |^2
   \dx \right] \\
 + \int_{\Omega (t_0)} \frac{\varphi^2 (t_0)}{2} | \tilde{u}_h^1 |^2 \dx +
   \int_{\Omega (t_1)} \frac{\varphi^2 (t_1)}{2} | \tilde{u}_h^1 |^2 \dx \\
= \int_{\Omega (t_N)} \frac{\varphi^2 (t_N)}{2} | \tilde{u}_h^N |^2 \dx +
   \int_{\Omega (t_0)} \frac{\varphi^2 (t_0)}{2} | \tilde{u}_h^1 |^2 \dx +
   \sum_{n = 2}^N \int_{\Omega (t_{n - 1})} \frac{\varphi^2 (t_{n - 1})}{2}  |
   \tilde{u}_h^n - \tilde{u}_h^{n - 1} |^2 \dx.
   \end{multline}
The terms ``out'' can be rewritten, denoting $B^n (t) = \Omega_h^n \setminus\Omega (t)$, as
\begin{multline} \label{eq:new out}
\mathbb{T}^{\text{out}}=
\sum_{n = 2}^N \int_{\Omega_h^n \cap B^{n - 1} (t_{n - 1})} \varphi^2 (t_{n -
   1})  (\tilde{u}_h^n - \tilde{u}_h^{n - 1})  \tilde{u}_h^n \dx +
   \int_{B^0 (t_0)} \varphi^2 (t_0) | \tilde{u}_h^1 |^2 \dx \\
 + \sum_{n = 1}^N \int_{I_n} \int_{B^n (t)} \varphi \frac{\partial
   \varphi}{\partial t} | \tilde{u}_h^n |^2 \dx \dt.
\end{multline}
Indeed, the last two terms in (\ref{termsOUT}) are
$\frac{1}{2}\int_{B^n(t_n)}\varphi^2(t_n)|\tilde u^n_h|^2\dx-\frac{1}{2}\int_{B^n(t_{n-1})}\varphi^2(t_{n-1})|\tilde u^n_h|^2\dx=\frac{1}{2}\int_{I_n} \frac{\mathrm{d}}{\dt}\int_{B^n (t)} \varphi^2(t)  | \tilde{u}_h^n |^2 \dx \dt$; the Reynolds transport theorem applies and the boundary contribution coming from the motion of $\Gamma(t)$ vanishes since $\varphi(t,\cdot)=0$ on $\Gamma(t)$. We have also used $\Omega_h^0\subset\Omega_h^1$ to write the term with $n=1$ in the first line.

This allows us to decompose $a_h (U_h, U_h)$ into three parts
\begin{equation}
  \label{3termsIOS} a_h (U_h, U_h) = a_h^{\text{in}} (U_h, U_h) +
  a_h^{\text{out}} (U_h, U_h) + a_h^{\text{stab}} (U_h, U_h) .
\end{equation}
Here, the ``in'' contribution $a_h^{\text{in}} (U_h, U_h)$ regroups the parts
of the integrals on $\Omega (t)$:
\[ a_h^{\text{in}} (U_h, U_h) = \mathbb{T}^{\text{in}}
+ \sum_{n = 1}^N \int_{I_n} \int_{\Omega
   (t)} | \nabla u^n_h |^2 \dx \dt \]
In view of \eqref{eq:new in}, they can be written as
\begin{multline*}
a_h^{\text{in}} (U_h, U_h) = \frac{1}{2}  \int_{\Omega (t_N)} \varphi^2 (t_N)
   | \tilde{u}_h^N |^2 \dx + \frac{1}{2}  \int_{\Omega (t_0)} \varphi^2
   (t_0) | \tilde{u}_h^1 |^2 \dx + \sum_{n = 1}^N \int_{I_n} \int_{\Omega
   (t)} | \nabla u^n_h |^2 \dx \dt \\
 + \sum_{n = 2}^N \frac{1}{2}  \int_{\Omega (t_{n - 1})} \varphi^2 (t_{n - 1})
   | \tilde{u}_h^n - \tilde{u}_h^{n - 1} |^2 \dx.
   \end{multline*}

The ``out'' contribution $a_h^{\text{out}} (U_h, U_h)$ regroups the parts of
the integrals outside $\Omega (t)$, i.e. on $B^n (t)$, and the boundary
integrals on $\partial \Omega_h^n$: \
\[ a_h^{\text{out}} (U_h, U_h) = \mathbb{T}^{\text{out}}
+ \sum_{n = 1}^N \Bigg[ \int_{I_n} \int_{B ^n (t)} |\nabla u^n_h|^2 \dx \dt
- \int_{I_n} \int_{\partial\Omega_h^n} \partial_{\nu}{u^n_h} \, u^n_h \dsigma \dt \Bigg].
\]
Using \eqref{eq:new out} and integrating by parts in $x$ on $T \cap B^n (t)$
for each cell $T \in\mathcal{T}^{n, \Gamma}_h$ gives
\[ a_h^{\text{out}} (U_h, U_h) = \sum_{n = 1}^N \left[ \sum_{T\in\mathcal{T}_h^{n,\Gamma}}\int_{I_n}
   \int_{T \setminus \Omega (t)} \left( \frac{\partial
   \varphi}{\partial t}  \tilde{u}_h^n - \Delta u^n_h \right) u^n_h \dx
   \dt \right. \left. + \sum_{F\in\mathcal{F}_h^{n, \Gamma}} \int_{I_n}  \int_{F \cap
   B^n (t)} [\![\partial_\nu u^n_h]\!] u^n_h \dsigma \dt \right] \]
\[ + \sum_{n = 2}^N \int_{\Omega^n_h \cap B^{n - 1} (t_{n - 1})} \varphi^2 (t_{n- 1})  (\tilde{u}_h^n - \tilde{u}_h^{n - 1})  \tilde{u}_h^n \dx
+\int_{B^0 (t_0)} \varphi^2 (t_0) | \tilde{u}_h^1 |^2 \dx. \]

Finally, the stabilization terms read
\[ a_h^{\text{stab}} (U_h, U_h) = \sum_{n = 1}^N \left[ \ \gamma
   h \sum_{F\in\mathcal{F}_h^{n, \Gamma}} \int_{I_n}  \int_{F}  [\![\partial_\nu u^n_h]\!]^2 \dsigma \dt + \gamma h^2  \sum_{T\in\mathcal{T}_h^{n,\Gamma}} \int_{I_n} \int_{T} \left| \frac{\partial \varphi}{\partial t}  \tilde{u}^n_h - \Delta u^n_h \right|^2 \dx \dt \right]. \]

\

With the aid of Young inequality, the expressions for $a_h^{\text{in}}$ and
$a_h^{\text{out}}$ lead to the following bound from below, valid for any
$\varepsilon > 0$:
\begin{align}
  \label{BIGbound} a_h^{\text{in}} (U_h, U_h) & + a_h^{\text{out}} (U_h, U_h)
   \geqslant \frac{1}{2}  \int_{\Omega (t_N)} \varphi^2 (t_N) | \tilde{u}_h^N |^2
  \dx + \frac{1}{2}  \int_{\Omega_h^0} \varphi^2 (t_0) | \tilde{u}_h^1 |^2
  \dx
\\
 & + \sum_{n = 1}^N \underbrace{\int_{I_n} \int_{\Omega (t)} | \nabla u^n_h
   |^2 \dx \dt}_{\text{cf. Lemma } \ref{LemMag1}} +
   \sum_{n = 2}^N \frac{1}{2}  \int_{\Omega (t_{n - 1})} \varphi^2 (t_{n - 1})  |
   \tilde{u}_h^n - \tilde{u}_h^{n - 1} |^2 \dx
 \notag   \\
 & - \sum_{n = 1}^N \left[ \frac{h^2}{2 \varepsilon}  \underbrace{\sum_{T\in\mathcal{T}_h^{n,\Gamma}}\int_{I_n}
   \int_{T} \left| \frac{\partial \varphi}{\partial t}
   \tilde{u}_h^n - \Delta u^n_h \right|^2 \dx \dt}_{\text{ part of stabilization}} + \frac{\varepsilon}{2 h^2}
   \underbrace{\sum_{T\in\mathcal{T}_h^{n,\Gamma}}\int_{I_n} \int_{T} |u^n_h |^2 \dx
   \dt}_{\text{ cf.  Lemma } \ref{LemInverse}} \right]
 \notag \\
 & - \sum_{n = 1}^N \left[ \frac{h}{2 \varepsilon}  \underbrace{
  \sum_{F\in\mathcal{F}_h^{n, \Gamma}} \int_{I_n} \int_{F} [\![\partial_\nu u^n_h]\!]^2 \dsigma \dt}_{\text{ part of stabilization}} +
   \frac{\varepsilon}{2 h} \underbrace{ \sum_{F\in\mathcal{F}_h^{n, \Gamma}}\int_{I_n}   \int_{F}|u^n_h |^2 \dsigma \dt}_{\text{cf. Lemma }
   \ref{LemInverse}}  \right]
  \notag \\
    &+ \sum_{n = 2}^N \left[ \frac{1}{2}  \int_{\Omega^n_h \cap B^{n - 1} (t_{n - 1})} \varphi^2 (t_{n - 1}) | \tilde{u}_h^n |^2 \dx \right.
   \left. - \frac{1}{2}  \underbrace{\int_{\Omega^n_h \cap B^{n - 1}  (t_{n - 1})} \varphi^2 (t_{n - 1}) | \tilde{u}_h^{n - 1} |^2 \dx }_{\text{cf. Lemma } \ref{LemInverseT}} \right].
\notag
\end{align}

This bound contains several negative terms. Some of them are part of
stabilization which should be yet added to the expression above, and
compensate them directly. The others will be treated, as indicated, by
upcoming Lemmas \ref{LemInverse} and \ref{LemInverseT}, bounding them by a sum
of the positive terms, present in the scheme and the stabilization. Note also
that the first underbraced term, although positive, controls the $H^1$ norm
only on $\Omega (t)$, while we shall need it on the larger domains
$\Omega^n_h$. This term will be treated by Lemma \ref{LemMag1}, as indicated.

We now turn to the proofs of the Lemmas announced above. We begin with the Hardy inequality, which is the basic tool behind Lemmas \ref{LemInverse}, \ref{LemInverseT}.  Its generalization to higher orders is also crucial for the interpolation theory of the next section. 

\begin{lemma}\label{LemHardy}
{Under Assumptions \ref{asm0}--\ref{asm1}, there exists $C_H>0$ such that, for all $n=1,\ldots,N$, all $t\in I_n$ and all $v\in H^1(\Omega_h^n)$ vanishing on $\Gamma(t)$,}
\begin{equation}\label{Hardy}
	{\left\| \frac{v}{\varphi(t,\cdot)} \right\|_{\Omega_h^n} \leqslant C_H |v|_{1,\Omega_h^n}  .}
\end{equation}
\end{lemma}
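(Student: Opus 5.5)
We localize with the cover of Assumption \ref{asm0} and reduce to a one-dimensional Hardy inequality in the normal coordinate $\xi_d=\varphi(t,\cdot)$. Fix $n$, $t\in I_n$ and $v\in H^1(\Omega_h^n)$ with $v=0$ on $\Gamma(t)$. We split $\Omega_h^n$ into the part inside $\cup_i\mathcal{U}_i$ and the remainder. On the remainder, Assumption \ref{asm0}(iii) gives $|\varphi|\ge m$, so that $\|v/\varphi\|\le m^{-1}\|v\|_{\Omega_h^n}$. It therefore suffices to bound $\|v\|_{\Omega_h^n}$ by $|v|_{1,\Omega_h^n}$. This is a Poincaré–Friedrichs inequality for functions vanishing on $\Gamma(t)$, and its constant depends only on $\mathrm{diam}(\mathcal{O})$ and on the geometry controlled by Assumption \ref{asm0}. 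To prove it with a uniform constant, we will integrate along the $\xi_d$ lines from $\Gamma(t)$ inside each $\mathcal{U}_i$. For points of $\Omega(t)$ far from $\Gamma(t)$, we use the standard Friedrichs inequality on $\Omega(t)$, whose constant is uniform because the geometry is uniformly $C^{k+1}$.

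\textbf{Local estimate.} On each $\mathcal{U}_i\cap\Omega_h^n$ we pass to the coordinates $\xi$, in which $\mathcal{U}_i=\omega_i\times(-\delta,\delta)$ and $\varphi=\xi_d$. Because the Jacobians are bounded by $C_\xi$ in both directions, the $L^2$ and $H^1$-seminorms change only by constants. For almost every $\xi'\in\omega_i$, the function $s\mapsto v(\xi',s)$ is in $H^1$ on the segment $\{s:(\xi',s)\in\Omega_h^n\}$ and vanishes at $s=0$. The classical 1D Hardy inequality $\int_0^a |g(s)/s|^2\,\mathrm{d}s\le 4\int_0^a|g'(s)|^2\,\mathrm{d}s$ for $g(0)=0$ (and its analogue for negative $s$) then gives
\[
\int |v/\xi_d|^2\,\mathrm{d}\xi_d\le 4\int|\partial_{\xi_d}v|^2\,\mathrm{d}\xi_d .
\]
Integrating over $\xi'$ and returning to $x$ yields $\|v/\varphi\|_{\mathcal{U}_i\cap\Omega_h^n}\le C|v|_{1,\mathcal{U}_i\cap\Omega_h^n}$ with $C$ depending only on $C_\xi$.

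\textbf{Main obstacle: the vertical segments.} The delicate step is that the segment $\{s\}$ through a point of $\Omega_h^n\cap\mathcal{U}_i$ may leave $\Omega_h^n$ before reaching $s=0$. Then $g(0)=0$ is not available on the connected piece containing that point. Assumption \ref{asm0}(iv) helps here: $\Omega_h^n\setminus\Omega(t)\subset\Omega_h^{n,\Gamma}\subset\cup\,\mathcal{U}_i$, and this band has width $O(h)$ around $\Gamma(t)$. So for $h\le h_0$, on the outer side the segments of length $O(h)$ stay inside the union of cells cut by $\Gamma$. I will first try to handle a segment that leaves $\Omega_h^n$ by extending $v$ across the exterior of $\Omega_h^n$ with a bounded $H^1$ extension operator on the mesh domain, which only changes constants. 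If that is not clean, I will instead treat the part of $\Omega_h^n\setminus\Omega(t)$ by a cellwise argument: on a cut cell, $|\varphi|\gtrsim$ the distance to $\Gamma$, and the 1D Hardy inequality applies on segments inside the cell that start on $\Gamma(t)$. On the inner side, the segment $(\xi',s)$ with $s\in(-\delta,0)$ lies in $\Omega(t)\subset\Omega_h^n$, so it causes no trouble.

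\textbf{Assembly.} Summing the local bounds over $i$, the overlap bound $N_{\text{over}}$ of Assumption \ref{asm0}(ii) ensures each region is counted at most $N_{\text{over}}$ times. Adding the estimate on the remainder where $|\varphi|\ge m$ gives \eqref{Hardy}. The constant $C_H$ depends only on $C_\xi$, $C_\varphi$, $\delta$, $m$, $N_{\text{over}}$ and $\mathcal{O}$, and so is independent of $t$, $n$ and $h$.
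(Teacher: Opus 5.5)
Your proposal follows essentially the same route as the paper. The shared ingredients are: local coordinates from Assumption \ref{asm0} with a one-dimensional Hardy inequality in $\xi_d=\varphi(t,\cdot)$; summation via $N_{\text{over}}$; the bound $|\varphi|\geqslant m$ plus Poincaré--Friedrichs on $\Omega(t)$ away from $\mathcal{U}=\cup_i\mathcal{U}_i$; and an $H^1$ extension of $v$ to $\mathcal{U}\cup\Omega_h^n$ to handle the $\xi_d$-segments that leave $\Omega_h^n$. One small difference: the paper uses Assumption \ref{asm0}(iv) to get $\Omega_h^n\setminus\mathcal{U}\subset\Omega(t)$, which avoids the global Poincaré inequality on $\Omega_h^n$ that you set out to prove. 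That inequality is only obtained afterwards, as Corollary \ref{LemPoin}.

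Two cautions. Drop the cellwise fallback. It fails for cells of $\Omega_h^n\setminus\Omega(t)$ that are not cut by $\Gamma(t)$ at the given $t$; such cells lie in $\mathcal{T}_h^n$ only because they meet some $\Omega(s)$ with $s\in I_n$. It also fails for cells cut only in a tiny corner. Second, a plain bounded $H^1$ extension controls $|Ev|_1$ only by the full norm $\|v\|_{H^1(\Omega_h^n)}$. You must therefore either extend $v$ minus a constant, or absorb the resulting $L^2$ term using $|\varphi|\lesssim h$ on the band for $h\leqslant h_0$. Otherwise your Poincaré step is circular.
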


\begin{proof}
Let $\mathcal{U}=\cup_i \mathcal{U}_i$ be the neighborhood of $\Gamma(t)$ from Assumption \ref{asm0} and $v\in H^1(\mathcal{U}\cup\Omega^n_h)$ be a function vanishing on $\Gamma(t)$. Adopting local coordinates $\xi_1,\ldots,\xi_d$ on each $\mathcal{U}_i$ and remarking that $v$ vanishes at $\xi_d=0$, we can apply the usual Hardy inequality to bound the $L^2(\mathcal{U}_i)$ norm of $v/\xi_d$ by the $H^1(\mathcal{U}_i)$ semi-norm of $v$. We then go back to original coordinates, recall that $\xi_d=\varphi(t,\cdot)$, and sum over the domains $\mathcal{U}_i$:
$$
\left\| \frac{v}{\varphi(t,\cdot)} \right\|_{0,\mathcal{U}}^2
\le \, \sum_{i=1}^{N_{\mathcal{U}}} \left\| \frac{v}{\varphi(t,\cdot)} \right\|_{0,\mathcal{U}_i}^2
\lesssim  \, \sum_{i=1}^{N_{\mathcal{U}}} |v|_{1,\mathcal{U}_i}^2
\le  \, N_{\text{over}} |v|_{1,\mathcal{U}}^2
$$
using the assumption that any point $x\in\mathcal{U}$ belongs to at most $N_{\text{over}}$ sets $\mathcal{U}_i$.

On $\Omega_h^n\setminus\mathcal{U}$, one has $|\varphi(t,\cdot)|\geqslant m$, so that $$\left\| \frac{v}{\varphi(t,\cdot)} \right\|_{\Omega_h^n\setminus\mathcal{U}}\leqslant \frac1m\|v\|_{\Omega_h^n\setminus\mathcal{U}}\leqslant \frac{C_P}{m}|v|_{1,\Omega(t)}$$ 
by Poincaré-Friedrichs inequality on $\Omega(t)$ valid for functions vanishing on $\Gamma(t)$,  with a constant $C_P$ depending only on the diameter of   $\Omega(t)$. The last inequality uses that $\Omega_h^n\setminus\mathcal{U}\subset\Omega(t)$, which follows from Assumption \ref{asm0}(iv): a cell of $\mathcal{T}_h^n$ meeting $\Omega_h^n\setminus\mathcal{U}$ cannot belong to $\mathcal{T}_h^{n,\Gamma}\subset\mathcal{U}$ and is therefore contained in $\Omega(t)$. Note that the global bound $\|v\|_{\Omega_h^n}\lesssim|v|_{1,\Omega_h^n}$ cannot be used at this stage, since it is a consequence of the present lemma, cf. Corollary \ref{LemPoin}.
Combining with the estimate on $\mathcal{U}$, we conclude $\|v/\varphi(t,\cdot)\|_{\Omega_h^n}\lesssim |v|_{1,\mathcal{U}\cup\Omega_h^n}$. This entails (\ref{Hardy}) since a function in $H^1(\Omega^n_h)$ can be extended on $\mathcal{U}\cup\Omega^n_h$ with continuous dependence of $H^1$ norms. Note that the constants $N_\text{over}$, $m$, etc are independent of $t$, which makes $C_H$ independent of $t$, $n$ and $h$.
\end{proof}

\begin{corollary}\label{LemPoin}
	{Under Assumptions \ref{asm0}--\ref{asm1}, there exists $C_p>0$ such that, for all $n=1,\ldots,N$, all $t\in I_n$ and all $v\in H^1(\Omega_h^n)$ vanishing on $\Gamma(t)$,}
	\begin{equation}\label{Poin}
			\|v\|_{\Omega_h^{n,\Gamma}} \leqslant C_p h\, |v|_{1,\Omega_h^n}  
			\qquad\text{and}\qquad
			\|v\|_{\Omega_h^n} \leqslant C_p |v|_{1,\Omega_h^n}.
	\end{equation}
\end{corollary}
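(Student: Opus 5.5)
Both inequalities will follow from the Hardy inequality of Lemma \ref{LemHardy} combined with a pointwise bound on $\varphi$. The idea is to write $v=\varphi(t,\cdot)\,\frac{v}{\varphi(t,\cdot)}$ and control the factor $\varphi(t,\cdot)$ in $L^\infty$ on the relevant region:
\[
\|v\|_{D}\leqslant \|\varphi(t,\cdot)\|_{L^\infty(D)}\left\|\frac{v}{\varphi(t,\cdot)}\right\|_{D}\leqslant C_H\,\|\varphi(t,\cdot)\|_{L^\infty(D)}\,|v|_{1,\Omega_h^n},
\]
with $D=\Omega_h^n$ for the second inequality and $D=\Omega_h^{n,\Gamma}\subset\Omega_h^n$ for the first.

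For the global inequality, Assumption \ref{asm1} gives $|\varphi|\leqslant C_\varphi$ on $[0,T]\times\mathcal{O}\supset I_n\times\Omega_h^n$. The estimate above then yields $\|v\|_{\Omega_h^n}\leqslant C_HC_\varphi|v|_{1,\Omega_h^n}$ directly.

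For the local inequality, I need $\|\varphi(t,\cdot)\|_{L^\infty(\Omega_h^{n,\Gamma})}\lesssim h$ uniformly in $t\in I_n$. This is the main step. Take $x\in T\in\mathcal{T}_h^{n,\Gamma}$. By definition there is $s\in I_n$ with $T\not\subset\Omega(s)$. The cell $T$ is connected and, since it lies in $\mathcal{T}_h^n$, it meets $\Omega(s')$ for some $s'\in I_n$. I then argue in two cases.

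\emph{Case 1: $T$ meets both $\Omega(s)$ and its complement.} Then $\varphi(s,\cdot)$ vanishes at some $y\in T$.

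\emph{Case 2: $T$ lies entirely outside $\Omega(s)$.} Then $\varphi(s,\cdot)\geqslant 0$ on $T$, while $\varphi(s',\cdot)<0$ somewhere on $T$. Continuity in time gives a time $s''\in I_n$ and a point $y\in T$ with $\varphi(s'',y)=0$.

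In either case there exist $\bar t\in I_n$ and $y\in T$ with $\varphi(\bar t,y)=0$. The mean value theorem in space and time, together with the bounds of Assumption \ref{asm1}, then gives
\[
|\varphi(t,x)|\leqslant |\varphi(t,x)-\varphi(\bar t,x)|+|\varphi(\bar t,x)-\varphi(\bar t,y)|\leqslant C_\varphi(\tau+h).
\]
Using $\tau\leqslant\tau_0$ only gives $O(1)$. To obtain $O(h)$ I must therefore invoke the standing regime in which $\tau\lesssim h$, i.e.\ the boundary crosses only a bounded number of cell layers per time step. This is the $O(h)$-band property attributed to Assumptions \ref{asm1}--\ref{asm2} in the text after Assumption \ref{asm2}, and I would quote that property rather than re-derive it. With it, $\|\varphi(t,\cdot)\|_{L^\infty(\Omega_h^{n,\Gamma})}\leqslant Ch$, and the first inequality follows with $C_p=\max(C_HC_\varphi,\,C_HC)$.

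The only delicate point is this uniform-in-$t$ bound $|\varphi|\lesssim h$ on $\Omega_h^{n,\Gamma}$, which relies on the slow motion of the boundary. The rest is a one-line consequence of Lemma \ref{LemHardy}, with all constants independent of $t$, $n$ and $h$.
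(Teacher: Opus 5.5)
For the first inequality your argument is the paper's: write $v=\varphi(t,\cdot)\,v/\varphi(t,\cdot)$, bound $\varphi(t,\cdot)$ by $O(h)$ on $\Omega_h^{n,\Gamma}$, and apply Lemma \ref{LemHardy}. The paper likewise quotes the $O(h)$-band property instead of proving it.

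For the second inequality you take a different route. The paper uses $\Omega_h^n\subset\Omega_h^{n,\Gamma}\cup\Omega(t)$, Poincar\'e--Friedrichs on $\Omega(t)$, and the first inequality on the band, so its global bound inherits the dependence on the band property. You instead apply the Hardy inequality on all of $\Omega_h^n$ together with $|\varphi|\leqslant C_\varphi$, which uses only Lemma \ref{LemHardy} and Assumption \ref{asm1}. Your version is shorter, and it is valid under exactly the hypotheses the corollary lists, Assumptions \ref{asm0}--\ref{asm1}. The first inequality, in both proofs, tacitly also needs Assumption \ref{asm2}, as you rightly observed.

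One point in your discussion should be corrected: the band property is not the regime $\tau\lesssim h$. The paper never assumes this; Theorem \ref{thm:error} only requires $ch^2\leqslant\tau\leqslant\tau_0$. The property does not need it either: if $\Gamma$ does not move, $\Omega_h^{n,\Gamma}$ is just the layer of cut cells, whatever $\tau$ is.

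Your bound $|\varphi(t,x)|\leqslant C_\varphi(\tau+h)$ is correct but too crude. It only sees $|\partial_t\varphi|\leqslant C_\varphi$, not the fact that $\Gamma(t)$ is displaced by only $O(h)$ during $I_n$. That displacement bound is what Assumption \ref{asm2} enforces:
\begin{itemize}
\item each cell of $\mathcal{T}_h^{n,\Gamma}$ is joined, through the at most $M_\Pi+1$ cells of its patch, to a cell $T_i\subset\Omega(t)$ for all $t\in I_n$;
\item the intermediate value theorem along a path of length $O(h)$ then gives $\varphi(t,\cdot)\leqslant Ch$ on $\Omega_h^{n,\Gamma}$ for every $t\in I_n$;
\item the matching lower bound follows from this, combined with the non-degeneracy of $\varphi$ near $\Gamma(t)$ from Assumption \ref{asm0}, for $h$ small.
\end{itemize}
So drop the appeal to $\tau\lesssim h$, and quote (or derive) the band property from Assumption \ref{asm2}.
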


\begin{proof}
Indeed, by (\ref{Hardy}) and the fact that $\varphi(t,\cdot)$ is of order $h$ on $\Omega_h^{n,\Gamma}$,
$$
\|v\|_{\Omega_h^{n,\Gamma}}   \le \left\| \varphi(t,\cdot)\frac{v}{\varphi(t,\cdot)} \right\|_{\Omega_h^{n,\Gamma}} \lesssim h\left\| \frac{v}{\varphi(t,\cdot)} \right\|_{\Omega_h^{n,\Gamma}} \leqslant C_H h\, |v|_{1,\Omega_h^n}\,.
$$
The second inequality in (\ref{Poin}) follows by  Poincaré-Friedrichs inequality on $\Omega(t)$ since $\Omega_h^{n}\subset\Omega_h^{n,\Gamma}\cup\Omega(t)$ for any $t\in I_n$.
\end{proof}

\begin{lemma}
  \label{LemInverse}Under Assumptions {\ref{asm0}--\ref{asm2}}, there exists $C>0$ such that, for each $n=1,\ldots,N$, {all $h\leqslant h_0$} and $v_h \in  V_h^n$,
  \begin{equation}
    \frac{1}{h^2}  \sum_{T\in\mathcal{T}_h^{n,\Gamma}}\int_{I_n} \int_{T} | \varphi v_h |^2
    \dx \dt + \frac{1}{h} \sum_{F\in\mathcal{F}_h^{n, \Gamma}}   \int_{I_n} \int_{F} | \varphi v_h |^2 \dsigma \dt \leqslant C \int_{I_n}
    \int_{\Omega^n_h} | \nabla (\varphi v_h) |^2 \dx \dt.
    \label{evident1}
  \end{equation}
\end{lemma}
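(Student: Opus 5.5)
The estimate will be proved pointwise in time: it suffices to show, for every fixed $t\in I_n$, that
\[
\frac{1}{h^2}\sum_{T\in\mathcal{T}_h^{n,\Gamma}}\|\varphi(t,\cdot) v_h\|_{T}^2+\frac1h\sum_{F\in\mathcal{F}_h^{n,\Gamma}}\|\varphi(t,\cdot) v_h\|_{F}^2\leqslant C\,|\varphi(t,\cdot)v_h|_{1,\Omega_h^n}^2 ,
\]
with $C$ independent of $t$, $n$, $h$. Integrating over $I_n$ then gives (\ref{evident1}). For fixed $t$ the function $w=\varphi(t,\cdot)v_h$ belongs to $H^1(\Omega_h^n)$ by Assumption \ref{asm1}. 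It also vanishes on $\Gamma(t)$, so both Lemma \ref{LemHardy} and Corollary \ref{LemPoin} apply to it.

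\emph{Volume term.} By the first inequality of Corollary \ref{LemPoin},
\[
\|w\|_{\Omega_h^{n,\Gamma}}^2\leqslant C_p^2h^2|w|_{1,\Omega_h^n}^2 .
\]
Dividing by $h^2$ controls the first sum directly.

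\emph{Face term.} Each $F\in\mathcal{F}_h^{n,\Gamma}$ is a face of some cell $T\in\mathcal{T}_h^{n,\Gamma}$. I would use the standard scaled trace inequality $\|w\|_F^2\lesssim h^{-1}\|w\|_T^2+h|w|_{1,T}^2$, which holds for any $H^1(T)$ function on a shape-regular cell. I deliberately avoid a discrete inverse inequality, since $w$ is not polynomial: $\varphi$ is only smooth. Summing over faces, with each cell counted a bounded number of times by mesh regularity, gives
\[
\frac1h\sum_{F}\|w\|_F^2\lesssim \frac1{h^2}\|w\|_{\Omega_h^{n,\Gamma}}^2+|w|_{1,\Omega_h^{n,\Gamma}}^2 .
\]
The volume estimate already bounds the right-hand side by $C|w|_{1,\Omega_h^n}^2$.

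\emph{Main obstacle.} The key ingredient is the claim, used inside Corollary \ref{LemPoin}, that $|\varphi(t,\cdot)|\lesssim h$ on $\Omega_h^{n,\Gamma}$ uniformly for $t\in I_n$. This must hold even though the boundary moves during the slab. I would justify it as follows. Take a cell $T\in\mathcal{T}_h^{n,\Gamma}$. Since $T$ belongs to $\mathcal{T}_h^n$ but is not contained in $\Omega(s)$ for some $s\in I_n$, it meets $\Gamma(s)$ at some point. (Connectedness handles this: either $T$ meets $\Omega(s)$ as well, or it meets $\Omega(s')$ for some other $s'$, and continuity in time gives a crossing.) Hence $|\varphi(s,\cdot)|\leqslant C_\varphi h$ on $T$ by the Lipschitz bound in $x$. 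The $C^1$ bound in time then gives, for any $t\in I_n$, $|\varphi(t,\cdot)|\leqslant C_\varphi(h+\tau)$ on $T$. Under the time-step restriction $\tau\lesssim h$, implicit in Assumption \ref{asm2} and in the band property mentioned after the assumptions, this is $O(h)$. If only $\tau\leqslant\tau_0$ were available, I would instead invoke that band property directly, namely that $\mathcal{T}_h^{n,\Gamma}$ stays within distance $O(h)$ of $\Gamma(t)$. Since $\varphi$ behaves like a distance by Assumption \ref{asm0}, this again yields $|\varphi(t,\cdot)|\lesssim h$ there.
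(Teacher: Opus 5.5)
Your proposal is correct and is essentially the paper's proof: for each fixed $t\in I_n$, the cell term follows from the first inequality of Corollary~\ref{LemPoin} applied to $\varphi(t,\cdot)v_h$, the face term follows from the scaled trace inequality $\|w\|_F^2\lesssim h^{-1}\|w\|_T^2+h|w|_{1,T}^2$ on the cells of $\mathcal{T}_h^{n,\Gamma}$, and one then integrates over $I_n$. Your side argument for $|\varphi(t,\cdot)|\lesssim h$ on $\Omega_h^{n,\Gamma}$ needs one caveat: the route through $|\varphi(t,\cdot)|\lesssim h+\tau$ does not close under the theorem's hypothesis $ch^2\leqslant\tau\leqslant\tau_0$, because Assumption~\ref{asm2} only limits how far $\Gamma(t)$ moves during a slab, not $\tau$ itself, so it is your fallback---the $O(h)$-band property, which is exactly what the paper invokes---that actually carries this step.
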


\begin{proof}  {Thanks to Assumption \ref{asm2},} $\Omega_h^{n, \Gamma}$ is a band of  width of order $h$ around $\Gamma (t)$ for every $t\in I_n$. Since $\varphi(t) v_h$ vanishes on
	$\Gamma(t)$, Poincar{\'e} inequality (\ref{Poin}) gives for every $t\in I_n$,
	\[ \| \varphi(t) v_h \|_{0, \Omega_h^{n, \Gamma}} \lesssim h\, | \varphi(t) v_h |_{1,\Omega_h^{n}}. \]
	
  By the standard scaled trace inequality {on each cell $T\in\mathcal{T}_h^{n,\Gamma}$, i.e. $\|w\|^2_{0,\partial T}\lesssim \frac1h\|w\|^2_{F}+h|w|^2_{1,T}$ for $w\in H^1(T)$}, we also have
  \[ \sum_{F\in\mathcal{F}_h^{n, \Gamma}}  \| \varphi(t) v_h \|_{0, F}^2 \lesssim \frac{1}{h} \|
     \varphi(t) v_h \|_{0, \Omega_h^{n, \Gamma}}^2 + h | \varphi(t) v_h |^2_{1,\Omega_h^{n, \Gamma}}. \]
  Integrating these in time on $I_n$ leads to (\ref{evident1}).
\end{proof}

\begin{lemma}
  \label{LemInverseT}Under Assumptions {\ref{asm0}--\ref{asm2}}, there exists $C>0$ such that, for each $n=1,\ldots,N$, {and all $v_h \in  V_h^n$,  $s\in[t_{n-1},t_n]$,}
\begin{equation}
    {\int_{\Omega_h^{n,\Gamma}}} \varphi^2 (s) |v_h |^2 \dx
    \leqslant C {\left(\frac{h^2}{\tau}+\tau\right)}  \int_{I_n} \int_{\Omega^n_h} | \nabla (\varphi v_h) |^2 \dx \dt \,. \label{evident2}
  \end{equation}
\end{lemma}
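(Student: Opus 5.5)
The plan is to compare the function $\varphi(s)v_h$ at the fixed time $s$ with its values at other times $t\in I_n$, and then to apply Corollary~\ref{LemPoin} at each such $t$. For every $t\in I_n$ we write
\[
\varphi(s)v_h=\varphi(t)v_h+\bigl(\varphi(s)-\varphi(t)\bigr)v_h ,
\]
so that
\[
\|\varphi(s)v_h\|^2_{\Omega_h^{n,\Gamma}}\le 2\|\varphi(t)v_h\|^2_{\Omega_h^{n,\Gamma}}+2\|(\varphi(s)-\varphi(t))v_h\|^2_{\Omega_h^{n,\Gamma}} .
\]
We average this inequality over $t\in I_n$, i.e. apply $\frac1\tau\int_{I_n}\dt$. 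The first term is controlled by Corollary~\ref{LemPoin}, since $\varphi(t)v_h$ vanishes on $\Gamma(t)$:
\[
\frac{2}{\tau}\int_{I_n}\|\varphi(t)v_h\|^2_{\Omega_h^{n,\Gamma}}\dt\lesssim\frac{h^2}{\tau}\int_{I_n}|\varphi(t)v_h|^2_{1,\Omega_h^n}\dt .
\]
This gives exactly the $h^2/\tau$ part of \eqref{evident2}.

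For the second term, Assumption~\ref{asm1} gives $|\varphi(s)-\varphi(t)|\le C_\varphi\tau$. We therefore need to bound $\tau^2\|v_h\|^2_{\Omega_h^{n,\Gamma}}$ by $\tau\int_{I_n}|\varphi v_h|^2_{1,\Omega_h^n}\dt$, which amounts to showing
\[
\|v_h\|^2_{\Omega_h^{n,\Gamma}}\lesssim\frac1\tau\int_{I_n}|\varphi(t) v_h|^2_{1,\Omega_h^n}\dt .
\]
To get this, we first fix $t\in I_n$ and apply the Hardy inequality of Lemma~\ref{LemHardy} to $w=\varphi(t)v_h\in H^1(\Omega_h^n)$, which vanishes on $\Gamma(t)$:
\[
\|v_h\|_{\Omega_h^n}=\|w/\varphi(t)\|_{\Omega_h^n}\le C_H|\varphi(t)v_h|_{1,\Omega_h^n} .
\]
Note that $v_h$ is finite-element, so $w\in H^1$ holds because $\varphi$ is $C^{k+1}$ in $x$. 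Squaring and averaging over $t\in I_n$ gives
\[
\|v_h\|^2_{\Omega_h^{n,\Gamma}}\le\|v_h\|^2_{\Omega_h^n}\le \frac{C_H^2}{\tau}\int_{I_n}|\varphi v_h|^2_{1,\Omega_h^n}\dt .
\]
Multiplying by $2C_\varphi^2\tau^2$ yields the $C\tau\int_{I_n}\int_{\Omega_h^n}|\nabla(\varphi v_h)|^2$ contribution. Adding the two pieces gives \eqref{evident2} with $C$ depending only on $C_H$, $C_p$ and $C_\varphi$.

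The delicate point is the applicability of Lemma~\ref{LemHardy} and Corollary~\ref{LemPoin} uniformly for all $t\in I_n$, including the requirement $\Omega_h^n\setminus\mathcal U\subset\Omega(t)$. This is granted by Assumption~\ref{asm0}(iv) together with the stated uniformity of $C_H$ in $t$ and $n$, so no new geometric argument should be needed. If one insisted on staying in $\Omega_h^{n,\Gamma}$ for the second piece, one could also use $|\varphi(t)|\lesssim h$ there, but the global Hardy bound is simpler and suffices.
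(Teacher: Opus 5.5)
Your proof is correct and follows essentially the paper's argument. Both split $\varphi(s)=\varphi(t)+\bigl(\varphi(s)-\varphi(t)\bigr)$ and integrate over $t\in I_n$, bound the first part by the Poincaré estimate of Corollary~\ref{LemPoin} (which is what Lemma~\ref{LemInverse} uses), and bound the second part by $|\partial_t\varphi|\le C_\varphi$ together with the Hardy inequality of Lemma~\ref{LemHardy} at each $t$; the only differences are cosmetic (you use the mean-value bound $|\varphi(s)-\varphi(t)|\le C_\varphi\tau$ instead of the integral representation with Cauchy--Schwarz), and your closing aside about using $|\varphi(t)|\lesssim h$ on $\Omega_h^{n,\Gamma}$ would not actually help, since an upper bound on $|\varphi|$ gives no control of $\|v_h\|$, but you rightly do not rely on it.
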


\begin{proof}
		We start by the equality $\varphi (s, x) = \varphi (t, x) + \int_{t}^s \frac{\partial \varphi}{\partial t} (z, x) \dz$, valid for all $t\in I_n$ and $x \in \Omega_h^{n, \Gamma}$. Taking the square, multiplying by $| v_h |^2 (x)$ and integrating in $x$ and $t$, leads to
		\[ \tau \int_{\Omega_h^{n, \Gamma}} \varphi^2 (s) |v_h |^2 \dx
		\leqslant 2 \int_{I_n} \int_{\Omega^{n, \Gamma}_h} | \varphi v_h |^2 \dx \dt + \tau^2 \int_{I_n} \int_{\Omega^{n, \Gamma}_h} \left|
		\frac{\partial \varphi}{\partial t} v_h \right|^2 \dx \dt \,.\]
		The first integral on the right-hand side above is already bounded by $h^2
		\int_{I_n} \int_{\Omega^n_h} | \nabla (\varphi v_h) |^2 \dx \dt$ thanks to
		Lemma \ref{LemInverse}. To bound the second integral, we recall that $\frac{\partial\varphi}{\partial t}$ is uniformly bounded by Assumption~\ref{asm1} and conclude
		\[ \int_{I_n} \int_{\Omega^{n, \Gamma}_h} \left| \frac{\partial
			\varphi}{\partial t} v_h \right|^2 \dx \dt \lesssim \int_{I_n}
		\int_{\Omega^{n, \Gamma}_h} \left|  \frac{\varphi v_h}{\varphi} \right|^2
		\dx \dt \lesssim \int_{I_n} \int_{\Omega^n_h} | \nabla (\varphi v_h) |^2
		\dx \dt \,.\]
		The latter is valid by the Hardy inequality of Lemma \ref{LemHardy}.
		Putting these estimates together and dividing by $\tau$ gives the announced
		result.
\end{proof}

\begin{lemma}
	\label{LemMag1}
	Under Assumptions~\ref{asm0}--\ref{asm2}, there exist constants $C,h_0>0$, depending only on the regularity of the mesh $\mathcal{T}_h^{\mathcal{O}}$, on $k$ and on the constants of Assumptions~\ref{asm0}--\ref{asm2}, such that for all $h\leqslant h_0$, all $n=1,\ldots,N$ and all $v_h \in V_h^n$,
	\begin{multline*}
		\int_{I_n} \int_{\Omega_h^n} | \nabla (\varphi v_h) |^2 \, \dx \, \dt
		\leq C \Bigg(
		\int_{I_n} \int_{\Omega(t)} | \nabla (\varphi v_h) |^2 \, \dx \, \dt \\
		+ h^2 \sum_{T\in\mathcal{T}_h^{n,\Gamma}}\int_{I_n} \int_{T}
		\left| \frac{\partial \varphi}{\partial t} v_h - \Delta (\varphi v_h) \right|^2 \, \dx \, \dt
		+ h \sum_{F\in\mathcal{F}_h^{n, \Gamma}} \int_{I_n}  \int_{F}	 [\![\partial_\nu (\varphi v_h)]\!] ^2 \, \dsigma \, \dt
		\Bigg).
	\end{multline*}
\end{lemma}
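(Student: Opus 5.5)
The plan is to adapt the patch-based argument of the stationary $\varphi$-FEM analysis \cite{duprez2020phi}, working at each fixed $t\in I_n$ and integrating in time at the end. Fix $t\in I_n$ and write $w_h=\varphi(t)v_h$. Since $\Omega_h^n\setminus\Omega(t)\subset\Omega_h^{n,\Gamma}$, and the patches of Assumption~\ref{asm2} cover $\mathcal{T}_h^{n,\Gamma}$, it suffices to prove, for every patch $\Pi_i=\{T_i\}\cup\Pi_i^\Gamma$,
\[
|w_h|_{1,\Pi_i}^2\lesssim |w_h|_{1,T_i}^2+h^2\sum_{T\in\Pi_i^\Gamma}\Big\|\tfrac{\partial\varphi}{\partial t}v_h-\Delta w_h\Big\|_{T}^2+h\sum_{F\in\mathcal{F}_i}\|[\![\partial_\nu w_h]\!]\|_F^2 + (\text{small})\,|w_h|_{1,\Pi_i}^2 ,
\]
where $\mathcal{F}_i$ are the interior faces of $\Pi_i$. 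Here $T_i\subset\Omega(t)$ for all $t\in I_n$ and the patches are disjoint. Summing over $i$, adding $|w_h|^2_{1,\Omega(t)}$, absorbing the small term for $h\le h_0$, and integrating over $I_n$ then gives the lemma. The faces in $\mathcal{F}_i$ belong to $\mathcal{F}_h^{n,\Gamma}$, because they are faces of cells of $\mathcal{T}_h^{n,\Gamma}$.

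The core of the argument is a polynomial-approximation step on the patch. By Assumption~\ref{asm1}, on $\Pi_i$ (of diameter $O(h)$) we can replace $\varphi(t)$ by its Taylor polynomial $\varphi_i$ of degree $k$ at a point of $T_i$, with $\|\varphi-\varphi_i\|_{W^{s,\infty}(\Pi_i)}\lesssim h^{k+1-s}$. Then $\varphi_i v_h$ is a piecewise polynomial of degree $2k$ on $\Pi_i$, and the finite-dimensional space of such functions (on a reference patch, with finitely many configurations by mesh regularity and $M_\Pi$) admits the norm-equivalence
\[
|p|_{1,\Pi_i}^2\lesssim |p|_{1,T_i}^2+h^2\sum_{T\in\Pi_i^\Gamma}\|\Delta p\|_T^2+h\sum_{F\in\mathcal{F}_i}\|[\![\partial_\nu p]\!]\|_F^2+\sum_{F\in\mathcal{F}_i}h^{-1}\|[\![p]\!]\|_F^2 .
\]
This holds by the standard compactness/scaling argument: if the right-hand side vanishes, $p$ is $C^1$ across the faces with piecewise-harmonic pieces, and the polynomial on $T_i$ extends as a single polynomial through the connected patch. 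The jump $[\![p]\!]$ actually vanishes, since $v_h$ and $\varphi_i$ are continuous. Care is needed because harmonicity alone does not give uniqueness of the extension. Following \cite{duprez2020phi}, continuity of $p$ and $\partial_\nu p$ across a face already forces the two neighbouring polynomials to coincide up to a term of the form $\ell_F^2 q$, where $\ell_F$ is the affine function vanishing on $F$. Propagating cell by cell from $T_i$ therefore requires controlling these $\ell_F^2q$ corrections by $\Delta p$, which is the delicate point. I will assume, as in the stationary paper, that the norm-equivalence holds on the patch, ideally citing the corresponding lemma there.

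It remains to go back from $\varphi_i v_h$ to $w_h$ and to handle the $\frac{\partial\varphi}{\partial t}v_h$ term. The differences $|(\varphi-\varphi_i)v_h|_{1,\Pi_i}$, $h\|\Delta((\varphi-\varphi_i)v_h)\|_T$ and $h^{1/2}\|[\![\partial_\nu((\varphi-\varphi_i)v_h)]\!]\|_F$ are bounded, via inverse inequalities, by $Ch\|v_h\|_{\Pi_i}+C\|(\varphi-\varphi_i)\nabla v_h\|$. These are lower order, at most $Ch\,\|v_h\|_{\Pi_i}$. Moreover,
\[
h^2\Big\|\tfrac{\partial\varphi}{\partial t}v_h\Big\|_T^2\lesssim h^2\|v_h\|^2_T .
\]
Summing over patches, all of these remainders are controlled by $h^2\|v_h\|_{\Omega_h^{n,\Gamma}}^2=h^2\|w_h/\varphi\|^2\lesssim h^2|w_h|^2_{1,\Omega_h^n}$ through the Hardy inequality of Lemma~\ref{LemHardy}. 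For $h\le h_0$ they can then be absorbed into the left-hand side.

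The main obstacle is the norm-equivalence on the patch for piecewise polynomials of the product form $\varphi_i v_h$, specifically its uniqueness part. I would first try to reduce it to the stationary result of \cite{duprez2020phi}, noting that the only new feature is that $\mathcal{T}_h^{n,\Gamma}$ is a band swept over $I_n$. That band still has width $O(h)$ when $\tau\lesssim h$, and Assumption~\ref{asm2} is posed directly on it, so the time-independent patch geometry makes the stationary argument applicable pointwise in $t$.
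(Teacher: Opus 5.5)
Your argument is sound, but it takes a different route from the paper.

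\textbf{The paper's route.} The paper never replaces $\varphi$ by a polynomial. On each patch it proves, by contradiction and compactness, an inequality for \emph{every} cellwise-$H^2$ function $w\in H^1(\Pi_i)$. The price is an extra term $\beta h^2\sum_{T\in\Pi_i}|w|_{2,T}^2$, which is what supplies compactness in this infinite-dimensional setting. It then takes $w=\varphi v_h$ and removes the $\beta$-term via the Leibniz rule, inverse inequalities and Hardy, $h^2\sum_T|\varphi v_h|_{2,T}^2\le C_1\|v_h\|_{\Omega_h^n}^2\le C_1C_H^2|w|_{1,\Omega_h^n}^2$, with $\beta$ fixed small.

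\textbf{Your route.} Your Taylor freezing $\varphi\approx\varphi_i$ makes $\varphi_i v_h$ a continuous piecewise polynomial of degree $2k$ on the patch. A genuine norm equivalence therefore holds without any $\beta$-term. Your remainders are $O(h^k\|v_h\|)$, including the jump term, which reduces to $(\varphi-\varphi_i)[\![\partial_\nu v_h]\!]$. After summation they are absorbed by the same global Hardy bound that both proofs use for the $\frac{\partial\varphi}{\partial t}v_h$ term. Your approach gives a direct reduction to a finite-dimensional stationary lemma; the paper's avoids the patchwise polynomial surgery.

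\textbf{The step you defer.} The step you flag as delicate and leave to a citation is easy, and it is exactly the heart of the paper's proof. Suppose your right-hand side vanishes. Then $p$ is continuous, has no normal-derivative jumps, and is harmonic on every cell. Hence it is weakly harmonic, and thus real-analytic, on the interior of $\Pi_i$, which is connected by Assumption~\ref{asm2}. Being constant on $T_i$, it is constant, so harmonicity \emph{does} give uniqueness of the extension.

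In your polynomial language, the difference $\ell_F^2q$ between neighbouring pieces is then harmonic. Write $\ell_F=x_d$ and $q=\sum_j x_d^j q_j(x')$. The equation $\Delta(x_d^2q)=0$ yields $(m+2)(m+1)q_m=-\Delta' q_{m-2}$, which forces $q=0$.

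\textbf{Uniformity of the constant.} The set of admissible patch configurations is compact rather than finite. Uniformity of the constant therefore requires a limit-patch extraction, as in the paper.
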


\begin{proof}
	Take any $n\in\{1,\ldots,N\}$, and recall that $\mathcal{T}_h^{n,\Gamma}$ is covered by the patches $\{\Pi_i\}$ as in Assumption~\ref{asm2}, cf. Fig.~\ref{FigPatch}. We claim that the following bound holds on any patch $\Pi_i$: for any $\beta>0$, there exists $C(\beta)>0$, 	depending only on $\beta$, on $M_\Pi$ and on the regularity of the mesh 	$\mathcal{T}_h^{\mathcal{O}}$, such that for every	$w\in H^1(\Pi_i)$ with $w|_T\in H^2(T)$ for all $T\in\Pi_i$,
	\begin{equation}\label{Dir:beta}
		|w|_{1,\Pi_i^{\Gamma}}^2 \leqslant C(\beta)\left(
		|w|_{1,T_{i}}^2
		+ h\,\sum_{F\in\mathcal{F}_i}\big\| [\![ \partial_\nu w ]\!] \big\|_{F}^2
		+ h^2\,\sum_{T\in\Pi_i^{\Gamma}}\|\Delta w\|_{F}^2 \right)
		+ \beta h^2 \,\sum_{T\in\Pi_i}|w|_{2,T}^2 
	\end{equation}
with $\mathcal{F}_i$ standing for the set of mesh faces interior to $\Pi_i$. Note that this bound is invariant under the change of coordinates $x\mapsto hx+c$ (with any constant vector $c$) and under the multiplication of $w$ by any non-negative real number, as well as adding any scalar constant to it. Now, let us fix any $\beta>0$ and suppose, by contradiction, that no such $C(\beta)$ exists.
Then, for every $j\in\mathbb{N}^*$, there are a patch $\Pi_j$ (that verifies the properties of Assumption \ref{asm2},  is of size 1 ($h=1$), and centered at the origin) and a function $w_j$ normalized by $|w_j|_{1,\Pi_j^\Gamma}=1$ and $\int_{\Pi_j}w_j=0$, such that
\begin{equation}\label{Dir:contra}
	j\left(|w_j|_{1,T_{j}}^2
	+ \sum_{F\in\mathcal{F}_j}\big\| [\![ \partial_\nu w_j ]\!]\big\|_{F}^2
	+ \sum_{T\in\Pi_j^{\Gamma}}\|\Delta w_j\|_{F}^2 \right)
	+ \beta\,\sum_{T\in\Pi_j}|w_j|_{2,T}^2 \ <\ 1 .
\end{equation}
This gives at once: $\sum_{T\in\Pi_j}|w_j|_{2,T}^2\leqslant \beta^{-1}$ for all $j$, and $|w_j|_{1,T_{j}}^2
+ \sum_{F\in\mathcal{F}_j}\big\| [\![ \partial_\nu w_j ]\!]\big\|_{F}^2
+ \sum_{T\in\Pi_j^{\Gamma}}\|\Delta w_j\|_{F}^2 \longrightarrow 0$ as $j\to\infty$, meaning that $(w_j)$ is bounded in $H^2(T)$ on every cell $T\subset\Pi_j$. Extracting a subsequence, we may assume
that the patches $(\Pi_j$) converge to a limit patch $\Pi=\Pi^\Gamma\cup T_\Pi$ (of size 1 and centered at the origin) with interior facets $\mathcal{F}_\Pi$, and that $w_j\to w$ strongly in $H^1(\Pi)$ (by the Rellich theorem applied on every cell) and $w_j\rightharpoonup w$ weakly in $H^2(T)$ for every  $T\subset\Pi$.
Passing to the limit in \eqref{Dir:contra}:
\begin{itemize}
	\item $|w|_{1,\Pi^{\Gamma}}=1$, and $|w|_{1,T_{\Pi}}=0$, i.e. $w=\textrm{const}$ on $T_{\Pi}$, by the strong $H^1$ convergence;
	\item $\Delta w=0$ on each cell of $\Pi$, by the weak $L^2$ convergence	of $\Delta w_j$;
	\item $[\![\partial_\nu w]\!]=0$ on $\mathcal{F}_{\Pi}$ since the map	$v\mapsto\partial_\nu v|_F$ being compact from $H^2(T)$ into $L^2(F)$, is weakly continuous.
\end{itemize}
This implies that $w$ is harmonic in the interior of $\Pi$. Indeed, $w\in H^1(\Pi)$ is cell-wise $H^2$, with vanishing cell-wise Laplacian and vanishing normal jumps, thus $w$ satisfies the mean value property on every ball in $\Pi$ and, being continuous, it is therefore harmonic, hence real-analytic in the interior of $\Pi$ (see e.g. Chapter~1 of \cite{Axler2001}). Since 
$w$ is constant on $T_{\Pi}$ and the interior of $\Pi$ is connected, the identity theorem for real-analytic functions (same reference) gives that $w$ is constant on $\Pi$,  contradicting
$|w|_{1,\Pi^\Gamma}=1$. This proves (\ref{Dir:beta}).

Now, take any $t\in I_n$ and $v_h\in V_h^n$, write
$\varphi$ for $\varphi(t,\cdot)$, apply (\ref{Dir:beta}) to $w=\varphi v_h$ on every patch, and sum over patches. This gives
\begin{multline}\label{Dir:sum}
|\varphi v_h|_{1,\Omega_h^{n,\Gamma}}^2 \leqslant C(\beta)\left(
|\varphi v_h|_{1,\Omega(t)}^2
+ h\sum_{F\in\mathcal{F}_h^{n,\Gamma}}
\big\| [\![ \partial_\nu (\varphi v_h) ]\!]\big\|_{F}^2
+ h^2\sum_{T\in\mathcal{T}_h^{n,\Gamma}}\|\Delta (\varphi v_h)\|_{F}^2 \right)\\
+ \beta h^2 \sum_{T\in\mathcal{T}_h^{n,\Gamma,\textrm{ext}}} |\varphi v_h|_{2,T}^2 ,
\end{multline}
since the disjoint sets $\Pi_i^\Gamma$ cover $\mathcal{T}_h^{n,\Gamma}$ and $T_i$ are distinct cells of
$\mathcal{T}_h^n\setminus\mathcal{T}_h^{n,\Gamma}$, hence contained in $\Omega(t)$. We have denoted here $\mathcal{T}_h^{n,\Gamma,\textrm{ext}}=\mathcal{T}_h^{n,\Gamma} \cup T_1 \cup \cdots \cup T_{N_\Pi}$.

	Under Assumption~\ref{asm1}, there exists $C_1>0$ such that
	\begin{equation}\label{Dir:brokenH2}
		h^2\sum_{T\in\mathcal{T}_h^{n,\Gamma,\textrm{ext}}}|\varphi v_h|_{2,T}^2 \ \leqslant\ C_1\|v_h\|_{\Omega^n_h}^2 \,.
	\end{equation}
	Indeed, the Leibniz rule gives on any cell $T\in\mathcal{T}_h^{n,\Gamma,\textrm{ext}}$
	\[
	|\varphi v_h|_{2,T} \lesssim \|\varphi\|_{L^\infty(T)} |v_h|_{2,T} + |\varphi|_{W^{1,\infty}(T)} |v_h|_{1,T}
	+  |\varphi|_{W^{2,\infty}(T)} \|v_h\|_{F}
	\lesssim h |v_h|_{2,T} + |v_h|_{1,T} + \|v_h\|_{F}
	\]
	since the first and second derivatives of $\varphi$ are uniformly bounded by Assumption~\ref{asm1} and $T$ is at a distance $\sim h$ of $\partial\Omega(t)$. It remains to apply the inverse inequalities to the polynomial $v_h$ on $T$, to multiply by $h$, to square and to sum over the cells $T$, in order to prove (\ref{Dir:brokenH2}).
This allows us  to dispose of the last term of \eqref{Dir:sum}. By
\eqref{Dir:brokenH2}, and then by the Hardy inequality \eqref{Hardy} applied to $w$, which vanishes on $\Gamma(t)$,
\begin{equation}\label{Dir:betaabs}
	h^2\sum_{T\in\mathcal{T}_h^{n,\Gamma,\textrm{ext}}}|\varphi v_h|_{2,T}^2 \leqslant C_1\|v_h\|_{\Omega_h^n}^2
	= C_1\left\|\frac{w}{\varphi}\right\|_{\Omega_h^n}^2
	\leqslant C_1C_H^2\,|w|_{1,\Omega_h^n}^2 .
\end{equation}
The constants $C_1$ and $C_H$ being independent of $\beta$, we may now fix $\beta$ so that $\beta C_1C_H^2\leqslant\frac14$; the last
term of \eqref{Dir:sum} is then absorbed by its left hand side. 

It remains to make $\frac{\partial\varphi}{\partial t}v_h-\Delta w$ appear in the right hand side of (\ref{Dir:sum}). Writing $\Delta w=\frac{\partial\varphi}{\partial t}
v_h-\big(\frac{\partial\varphi}{\partial t}v_h-\Delta w\big)$ and recalling
that $\big|\frac{\partial\varphi}{\partial t}\big|\leqslant C_\varphi$ by
Assumption~\ref{asm1},
\[
h^2\sum_{T\in\mathcal{T}_h^{n,\Gamma}}\|\Delta (\varphi v_h)\|_{F}^2
\leqslant 2h^2\sum_{T\in\mathcal{T}_h^{n,\Gamma}}
\left\|\frac{\partial\varphi}{\partial t}v_h-\Delta (\varphi v_h)\right\|_{F}^2
+ 2C_\varphi^2h^2\|v_h\|_{\Omega_h^{n,\Gamma}}^2 .
\]
The last term above can be further bound $2C_\varphi^2C_H^2h^2|\varphi v_h|_{1,\Omega_h^n}^2$, using  the Hardy inequality as in \eqref{Dir:betaabs}. Its contribution to the
right hand side of \eqref{Dir:sum} is therefore absorbed by the left hand
side as soon as $2C(\beta)C_\varphi^2C_H^2h^2\leqslant\frac14$, i.e. for all
$h\leqslant h_0$, with $h_0$ depending on the already fixed $\beta$ through $C(\beta)$. We are left with
\[
|\varphi v_h|_{1,\Omega_h^{n}}^2 \leqslant C\left(
|\varphi v_h|_{1,\Omega(t)}^2
+ h\sum_{F\in\mathcal{F}_h^{n,\Gamma}}
\big\| [\![ \partial_\nu (\varphi v_h) ]\!]\big\|_{F}^2
+ h^2\sum_{T\in\mathcal{T}_h^{n,\Gamma}}
\left\|\frac{\partial\varphi}{\partial t}v_h
-\Delta(\varphi v_h)\right\|_{T}^2 \right) ,
\]
with $C=2C(\beta)$. Integrating this bound over $t\in I_n$ concludes the proof.
\end{proof}

With the help of the Lemmas above, (\ref{BIGbound}) leads now to the ``parabolic coercivity'' estimate assuming $\frac{h^2}{\tau}+\tau$ sufficiently small.

\begin{proposition}\label{coerc}
Assuming $\gamma$ big enough, $h\leqslant h_0$ and $\frac{h^2}{\tau}+\tau$ small  enough, it holds for any $U_h\in\mathbb{V}_h$
  \begin{multline*}
       a_h (U_h, U_h) \geqslant \frac{1}{2}  \int_{\Omega (t_N)} \varphi^2 (t_N) |
     \tilde{u}_h^N |^2 \dx + \frac{1}{2}  \int_{\Omega_h^0} \varphi^2 (t_0)
     | \tilde{u}_h^1 |^2 \dx \\
   + c \sum_{n = 1}^N \int_{I_n} \int_{\Omega^n_h} | \nabla u^n_h |^2 \dx \dt + \frac{1}{2} \sum_{n = 2}^N  \int_{\Omega (t_{n - 1})} \varphi^2
     (t_{n - 1})  | \tilde{u}_h^n - \tilde{u}_h^{n - 1} |^2 \dx \\
   + \frac{1}{2} \sum_{n = 1}^N \left[ \ \gamma h
   \sum_{F\in\mathcal{F}_h^{n, \Gamma}}  \int_{I_n}\int_{F}[\![\partial_\nu u^n_h]\!]^2 \dsigma \dt + \gamma h^2 \sum_{T\in\mathcal{T}_h^{n,\Gamma}} \int_{I_n} \int_{T}
     \left| \frac{\partial \varphi}{\partial t}  \tilde{u}^n_h - \Delta u^n_h
     \right|^2 \dx \dt \right]
  \end{multline*}
  with a constant $c > 0$.
\end{proposition}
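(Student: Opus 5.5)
We start from the decomposition \eqref{3termsIOS} and the lower bound \eqref{BIGbound}, and add $a_h^{\text{stab}}(U_h,U_h)$ to both sides. The positive terms $\frac12\int_{\Omega(t_N)}\varphi^2(t_N)|\tilde u_h^N|^2$, $\frac12\int_{\Omega_h^0}\varphi^2(t_0)|\tilde u_h^1|^2$ and $\frac12\sum_{n\ge2}\int_{\Omega(t_{n-1})}\varphi^2(t_{n-1})|\tilde u_h^n-\tilde u_h^{n-1}|^2$ are kept unchanged. We also drop the nonnegative term $\frac12\int_{\Omega_h^n\cap B^{n-1}(t_{n-1})}\varphi^2(t_{n-1})|\tilde u_h^n|^2$. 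Every other negative term must then be absorbed.

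First, the two negative terms containing stabilization quantities come with factors $\frac{h^2}{2\varepsilon}$ and $\frac{h}{2\varepsilon}$. We fix $\varepsilon$ first and then take $\gamma\ge 1/\varepsilon$. These terms then consume at most half of $a_h^{\text{stab}}$, which leaves the $\frac12\gamma(\dots)$ stabilization terms announced in the statement, plus another quarter in reserve. The reserve is needed for Lemma~\ref{LemMag1}.

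Second, the two negative terms $\frac{\varepsilon}{2h^2}\int\!\!\int|u_h^n|^2$ and $\frac{\varepsilon}{2h}\int\!\!\int_F|u_h^n|^2$ are bounded by Lemma~\ref{LemInverse}. Together they are at most $\frac{\varepsilon C}{2}\int_{I_n}\int_{\Omega_h^n}|\nabla u_h^n|^2$.

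Third, the remaining negative term is $-\frac12\int_{\Omega_h^n\cap B^{n-1}(t_{n-1})}\varphi^2(t_{n-1})|\tilde u_h^{n-1}|^2$. The set $\Omega_h^{n-1}\setminus\Omega(t_{n-1})$ lies in $\Omega_h^{n-1,\Gamma}$ and $t_{n-1}\in I_{n-1}$, so Lemma~\ref{LemInverseT} applies on slab $n-1$ with $s=t_{n-1}$. It bounds this term by $C(\frac{h^2}{\tau}+\tau)\int_{I_{n-1}}\int_{\Omega_h^{n-1}}|\nabla u_h^{n-1}|^2$. The index shift is harmless after summation over $n$.

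Fourth, Lemma~\ref{LemMag1} converts the positive term $\sum_n\int_{I_n}\int_{\Omega(t)}|\nabla u_h^n|^2$ into control on the full $\Omega_h^n$. It gives
\[
\int_{I_n}\!\int_{\Omega(t)}|\nabla u_h^n|^2\ \ge\ \frac1{C_M}\int_{I_n}\!\int_{\Omega_h^n}|\nabla u_h^n|^2 - h^2\!\!\sum_{T}\int_{I_n}\!\!\int_T\Big|\tfrac{\partial\varphi}{\partial t}\tilde u_h^n-\Delta u_h^n\Big|^2 - h\!\!\sum_F\int_{I_n}\!\!\int_F[\![\partial_\nu u_h^n]\!]^2 .
\]
The two subtracted stabilization-type terms are absorbed by the reserved quarter of $a_h^{\text{stab}}$, provided $\gamma/4\ge1$.

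Collecting everything, the coefficient of $\sum_n\int_{I_n}\int_{\Omega_h^n}|\nabla u_h^n|^2$ is at least
\[
\frac1{C_M}-\frac{\varepsilon C}{2}-C\Big(\frac{h^2}{\tau}+\tau\Big).
\]
We choose $\varepsilon$ so that $\frac{\varepsilon C}{2}\le\frac1{4C_M}$, then require $C(\frac{h^2}{\tau}+\tau)\le\frac1{4C_M}$. This gives $c=\frac1{2C_M}$. Finally we take $\gamma\ge\max(4,2/\varepsilon)$.

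The main obstacle is the ordering of constants. $C_M$ from Lemma~\ref{LemMag1} depends neither on $\gamma$ nor on $\varepsilon$, and the constants of Lemmas~\ref{LemInverse}--\ref{LemInverseT} are likewise independent of them. This is what makes the sequential choice of $\varepsilon$, then $\gamma$, then the smallness of $\frac{h^2}{\tau}+\tau$ consistent.
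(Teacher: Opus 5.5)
Your proof is correct and is essentially the paper's argument: the same lemmas (including Lemma~\ref{LemInverseT} at rank $n-1$ with $s=t_{n-1}$ and the index shift), and the same order of choices ($\varepsilon$, then the smallness of $\frac{h^2}{\tau}+\tau$, then $\gamma$). The differences are minor. You apply Lemma~\ref{LemMag1} once, directly to the positive term $\int_{I_n}\int_{\Omega(t)}|\nabla u_h^n|^2$, whereas the paper first uses it to bound the negative $G_n, G_{n-1}$ terms and then again at the end. Also, your intermediate claim that $\gamma\ge 1/\varepsilon$ leaves a quarter of $a_h^{\text{stab}}$ in reserve is off (you need $\gamma\ge 2/\varepsilon$ for that), but your final choice $\gamma\ge\max(4,2/\varepsilon)$ makes the accounting consistent.
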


\begin{proof}
  Recall from (\ref{3termsIOS}) that $a_h (U_h, U_h)$ is the sum of three
  contributions ``in'', ``out'', and ``stab'', and that the sum of the contributions ``in''  and ``out'' is bounded from below in (\ref{BIGbound}).
  Let us make precise the order in which the preceding lemmas are applied. 
  Set, for brevity,
  \[ G_n^{\mathrm{in}} = \int_{I_n} \int_{\Omega (t)} | \nabla u^n_h |^2 \dx \dt , \qquad
     G_n = \int_{I_n} \int_{\Omega^n_h} | \nabla u^n_h |^2 \dx \dt , \]
  \[ S_n = h \sum_{F\in\mathcal{F}_h^{n, \Gamma}}\int_{I_n} \int_{F} [\![\partial_\nu u^n_h]\!]^2 \dsigma \dt
     + h^2 \sum_{T\in\mathcal{T}_h^{n,\Gamma}}\int_{I_n} \int_{T} \left| \frac{\partial \varphi}{\partial t}  \tilde{u}_h^n - \Delta u^n_h \right|^2 \dx \dt , \]
  so that $a_h^{\mathrm{stab}}(U_h,U_h)=\gamma\sum_n S_n$ and the two ``parts of stabilization'' underbraced in (\ref{BIGbound}) sum up to $\frac{1}{2\varepsilon}S_n$.
  Lemma \ref{LemInverse} bounds the two underbraced terms of (\ref{BIGbound}) carrying the factor $\varepsilon$ by $c_0\varepsilon\, G_n$. Lemma \ref{LemInverseT}, applied at rank $n-1$ with $s=t_{n-1}$ and $v_h=\tilde u_h^{n-1}$, bounds the last underbraced term by $c_2\left(\frac{h^2}{\tau}+\tau\right) G_{n-1}$; here we have used the inclusion
  \[ \Omega^n_h \cap B^{n - 1} (t_{n - 1}) = \Omega_h^n\cap\big(\Omega_h^{n-1}\setminus\Omega(t_{n-1})\big)
  \subset \Omega_h^{n-1}\setminus\Omega(t_{n-1})
  \subset \Omega_h^{n-1,\Gamma} \,.\]
Lemma \ref{LemMag1} then converts these bounds into
  \[ c_0\varepsilon\, G_n + c_2\left(\frac{h^2}{\tau}+\tau\right) G_{n-1} \leqslant c_1\varepsilon\left(G_n^{\mathrm{in}} + S_n\right) + c_1\left(\frac{h^2}{\tau}+\tau\right)\left(G_{n-1}^{\mathrm{in}} + S_{n-1}\right) . \]
  Reporting this into (\ref{BIGbound}), summing in $n$ and adding the stabilization $a_h^{\mathrm{stab}}(U_h,U_h)$, we arrive at
  \begin{multline*} a_h (U_h, U_h) \geqslant \frac{1}{2}  \int_{\Omega (t_N)} \varphi^2 (t_N) |
	\tilde{u}_h^N |^2 \dx + \frac{1}{2}  \int_{\Omega_h^0} \varphi^2 (t_0) |
	\tilde{u}_h^1 |^2 \dx \\
	+ \sum_{n = 1}^N \left( 1-c_1 \varepsilon - c_1 \left(\frac{h^2}{\tau}+\tau\right) \right) G_n^{\mathrm{in}} +
	\sum_{n = 2}^N \frac{1}{2}  \int_{\Omega (t_{n - 1})} \varphi^2 (t_{n - 1})  |
	\tilde{u}_h^n - \tilde{u}_h^{n - 1} |^2 \dx \\
	+ \sum_{n = 1}^N \left( \gamma -c_1\varepsilon - c_1\left(\frac{h^2}{\tau}+\tau\right) - \frac{1}{2 \varepsilon} \right) S_n .
\end{multline*}
    We now choose $\varepsilon$ small enough and assume $\frac{h^2}{\tau}+\tau$ small enough so that $c_1\varepsilon+c_1\left(\frac{h^2}{\tau}+\tau\right)\leqslant \frac12$, and then $\gamma$ big enough so that $\gamma-c_1\varepsilon-c_1\left(\frac{h^2}{\tau}+\tau\right)-\frac{1}{2\varepsilon}\geqslant \frac{\gamma}{2}+\frac12$. A last application of Lemma \ref{LemMag1}, in the form $G_n \leqslant c_1 (G_n^{\mathrm{in}} + S_n)$, allows us to replace $\frac12 (G_n^{\mathrm{in}} + S_n)$ by $\frac{1}{2c_1} G_n$, which gives the announced coercivity estimate with $c=\frac{1}{2c_1}$, the gradient term being now integrated over the whole of $\Omega_h^n$ and the remaining $\frac{\gamma}{2}S_n$ giving the last line of the statement.
\end{proof}

\subsection{Consistency and convergence estimates}
\label{Sect:Consistency}

The goal is to combine the coercivity proven in the previous section with
interpolation estimates, and to obtain optimal convergence in the $L^2 (H^1)$ norm.

We suppose that the exact solution $u$ is in the space $\mathcal{H}^k(Q_T)$ as defined in (\ref{Hkspace}) and
extend it from $Q_n$ to $I_n \times \Omega_h^n$ for every $n$. Let us denote this extension by $u^n$. {We can safely assume that all these extensions are compatible between the time slabs}
\begin{equation}\label{compatExt}
	{u^n(t_{n-1},\cdot)=u^{n-1}(t_{n-1},\cdot) \text{ on } \Omega_h^n\cap\Omega_h^{n-1}, \quad n\geqslant 2, \qquad u^1(t_0,\cdot)=u^0 \text{ on } \Omega_h^0 ,}
\end{equation}
and satisfy, in accordance with (\ref{Hkspace}), 
\begin{equation}
	\label{regu} u^n \in H^2 (I_n ; H^{k - 1} (\Omega_h^n)) \cap H^1 (I_n ; H^k
	(\Omega_h^n)) \cap L^2 (I_n ; H^{k + 1} (\Omega_h^n)) .
\end{equation}
Each $u^n$ satisfies $\frac{\partial u^n}{\partial t} - \Delta u^n =
\tilde{f}^n$ on $I_n \times \Omega_h^n$ with some $\tilde{f}^n \in L^2 (I_n ; H^{k - 1} (\Omega_h^n))$.  The extension $\tilde{f}^n$ coincides with $f$ inside $\Omega (t)$ for all $t \in I_n$.
 We also introduce $\tilde{u}^n = u^n / \varphi$ and note that
\begin{equation}
	\label{regutil} \tilde{u}^n \in H^1 (I_n ; H^{k - 1} (\Omega_h^n)) \cap L^2
	(I_n ; H^k (\Omega_h^n)) .
\end{equation}
To prove this regularity of {$\tilde{u}^n$}, we start by applying {a Hardy-type inequality, similar to that of Lemma \ref{LemHardy}, but generalized to higher orders in space as in  \cite{duprez2020phi}},  at any fixed time $t \in I_n$: if ${u^n} (t) \in H^{k + 1}
(\Omega^n_h)$, then ${\tilde{u}^n} (t) \in H^k (\Omega^n_h)$. In view of (\ref{regu}), this implies $\tilde{u}^n \in L^2 (I_n ; H^k (\Omega_h^n))$.
Now, looking at the identity
\[ \frac{\partial {u^n}}{\partial t} - \frac{\partial \varphi}{\partial t} {\tilde{u}^n}
= \varphi \frac{\partial {\tilde{u}^n}}{\partial t} \]
we see that its LHS is in $L^2 (I_n ; H^k (\Omega_h^n))$. Thus, applying the Hardy inequality again, we conclude that $\frac{\partial {\tilde{u}^n}}{\partial	t}$ is in $L^2 (I_n ; H^{k - 1} (\Omega_h^n))$. Note that the norms of ${u^n}$ and ${\tilde{ u}^n}$ in \eqref{regu} and \eqref{regutil} respectively can be assumed to depend continuously on the norm of $u$ in $\mathcal{H}^k(Q_T)$.

We put the slices $u^n = \varphi \tilde{u}^n$ in {$U=(u^n = \varphi \tilde{u}^n)_{n = 1, \ldots, N}$.}
The form $a_h$ defined by \eqref{ahalt}, can be also written as follows, for all $U, V$ with $U$ as above and $V=(v^n = \varphi \tilde{v}^n)_{n	= 1, \ldots, N}$:
\begin{multline}\label{ahgen}
	a_h (U, V) = \sum_{n = 1}^N \left[ \int_{I_n} \int_{\Omega_h^n} \frac{\partial u^n}{\partial t} v^n \dx \dt \right. +
\int_{\Omega_h^n \cap \Omega_h^{n - 1}} (u^n (t_{n - 1}) -\mathbf{1}_{n >1} u^{n - 1} (t_{n - 1})) v^n (t_{n - 1}) \dx \\
 + \int_{I_n} \int_{\Omega_h^n} 
 {\nabla u^n\cdot\nabla v^n}\dx \dt - \int_{I_n} \int_{\partial \Omega_h^n} \partial_{\nu}{u^n}v^n \dsigma \dt \\
\left. + \gamma h\sum_{F\in\mathcal{F}_h^{n, \Gamma}} \int_{I_n}  \int_{F} [\![\partial_\nu u^n]\!] [\![\partial_\nu v^n]\!] \dsigma \dt + \gamma h^2  \sum_{T\in\mathcal{T}_h^{n,\Gamma}} \int_{I_n} \int_{T} \left( \frac{\partial \varphi}{\partial t}
\tilde{u}^n - \Delta u^n \right)  \left( \frac{\partial \varphi}{\partial t}  \tilde{v}^n - \Delta v^n \right) \dx \dt\right].
\end{multline}
In particular, this expression coincides with \eqref{ahalt} for {$U_h,V_h\in\mathbb{V}_h$, since $\int_{I_n}\varphi\frac{\partial \varphi}{\partial t}\dt=\frac{\varphi^2(t_n)-\varphi^2(t_{n-1})}{2}$ and $\tilde u^n_h$ does not depend on $t$}. Applying it to the exact solution $U$ and any test function {$V_h\in\mathbb{V}_h$, integrating by parts in $x$ and using \eqref{compatExt} together with $[\![\partial_\nu u^n]\!]=0$}, we observe
\begin{multline*}
	a_h (U, V_h) = \sum_{n = 1}^N \left[ \int_{I_n} \int_{\Omega_h^n} \tilde{f}^n v_h^n \dx \dt + \gamma h^2  \sum_{T\in\mathcal{T}_h^{n,\Gamma}}\int_{I_n} \int_{T} \left( \tilde{f}^n - \varphi \frac{\partial \tilde{u}^n}{\partial t}
\right)  \left( \frac{\partial \varphi}{\partial t}  \tilde{v}_h^n - \Delta
v_h^n \right) \dx \dt \right] \\
 + \int_{\Omega_h^0} u^0 \varphi (t_0)  \tilde{v}_h^1 \dx .
\end{multline*}
Let $\hat{u}^n_h = \frac{1}{\tau}  \int_{t_{n - 1}}^{t_n} I_h  \tilde{u}^n (t)
\dt$ with {Clément} interpolant $I_h$ into the {$\mathbb{P}_k$} finite elements on mesh $\mathcal{T}_h^n$. Set $\check{u}^n_h = \varphi \hat{u}^n_h$ and put all $\check{u}^n_h$ into {$\check{U}_h\in\mathbb{V}_h$.} We have then
\begin{multline}\label{GalOrt}
	a_h (\check{U}_h   - U_h, V_h) = a_h (\check{U}_h - U, V_h) \\
 + \sum_{n = 1}^N \left[ \int_{I_n} \int_{\Omega_h^n} (\tilde{f}^n - f) \varphi \tilde{v}_h^n \dx \dt + \gamma h^2 \sum_{T\in\mathcal{T}_h^{n,\Gamma}} \int_{I_n}
\int_{T} \left( \tilde{f}^n - \varphi \frac{\partial
	\tilde{u}^n}{\partial t} - f \right)  \left( \frac{\partial \varphi}{\partial
	t}  \tilde{v}_h^n - \Delta v_h^n \right) \dx \dt \right] .
\end{multline}

Now let us put $V_h = \check{U}_h - U_h$, apply the coercivity Proposition \ref{coerc} on the LHS of (\ref{GalOrt}), and perform some rearrangements on its RHS, in order to be able to apply the interpolation estimates to $\check{U}_h   - U_h$. A special care
should be taken when dealing with the terms in $a_h (\check{U}_h - U, V_h)$ corresponding to the derivative in time, i.e. in the first line in the expression for $a_h (\check{U}_h - U, V_h)$, as in (\ref{ahgen}). Let us look more closely at the terms in this line, taking any $n > 1$ {and recalling \eqref{compatExt}, which makes the contributions of $u^n$ and $u^{n-1}$ at $t_{n-1}$ cancel}:
{\begin{align*} &\int_{I_n} \int_{\Omega_h^n}  \frac{\partial (\check{u}^n_h -
	u^n)}{\partial t} v_h^n \dx \dt + \int_{\Omega_h^n \cap
	\Omega_h^{n - 1}} (\check{u}^n_h - \check{u}^{n - 1}_h)  (t_{n - 1}) v^n_h
(t_{n - 1}) \dx \\
 &= \int_{I_n} \int_{\Omega_h^n}  \frac{\partial \varphi}{\partial t}
(\hat{u}_h^n - \tilde{u}^n) \varphi \tilde{v}_h^n \dx \dt -
\int_{I_n} \int_{\Omega_h^n \setminus \Omega_h^{n - 1}} \varphi \frac{\partial
	\tilde{u}^n}{\partial t} \varphi \tilde{v}^n_h \dx \dt \\
 & \qquad + \int_{\Omega_h^n \cap \Omega_h^{n - 1}} \varphi(t_{n-1}) (\hat{u}_h^n  - \hat{u}_h^{n - 1}) \varphi(t_{n-1}) \tilde{v}_h^n \dx  -
\int_{I_n} \int_{\Omega_h^n \cap \Omega_h^{n - 1}} \varphi \frac{\partial
	\tilde{u}^n}{\partial t} \varphi \tilde{v}^n_h \dx \dt \\
 &= \int_{I_n} \int_{\Omega_h^n}  \frac{\partial \varphi}{\partial t}
(\hat{u}_h^n - \tilde{u}^n) \varphi \tilde{v}_h^n \dx \dt -
\int_{I_n} \int_{\Omega_h^n \setminus \Omega_h^{n - 1}} \varphi \frac{\partial
	\tilde{u}^n}{\partial t} \varphi \tilde{v}^n_h \dx \dt \\
 & \qquad + \int_{I_n} \int_{\Omega_h^n \cap \Omega_h^{n - 1}} \varphi \left(
\frac{{\hat{u}_h^n}  - \hat{u}_h^{n - 1}}{\tau} - \frac{\partial
	\tilde{u}^n}{\partial t} \right) \varphi  \tilde{v}^n_h \dx
\dt \\
 & \qquad + \int_{\Omega_h^n \cap \Omega_h^{n - 1}} \left( \varphi^2 (t_{n - 1})
- \frac{1}{\tau} \int_{I_n}^{} \varphi^2 (t) \dt \right) \left(
{\hat{u}_h^n}  - \hat{u}_h^{n - 1} \right) \tilde{v}^n_h
\dx \\
 &\lesssim \left\| \frac{\partial \varphi}{\partial t} (\tilde{u}^n -
\hat{u}_h^n) \right\|_{I_n \times \Omega_h^n} \| \nabla v_h^n
\|_{I_n \times \Omega_h^n} + h  \left\| \varphi \frac{\partial
	\tilde{u}^n}{\partial t} \right\|_{I_n \times (\Omega_h^n \setminus
	\Omega_h^{n - 1})} \| \nabla v_h^n \|_{I_n \times \Omega_h^n} \\
 & \qquad + \left\| \varphi \left( \frac{\partial \tilde{u}^n}{\partial t} -
\frac{{\hat{u}_h^n}  - \hat{u}_h^{n - 1}}{\tau} \right)  \right\|
_{I_n \times (\Omega_h^n \cap \Omega_h^{n - 1})} \| \nabla v_h^n
\|_{I_n \times \Omega_h^n} \\
 & \qquad + \tau^{3 / 2} \left\| \frac{{\hat{u}_h^n}
	- \hat{u}_h^{n - 1}}{\tau} \right\|_{\Omega_h^n \cap \Omega_h^{n -
		1}} \| \nabla v_h^n \|_{I_n \times \Omega_h^n} .
\end{align*}}
{We have used here the following bounds for $v^n_h = \varphi \tilde{v}^n_h$, 
\[ \|v^n_h \|_{I_n \times \Omega_h^n} \lesssim \| \nabla v_h^n \|_{I_n \times
	\Omega_h^n}, \qquad \|v^n_h \|_{I_n \times (\Omega_h^n \setminus
	\Omega_h^{n - 1})} \lesssim h \| \nabla v_h^n \|_{I_n \times \Omega_h^n} \,,\]
which are the inequalities of  Corollary \ref{LemPoin}, integrated in time.  	
We have also observed that
\[ \left| \varphi^2 (t_{n - 1}) - \frac{1}{\tau}  \int_{I_n}
\varphi^2 (t) \dt \right| = \frac{1}{\tau} \left|
\int_{I_n} \int_{t_{n - 1}}^{t} \frac{\partial
	\varphi^2}{\partial t} (s) \ds \dt \right| \lesssim \sqrt{\tau}
\left( \int_{I_n} \varphi^2 (t) \dt \right)^{1 / 2} \]
by the uniform bound of $\frac{\partial \varphi }{\partial t}$ from  Assumption \ref{asm1}, entailing
\begin{multline*}
 \left| \int_{\Omega_h^n \cap \Omega_h^{n - 1}} \left( \varphi^2 (t_{n - 1})
- \frac{1}{\tau}  \int_{I_n} \varphi^2 (t) \dt \right) 
(\hat{u}_h^n - \hat{u}_h^{n - 1})  \tilde{v}^n_h \dx \right| 
\\
 \lesssim \sqrt{\tau} \| \hat{u}_h^n - \hat{u}_h^{n - 1} \|_{\Omega_h^n \cap
	\Omega_h^{n - 1}} \left( \int_{I_n} \int_{\Omega_h^n \cap
	\Omega_h^{n - 1}} \varphi^2 | \tilde{v}^n_h |^2 \dx \dt
\right)^{1 / 2} \lesssim \sqrt{\tau} \| \hat{u}_h^n - \hat{u}_h^{n - 1}
\|_{\Omega_h^n \cap \Omega_h^{n - 1}} \| \nabla v_h^n \|_{I_n\times\Omega_h^n} 
\end{multline*}
thanks again to Corollary \ref{LemPoin}.}
The estimates in the case $n = 1$ are the same, if we compensate for the
absence of $\check{u}^0_h - u^0 (t_0)$ by adding
\[ \int_{\Omega_h^0} (\check{u}^0_h - u^0) v^1 (t_0) \dx \leqslant \|
u^0 - \varphi  (t_0) \hat{u}_h^0 \|_{\Omega_h^0} \| \varphi  (t_0) \tilde{v}_h^1
\|_{\Omega_h^0} \]
with $\tilde{u}^0 = u^0 / \varphi  (t_0)$.

Using these estimates on the first line of $a_h(\check{U}_h-U,V_h)$, i.e. on the first line of the RHS of (\ref{GalOrt}) with
$V_h = \check{U}_h - U_h$, proceeding in the same way in the other lines of the
RHS, which is more straightforward, and applying Young inequalities to all the
resulting terms, which gives the norms of the test functions $v_h^n$, which
can be absorbed by the same arising on the LHS thanks to Proposition \ref{coerc},
we arrive at
\begin{multline}\label{BeforeInterp}
	 \| \varphi (t_N) \tilde{v}_h^N \|_{\Omega (t_N)}^2 + \|
\varphi (t_0) \tilde{v}_h^1 \|_{\Omega^0_h}^2 + \sum_{n = 1}^N \| \nabla v^n_h
\|_{I_n \times \Omega_h^n}^2 + \sum_{n = 2}^N  \| \varphi (t_{n - 1})
(\tilde{v}_h^n - \tilde{v}_h^{n - 1}) \|_{\Omega (t_{n - 1})}^2
\\
+ \sum_{n = 1}^N \left[ \hspace{0.17em} h \sum_{F\in\mathcal{F}_h^{n, \Gamma}}  \| [\![\partial_\nu v^n_h]\!] \|_{I_n \times F}^2 + h^2  \left\|
\frac{\partial \varphi}{\partial t}  \tilde{v}^n_h - \Delta v^n_h \right\|_{I_n \times {\Omega_h^{n,\Gamma}}}^2 \right] \\
 \lesssim \sum_{n = 1}^N \Bigg[
   \left\| \frac{\partial \varphi}{\partial t} (\tilde{u}^n - \hat{u}_h^n) \right\|_{I_n \times \Omega_h^n}^2 +
\left\| \varphi \left( \frac{\partial \tilde{u}^n}{\partial t} -
\frac{{\hat{u}_h^n}  - \hat{u}_h^{n - 1}}{\tau} \right)  \right\|_{I_n \times (\Omega_h^n \cap \Omega_h^{n - 1})}^2
+ h^2 \left\| \varphi \frac{\partial \tilde{u}^n}{\partial t} \right\|_{I_n	\times {\Omega_h^{n,\Gamma}}}^2
\\
+ \tau^3 \left\| \frac{{\hat{u}_h^n}  - \hat{u}_h^{n - 1}}{\tau} \right\|_{\Omega_h^n\cap \Omega_h^{n - 1}}^2
+ \| \nabla (u^n - \check{u}^n_h)\|_{I_n \times \Omega_h^n}^2
+ h \|\partial_\nu ({u^n} - \check{u}^n_h)\|_{I_n \times \partial \Omega_h^n}^2 \\
+ h\sum_{F\in\mathcal{F}_h^{n, \Gamma}}  \| [\![\partial_\nu (u^n - \check{u}^n_h) ]\!] \|_{I_n \times F}^2
+ h^2  \left\| \frac{\partial\varphi}{\partial t} (\tilde{u}^n - \hat{u}^n_h) \right\|^2_{I_n \times{\Omega_h^{n,\Gamma}}}
+ h^2 \| \Delta (u^n - \check{u}^n_h) \|^2_{I_n \times {\Omega_h^{n,\Gamma}}} \\
+ h^2 \|f - \tilde{f}^n \|^2_{I_n	\times {\Omega_h^{n,\Gamma}}} \Bigg]
+ \| u^0 - \varphi  (t_0) I_h \tilde{u}^0 \|_{\Omega_h^0}^2.
\end{multline}
Note, in particular, that the second line of (\ref{GalOrt}) has resulted in
$h^2 \|f - \tilde{f}^n \|^2_{I_n \times {\Omega_h^{n,\Gamma}}}$ \ since $f =
\tilde{f}^n$ outside ${\Omega_h^{n,\Gamma}}$, and one gets a factor $O (h)$
from the Poincar{\'e} inequality for $\varphi \tilde{v}^n_h$ on
$\Omega_h^{n,\Gamma}$ {(first inequality of Corollary \ref{LemPoin})}. The contribution of $\varphi \frac{\partial
	\tilde{u}^n}{\partial t}$ on ${\Omega_h^{n,\Gamma}}$ has been combined
with the same on $\Omega_h^n \setminus \Omega_h^{n - 1}$ knowing that
$\Omega_h^n \setminus \Omega_h^{n - 1} \subset {\Omega_h^{n,\Gamma}}$.

Now, let us list the interpolation properties we need to bound the RHS of (\ref{BeforeInterp}):
\begin{lemma}	\label{LemInterp}
Recall $\hat{u}^n_h = \frac{1}{\tau}  \int_{I_n} I_h
	\tilde{u}^n (t) \dt$. We have
	\begin{equation}
		\label{L2intutil} \| \tilde{u}^n - \hat{u}^n_h \|_{I_n \times
			\Omega_h^n} \lesssim \tau \| \tilde{u}^n \|_{H^1 (I_n ; L^2 (\Omega^n_h))}
		+ h^k \| \tilde{u}^n \|_{L^2 (I_n ; H^k (\Omega^n_h))}
	\end{equation}

	\begin{equation}
		\label{H1intu} \| \nabla (u^n - \check{u}^n_h)\|_{I_n \times \Omega_h^n}
		\lesssim \tau \|u^n \|_{H^1 (I_n ; H^1 (\Omega^n_h))} + h^k |u^n |_{L^2
			(I_n ; H^{k + 1} (\Omega^n_h))}
	\end{equation}

	\begin{equation}
		\label{H2intu} \| \Delta (u^n - \check{u}^n_h) \|_{I_n \times{\Omega_h^{n,\Gamma}}}
		\lesssim \frac{\tau}{h} \|u^n \|_{H^1 (I_n; H^1(\Omega^n_h))}
			+ h^{k - 1} |u^n |_{L^2 (I_n ; H^{k + 1} (\Omega^n_h))}
	\end{equation}

	\begin{multline}\label{Edgesintu}
		 \|\partial_{\nu} ({u^n} - \check{u}^n_h)\|_{I_n \times
			\partial \Omega_h^n}
			+ \sum_{F\in\mathcal{F}_h^{n, \Gamma}}  \|[\![\partial_{\nu} ({u^n} - \check{u}^n_h) ]\!]\|_{I_n\times F} \\
			 \lesssim \frac{\tau}{\sqrt h} \| {u^n} \|_{H^1 (I_n; H^1 (\Omega^n_h))}
			 + h^{k - 1 / 2} \| {u^n} \|_{L^2 (I_n ; H^{k + 1}(\Omega^n_h))}
	\end{multline}

	\begin{multline}\label{L2intdudt}
	 \left\| \varphi \left( \frac{\partial \tilde{u}^n}{\partial t} -
	\frac{{\hat{u}_h^n}  - \hat{u}_h^{n - 1}}{\tau} \right) \right\|_{I_n	\times (\Omega_h^n \cap \Omega_h^{n - 1})}  \lesssim \tau (\|u\|_{H^2 (I_{n -
			1} \cup I_n ; L^2 (\Omega^n_h \cap \Omega_h^{n - 1}))} +\| \tilde{u} \|_{H^1
		(I_{n - 1} \cup I_n ; L^2 (\Omega^n_h \cap \Omega_h^{n - 1}))})
	\\
		 + h^k \left( \left\| \frac{\partial u}{\partial t}
		\right\|_{L^2 (I_{n - 1} \cup I_n ; H^k (\Omega^n_h \cap \Omega_h^{n -
				1}))} + \| \tilde{u} \|_{L^2 (I_{n - 1} \cup I_n ; H^k (\Omega^n_h \cap
			\Omega_h^{n - 1}))} \right)
	\end{multline}
and
\begin{equation}
	\label{Boundedintdudt} \left\| \frac{{\hat{u}_h^n}  - \hat{u}_h^{n -
			1}}{\tau} \right\|_{\Omega_h^n \cap \Omega_h^{n - 1}} \lesssim \|
	\tilde{u} \|_{H^1 (I_n \cup I_{n - 1} ; L^2 (\Omega^n_h \cap \Omega_h^{n -
			1}))}.
\end{equation}
\end{lemma}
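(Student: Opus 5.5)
The plan is to split every error into a temporal averaging error and a spatial interpolation error, treating the two separately. Write $\bar w^n=\frac1\tau\int_{I_n}w\,\dt$ for the time average on $I_n$. Since $I_h$ is linear and time-independent, $\hat u^n_h=I_h\bar{\tilde u}^n$. Then
\[
\tilde u^n-\hat u^n_h=(\tilde u^n-\bar{\tilde u}^n)+(\bar{\tilde u}^n-I_h\bar{\tilde u}^n).
\]

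\emph{The estimates (\ref{L2intutil})--(\ref{Edgesintu}).} The first term is controlled by the one-dimensional Poincar\'e inequality in time, $\|w-\bar w\|_{L^2(I_n)}\leqslant \tau\|\partial_t w\|_{L^2(I_n)}$, applied pointwise in $x$. The second term is controlled by the Cl\'ement estimate $\|v-I_hv\|_{\Omega_h^n}\lesssim h^k|v|_{k,\Omega_h^n}$, together with Jensen's inequality $\tau\|\bar w\|^2\leqslant\|w\|^2_{L^2(I_n)}$. Together these give (\ref{L2intutil}).

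For (\ref{H1intu}) I would decompose $u^n-\check u^n_h$ in the same way:
\[
u^n-\check u^n_h=\varphi(\tilde u^n-\bar{\tilde u}^n)+\varphi(\bar{\tilde u}^n-I_h\bar{\tilde u}^n).
\]
\begin{itemize}
\item[(a)] For the time part, $\varphi(\tilde u^n-\bar{\tilde u}^n)=(u^n-\bar u^n)-(\varphi-\bar\varphi)\tilde u^n+\overline{(\varphi-\bar\varphi)\tilde u^n}$-type corrections. Each piece carries a factor $\tau$ times $\partial_t u$ or $\partial_t\varphi\,\tilde u$ in $H^1$. These are bounded by $\|u^n\|_{H^1(H^1)}$ using the Hardy-type control of $\tilde u$ by $u$ from (\ref{regutil}).
\item[(b)] For the space part, the Leibniz rule and Assumption~\ref{asm1} give the standard $\varphi$-FEM estimate $|\varphi(v-I_hv)|_{1}\lesssim h^k|v|_k$. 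Combined with the higher-order Hardy bound $|\tilde u|_k\lesssim|u|_{k+1}$, this yields the $h^k$ term.
\end{itemize}
Estimates (\ref{H2intu}) and (\ref{Edgesintu}) follow from the same decomposition. For the spatial part, use broken $H^2$ interpolation bounds on the cells of $\Omega_h^{n,\Gamma}$ (order $h^{k-1}$) and scaled trace inequalities on faces (order $h^{k-1/2}$). For the temporal part, the function is discrete only after interpolation. I would therefore first compare with $\varphi I_h(\tilde u^n-\bar{\tilde u}^n)$ and then apply inverse inequalities, $\|\Delta\cdot\|\lesssim h^{-1}\|\nabla\cdot\|$ and $\|\cdot\|_F\lesssim h^{-1/2}\|\nabla\cdot\|$. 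These produce the factors $\tau/h$ and $\tau/\sqrt h$ multiplying the $H^1(H^1)$ norm.

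\emph{The estimate (\ref{Boundedintdudt}).} Write
\[
\hat u^n_h-\hat u^{n-1}_h=I_h\big(\bar{\tilde u}^n-\bar{\tilde u}^{n-1}\big),
\qquad
\bar{\tilde u}^n-\bar{\tilde u}^{n-1}=\frac1\tau\int_{I_n}\big(\tilde u(t)-\tilde u(t-\tau)\big)\,\dt .
\]
Hence $\|\bar{\tilde u}^n-\bar{\tilde u}^{n-1}\|\leqslant \tau^{1/2}\|\partial_t\tilde u\|_{L^2(I_{n-1}\cup I_n;L^2)}$, and $L^2$-stability of $I_h$ on the overlap concludes. One point needs care here: $I_h$ is defined on two different meshes, $\mathcal T_h^n$ and $\mathcal T_h^{n-1}$. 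Both are submeshes of the same background mesh $\mathcal T_h^{\mathcal O}$, and the Cl\'ement interpolant must agree on cells whose patches lie in the overlap. I would use a Cl\'ement interpolant defined on the background mesh applied to the compatible extension (\ref{compatExt}), and then restrict it. This is the step I regard as delicate.

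\emph{The estimate (\ref{L2intdudt}).} This is the main obstacle. I would write $\frac{\hat u^n_h-\hat u^{n-1}_h}{\tau}=I_h\,\delta_\tau\bar{\tilde u}$ and split
\[
\partial_t\tilde u-\delta_\tau\bar{\tilde u}
=\big(\partial_t\tilde u-\delta_\tau\bar{\tilde u}\big)_{\text{time}}
+(\mathrm{Id}-I_h)\,\delta_\tau\bar{\tilde u}.
\]
\begin{itemize}
\item[(a)] The time-consistency part is a difference of averaged derivatives over two adjacent slabs. It is $O(\tau)$ in second time derivatives. Since $\tilde u$ only has $H^1$ in time, I would multiply by $\varphi$ first and use $\varphi\,\partial_t\tilde u=\partial_tu-\partial_t\varphi\,\tilde u$. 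This brings in $\|u\|_{H^2(L^2)}$ and $\|\tilde u\|_{H^1(L^2)}$, exactly as the right-hand side suggests.
\item[(b)] The spatial part is handled with $\|\varphi(\mathrm{Id}-I_h)w\|\lesssim h^k|w|_{k}$ for $w=\delta_\tau\bar{\tilde u}$, a weighted average of $\partial_t\tilde u$. Since $\partial_t\tilde u$ is only in $H^{k-1}$, I would instead exploit the $\varphi$-weight by passing through $\varphi\partial_t\tilde u=\partial_tu-\partial_t\varphi\,\tilde u\in H^k$. This gives the terms $\|\partial_tu\|_{L^2(H^k)}+\|\tilde u\|_{L^2(H^k)}$.
\end{itemize}
Justifying this commutation of $\varphi$ with $I_h$ up to an $O(h^k)$ error is where I expect most of the work to go.
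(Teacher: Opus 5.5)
There is a genuine gap in step (b) for \eqref{H1intu}. Your overall architecture matches the paper's: time average plus Cl\'ement interpolant, the Poincar\'e inequality in time, and the identity $\varphi\,\partial_t\tilde u=\partial_t u-\partial_t\varphi\,\tilde u$ to make up for the missing time regularity of $\tilde u$. But the estimate $|\varphi(v-I_hv)|_1\lesssim h^k|v|_k$ that you call standard is false. The Leibniz rule only gives $|\varphi(v-I_hv)|_1\lesssim\|\varphi\|_{L^\infty}|v-I_hv|_1+\|\nabla\varphi\|_{L^\infty}\|v-I_hv\|\lesssim h^{k-1}|v|_k$. On the region where $|\varphi|\geqslant m$ (Assumption~\ref{asm0}(iii)) nothing recovers the lost power of $h$: for $k=1$ your claim would say that the Cl\'ement interpolant of an arbitrary $H^1$ function converges at rate $h$ in $H^1$ there. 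So knowing $\tilde u\in H^k$, which is all the Hardy bound $|\tilde u|_k\lesssim|u|_{k+1}$ provides, is not enough.

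The paper instead uses \eqref{fromStokes}, imported from \cite{duprez2023phi}: $\|\nabla(v-\varphi I_h(v/\varphi))\|_{\Omega_h^n}\lesssim h^k|v|_{H^{k+1}(\Omega_h^n)}$ for $v\in H^{k+1}(\Omega_h^n)$ vanishing on $\Gamma(t)$. Its mechanism uses $u\in H^{k+1}$ fully. On cells within $O(h)$ of $\Gamma(t)$, the factor $|\varphi|\lesssim h$ restores the missing power. On the remaining cells, $|\varphi|$ is comparable to a constant on each Cl\'ement patch. There $\tilde u$ has weighted $H^{k+1}$ regularity, $\|\varphi\,\nabla^{k+1}\tilde u\|\lesssim\|u\|_{H^{k+1}}$ (expand $\nabla^{k+1}(\varphi\tilde u)$ and bound the lower-order terms by the higher-order Hardy inequality), so the local interpolation error is $O(h^k)$.

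This is exactly the $H^1$ analogue of the commutation of $\varphi$ with $I_h$ that you correctly single out for \eqref{L2intdudt}(b), where the paper invokes \eqref{fromStokesL2}; you already need it for \eqref{H1intu}. Apply it, as the paper does, at each frozen time $s$ to $u^n(s)$ with its own weight $\varphi(s)$. In your ordering the weight $\varphi(t)$ acts on the average $\bar{\tilde u}^n$, which leaves an extra commutator $(\varphi(t)-\varphi(s))(\tilde u^n(s)-I_h\tilde u^n(s))$ to control. Your plain Leibniz argument is harmless on band cells and their neighbours, where $|\varphi|\lesssim h$, and that is where the spatial parts of \eqref{H2intu} and \eqref{Edgesintu} live.

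A smaller point on \eqref{Boundedintdudt}: your computation does not give the stated bound. It gives $\|\tau^{-1}(\hat u^n_h-\hat u^{n-1}_h)\|_{\Omega_h^n\cap\Omega_h^{n-1}}\lesssim\tau^{-1/2}\|\tilde u\|_{H^1(I_{n-1}\cup I_n;L^2(\Omega_h^n\cap\Omega_h^{n-1}))}$. The factor $\tau^{-1/2}$ cannot be removed: for $\tilde u=t\,g(x)$ the left-hand side is close to $\|g\|$, while the right-hand norm is $O(\tau^{1/2})$. So \eqref{Boundedintdudt} only holds with this factor. This is harmless for the theorem, because the quantity enters \eqref{BeforeInterp} only through $\tau^3\|\tau^{-1}(\hat u^n_h-\hat u^{n-1}_h)\|^2$, which with your bound still sums to $O(\tau^2)$. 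Your concern that $I_h$ is built on two different meshes is also legitimate, and the paper does not address it.
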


\begin{proof}
	To prove (\ref{L2intutil}), we observe first for any $t \in I_n$
	\[ \| \tilde{u}^n (t) - I_h  \tilde{u}^n (t) \|_{\Omega_h^n} \lesssim h^k
	\| \tilde{u}^n (t) \|_{H^k (\Omega^n_h)}. \]
	This gives, after integrating in $t$,
	\[ \left\|  \frac{1}{\tau}  \int_{t_{n - 1}}^{t_n}  \tilde{u}^n (t)
	\dt - \hat{u}^n_h \right\|_{\Omega_h^n} \lesssim
	\frac{h^k}{\sqrt{\tau}} \| \tilde{u}^n (t) \|_{L^2 (I_n ; H^k
		(\Omega^n_h))}. \]
	By triangle inequality,
	\[ \| \tilde{u}^n - \hat{u}^n_h \|_{I_n \times \Omega_h^n} \leqslant
	\left\|  \tilde{u}^n - \frac{1}{\tau}  \int_{t_{n - 1}}^{t_n}
	\tilde{u}^n (t) \dt \right\|_{I_n \times \Omega_h^n} +
	\sqrt{\tau} \left\|  \frac{1}{\tau}  \int_{t_{n - 1}}^{t_n}
	\tilde{u}^n (t) \dt - \hat{u}^n_h \right\|_{\Omega_h^n} \]
	so that (\ref{L2intutil}) follows by observing that
	\[ \left\|  \tilde{u}^n - \frac{1}{\tau}  \int_{t_{n - 1}}^{t_n}
	\tilde{u}^n (t) \dt \right\|_{I_n \times \Omega_h^n}
	\leqslant \tau \| \tilde{u}^n \|_{H^1 (I_n ; L^2 (\Omega^n_h))}. \]

	The proof of \eqref{H1intu} is similar, starting from
	\[ \| \nabla (u^n (t) - \varphi (t) I_h \tilde{u}^n (t))\|_{\Omega_h^n} \lesssim h^k |u^n (t) |_{H^{k + 1} (\Omega^n_h)}, \]
	which is valid for any $t \in I_n$. Indeed, as proven in \cite{duprez2023phi}, we have
	for any function $v \in H^{k + 1} (\Omega_h^n)$ vanishing on $\Gamma (t)$,
	\begin{equation}\label{fromStokes}
		\left\| \nabla \left( v - \varphi (t) I_h  \frac{v}{\varphi (t)}
		\right) \right\|_{\Omega_h^n} \lesssim h^k | v |_{H^{k + 1} (\Omega^n_h)}.
	\end{equation}

	Bound (\ref{H2intu}) can be deduced from (\ref{H1intu}) by an inverse inequality if $u^n$ is piecewise-polynomial and of class $C^1$ on $\Omega_h^n$ at all time $t\in I_n$. Now, any $u^n\in L^2 (I_n ; H^{k + 1})$ can be approximated by a function of this type, leading to an additional contribution of order $h^{k - 1} |u^n |_{L^2 (I_n ; H^{k + 1})}$ in (\ref{H2intu}), which proves (\ref{H2intu}) for a general $u^n$.  Applying the trace inequality to $\partial_{\nu} (u^n(t) - \varphi I_h \tilde{u}^n(t))$ at each time $t\in I_n$ on each mesh edge, and integrating on $t$, we see that (\ref{Edgesintu}) follows from (\ref{H1intu}) and (\ref{H2intu}).

	The proof of (\ref{L2intdudt}) is more tedious because of possible lack of regularity for $\frac{\partial^2 \tilde{ u}}{\partial t^2}$. We start by	observing for any $t \in I_n$,
	\begin{multline*}
		\left\| \varphi (t) \left( \frac{\partial \tilde{u}}{\partial t} (t)
	- I_h \frac{\tilde{u} (t) - \tilde{u} (t - \tau)}{\tau} \right) \right\|_{\Omega_h^n \cap \Omega_h^{n - 1}} \leqslant \left\|
	\frac{\partial u}{\partial t} (t) - \frac{u (t) - u (t - \tau)}{\tau}
	\right\|_{\Omega_h^n \cap \Omega_h^{n - 1}} \\
	 + \frac{1}{\tau} \left\| \int_{t - \tau}^t \left( \frac{\partial
		u}{\partial t} (s) - \frac{\partial \varphi}{\partial t} (s) \tilde{u} (s)
	\right) \ds - \int_{t - \tau}^t \varphi (s) I_h \frac{\frac{\partial
			u}{\partial t} (s) - \frac{\partial \varphi}{\partial t} (s) \tilde{u}
		(s)}{\varphi (s)} \ds \right\|_{\Omega_h^n \cap \Omega_h^{n -
			1}} \\
	 + \frac{1}{\tau} \left\| \int_{t - \tau}^t (\varphi (s) - \varphi (t))
	I_h \frac{\partial \tilde{u}}{\partial t} (s) \ds \right\|_{\Omega_h^n \cap \Omega_h^{n - 1}} \\
	 + \frac{1}{\tau} \left\| \int_{t - \tau}^t \frac{\partial
		\varphi}{\partial t} (s) (\tilde{u} (s) - \tilde{u} (t)) \ds \right\|_{\Omega_h^n \cap \Omega_h^{n - 1}} + \left\| \left(
	\frac{1}{\tau} \int_{t - \tau}^t \frac{\partial \varphi}{\partial t} (s)
	\ds - \frac{\partial \varphi}{\partial t} (t) \right) \tilde{u} (t)
	\right\|_{\Omega_h^n \cap \Omega_h^{n - 1}} \\
	 \lesssim \sqrt{\tau} \|u\|_{H^2 (t - \tau, t ; L^2 (\Omega^n_h \cap
		\Omega_h^{n - 1}))} + \frac{h^k}{\sqrt{\tau}} \left\| \frac{\partial
		u}{\partial t} - \frac{\partial \varphi}{\partial t} \tilde{u} \right\|_{L^2
		(t - \tau, t ; H^k (\Omega^n_h \cap \Omega_h^{n - 1}))}
	 + \sqrt{\tau} \| \tilde{u} \|_{H^1 (t - \tau, t ; L^2 (\Omega^n_h \cap
		\Omega_h^{n - 1}))} .
	\end{multline*}
	We have used here a bound, inherited from \cite{duprez2023phi} and similar to
	(\ref{fromStokes}): for any function $v \in H^k (\Omega_h^n)$ vanishing on
	$\Gamma (t)$,
	\begin{equation}\label{fromStokesL2}
	 \left\|v - \varphi (t) I_h  \frac{v}{\varphi (t)} \right\|_{\Omega_h^n} \lesssim h^k | v
	|_{H^k (\Omega^n_h)}.
	\end{equation}
	Integrating the bound above in time over $I_n$ gives (\ref{L2intdudt}).

	Finally, observing that
	\[ \frac{{\hat{u}_h^n}  - \hat{u}_h^{n - 1}}{\tau} = \frac{1}{\tau}
	\int_{I_n} I_h \frac{\tilde{u} (t) - \tilde{u} (t - \tau)}{\tau} \dt
	\]
	and combining with bounds for $I_h$, leads to (\ref{Boundedintdudt}).
\end{proof}

Using the estimates of the preceding Lemma in (\ref{BeforeInterp}) together with the regularity of $\tilde{u}$ in (\ref{regutil}) inherited from that of $u$ in (\ref{regu}), leads to
\begin{multline}\label{H1errhitp}
	\left( \sum_{n = 1}^N \| \nabla u^n_h - \nabla \check{u}^n_h \|_{I_n \times\Omega_h^n}^2 \right)^{\frac{1}{2}}
	= \left( \sum_{n = 1}^N \| \nabla v^n_h \|_{I_n \times \Omega_h^n}^2 \right)^{\frac{1}{2}} \\
	\lesssim (\tau + h^k) \Bigg( \sum_{n = 1}^N \Big[ \|u^n \|^2_{H^2 (I_n ; H^{k - 1} (\Omega^n_h))}
	+\|u^n \|^2_{H^1 (I_n ; H^k (\Omega^n_h))} +\|u^n \|^2_{L^2 (I_n ; H^{k +1} (\Omega^n_h))} \Big] \Bigg)^{\frac{1}{2}} \\
	+ \Bigg( \sum_{n = 1}^N \Big[ h^2 \left\| \varphi \frac{\partial	\tilde{u}^n}{\partial t} \right\|_{I_n \times {\Omega_h^{n,\Gamma}}}^2
	+ h^2 \|f - \tilde{f}^n \|^2_{I_n \times {\Omega_h^{n,\Gamma}}} \Big] \Bigg)^{\frac{1}{2}}
	+ h^{k} \| u^0 \|_{H^{k+1} (\Omega_h^0)} .
\end{multline}
Note that the summation over $n$ now takes place inside the square roots, and that the two contributions of the last line are \emph{not} multiplied by $(\tau+h^k)$: they come from the terms of (\ref{BeforeInterp}) which carry no interpolation factor and have to be estimated separately. This is what we do now.

The first of them is bounded observing that $\varphi$ is of order $h$ on ${\Omega_h^{n,\Gamma}}$, thanks to Assumptions \ref{asm0} and \ref{asm2}:
\[ h\left\| \varphi \frac{\partial \tilde{u}^n}{\partial t} \right\|_{I_n \times {\Omega_h^{n,\Gamma}}} \lesssim h^2  \left\|
\frac{\partial \tilde{u}^n}{\partial t} \right\|_{I_n \times
	{\Omega_h^{n,\Gamma}}}
\lesssim \tau \|{\tilde u}^n \|_{H^1 (I_n ; {L^2}(\Omega^n_h))} ,\]
since we have supposed $h^2 \lesssim \tau$. Finally, the norm of   $f - \tilde{f}^n$ can be estimated as follows, recalling that $f = \tilde{f}^n$ on $\Omega (t)$ for all $t$ and that $\mathcal{T}_h^{n,\Gamma}$ is an $O(h)$ band around $\Gamma(t)$ by Assumption \ref{asm2}:
\begin{multline*} \|f - \tilde{f}^n \|_{I_n \times {\Omega_h^{n,\Gamma}}} \lesssim h^{k -
	1} \|f - \tilde{f}^n \|_{L^2 (I_n ; H^{k - 1} (\Omega^n_h))} \\ \lesssim h^{k -
	1} {\left(\|f\|_{L^2 (I_n ; H^{k - 1} (\Omega^n_h))} + \|u^n\|_{H^1(I_n;H^{k-1}(\Omega_h^n))} + \|u^n\|_{L^2 (I_n; H^{k + 1} (\Omega^n_h))}\right)} .
\end{multline*}
{Here, the first inequality uses that $g:=f-\tilde f^n$ vanishes on $\Omega(t)$, so that its derivatives up to the order $k-2$ vanish on $\Gamma(t)$ and a Taylor expansion in the $O(h)$ band gives $\|g\|_{\Omega_h^{n,\Gamma}}\lesssim h^{k-1}\|g\|_{H^{k-1}(\Omega_h^n)}$; the second one uses $\tilde f^n=\frac{\partial u^n}{\partial t}-\Delta u^n$.}
{Both contributions of the last line of (\ref{H1errhitp}) are therefore of order $\tau + h^{k}$, so that}
\[ {\left( \sum_{n = 1}^N \| \nabla u^n_h - \nabla \check{u}^n_h \|_{I_n \times\Omega_h^n}^2 \right)^{\frac{1}{2}}
\lesssim  h^k \| u^0 \|_{H^{k+1} (\Omega_h^0)} + (\tau+h^k)\left( \|u\|_{\mathcal{H}^k (Q_T)} + \Big(\sum_{n = 1}^N \| f \|_{L^2 (I_n ; H^{k - 1} (\Omega^n_h))}^2\Big)^{\frac12} \right) .} \]
{Combining this with the interpolation error estimate (\ref{H1intu}) by the triangle inequality, and observing that $|u-u^n_h|_{H^1(\Omega(t))}\leqslant \|\nabla(u^n-u^n_h)\|_{\Omega_h^n}$ since $\Omega(t)\subset\Omega_h^n$ for $t\in I_n$, gives the error estimate from Theorem \ref{thm:error}.}

\section{Numerical results}
\label{Sect:Numerical_results}

In this section, we focus on the FEniCS \cite{logg2012automated} implementation of our approach, with the aim of assessing the efficiency of our scheme on three test cases\footnote{Simulations performed with FEniCS 2019.1.0, Python 3.11, on an Intel Core Ultra 7 165H CPU (16 cores, x86\_64 architecture) with 32~GB of RAM, running Linux.}: two in two dimensions (a translating circle and a rotating star) and one in three dimensions (a moving popcorn-shaped domain), see Figure~\ref{fig:geometry}. The code used to perform the numerical simulations presented in this article is available in the {GitHub} repository
\vspace{1em}
\begin{center}
\url{https://github.com/PhiFEM/publication_Heat_MovingDomain_fenics}
\end{center}
\vspace{1em}

\begin{figure}[h!]
    \centering

    \includegraphics[width=0.32\linewidth]{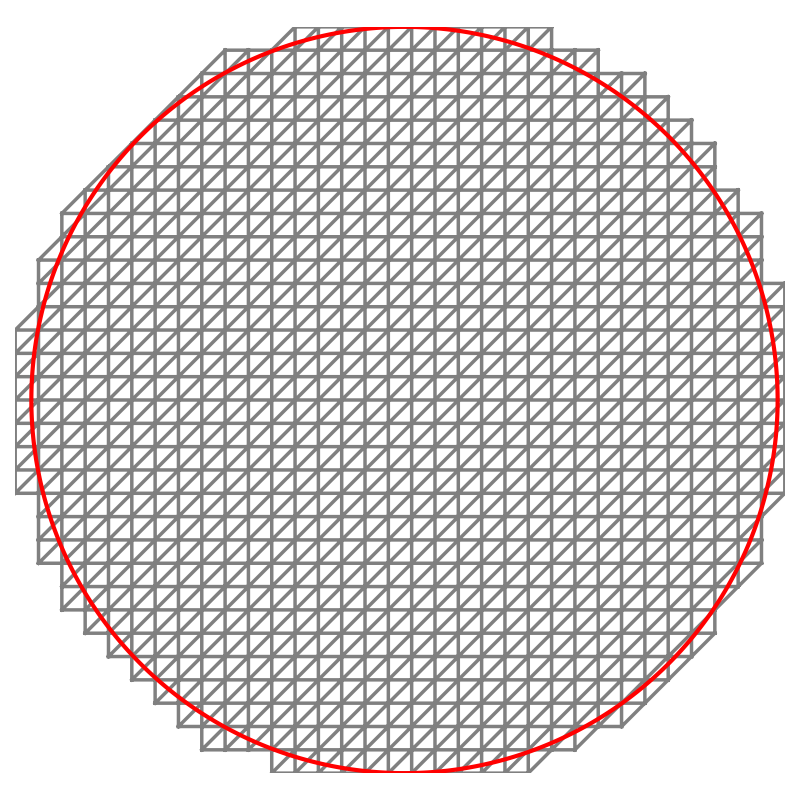}
    \hfill
    \includegraphics[width=0.32\linewidth]{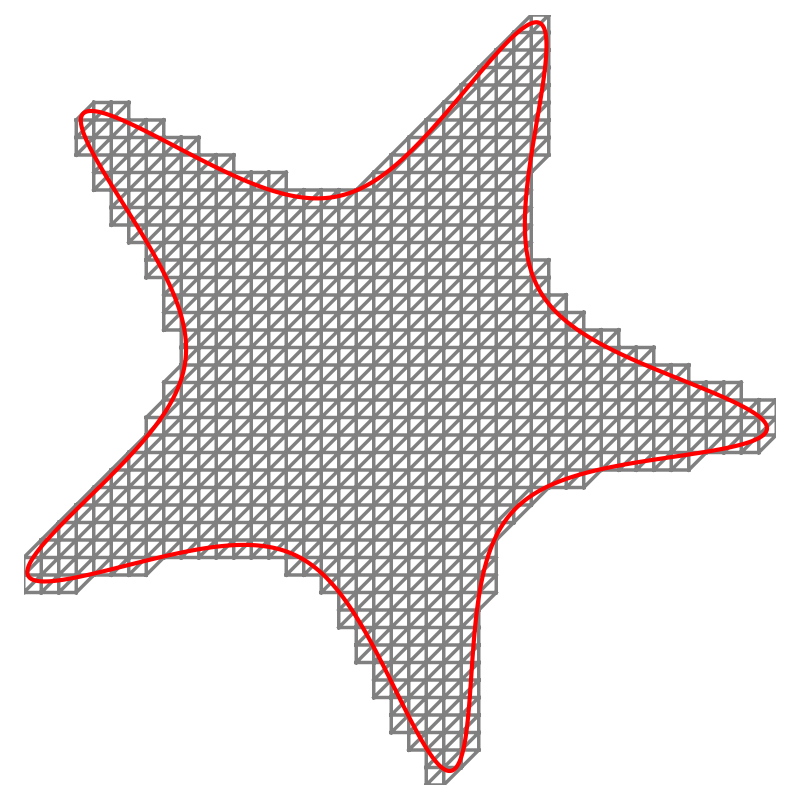}
    \hfill
    \includegraphics[width=0.32\linewidth]{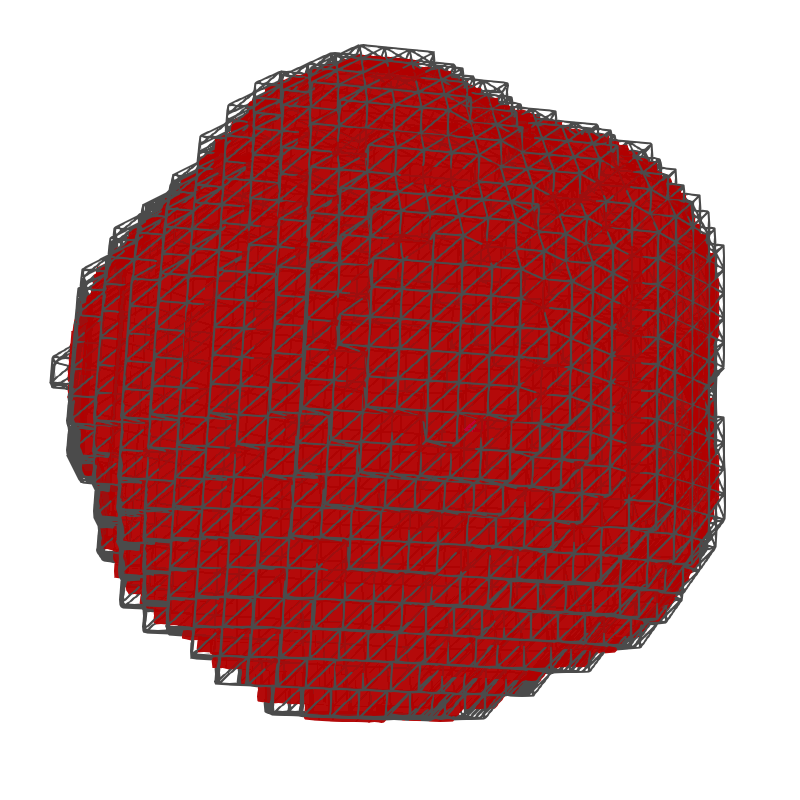}

    \caption{Overview of the three test cases. The grey regions represent the computational submesh $\mathcal{T}_h^n$ used by the $\varphi$-FEM method, while the red curves indicate the exact domain boundary $\partial\Omega(t_n)$. Left: translating circle. Middle: rotating star. Right: moving popcorn-shaped domain in 3D.}
    \label{fig:geometry}
\end{figure}

\noindent Before presenting the results, let us provide some implementation details. First, as explained in Section~\ref{Sect:Scheme_and_main_results}, for the discretization in time, we introduce the uniform grid $t_n = n \tau$ with $n=0,\dots,N$ and $\tau$ the time step. Hence, in order to implement the scheme \eqref{Eq:scheme}, we first define the sub-mesh $\mathcal{T}_h^n$ of the background mesh on which we work at each time step. Indeed, the theoretical definition of this sub-mesh requires evaluating the level-set for all $t \in I_n = [t_{n-1},t_n]$; however, this is not computationally feasible. We therefore choose to rewrite the definition of $\mathcal{T}_h^n$ as follows:
\[ \mathcal{T}_h^n = \left\{ T \in \mathcal{T}_h^{\mathcal{O}} : T \cap \Omega(t_n) \neq \varnothing \text{ or } T \cap \Omega(t_{n-1}) \neq \varnothing \right\}. \]
However, this new definition needs to be modified once again because it is not possible to determine exactly whether a cell is intersected by the boundary at every point. Therefore, the sub-mesh definition we implement is
\[ \mathcal{T}_h^n = \left\{ T \in \mathcal{T}_h^{\mathcal{O}} : \varphi(t_n,x) < 0 \text{ or } \varphi(t_{n-1},x) < 0, \text{ for at least one vertex $x$ of $T$} \right\}. \]

\begin{remark}
In the analysis given in Sections \ref{Sect:Scheme_and_main_results}--\ref{Sect:Proof}, the exact level-set function $\varphi$ is used in the discrete forms. In the implementation, $\varphi$ is replaced by its $\mathbb{P}_\ell$ finite element interpolation on the background mesh, with $\ell=1$ or $\ell=2$, as is customary in previous $\varphi$-FEM works. This substitution is not innocuous: as reported below for the first test case, the accuracy of the scheme is markedly better with $\ell=2$ than with $\ell=1$, which is why we use $\ell=2$ in the second and third test cases. Extending the analysis to a discrete level set is left for future work.
\end{remark}

Figure~\ref{fig:marker_cells} shows the sub-mesh $\mathcal{T}_h^n$ for the first test case. In this case, it is clear that the sub-mesh contains all the cells located completely in $\Omega(t_{n-1})$ or $\Omega(t_n)$, as well as those cut by the boundary of one of these two domains.

\noindent Furthermore, in the scheme \eqref{Eq:scheme}, we need to define several subdomains on which we integrate. Figure~\ref{fig:marker_cells} illustrates the numerical cell selection used to perform integration on these different subdomains. According to the previous definition of $\mathcal{T}_h^n$, we have
\begin{align*}
    \mathcal{T}_h^n \setminus \mathcal{T}_h^{n-1} = \biggl\{ T \in \mathcal{T}_h^\mathcal{O} : &\; \biggl[ \varphi(t_n, x) < 0 \text{ for at least one vertex } x \text{ of } T \biggr] \\
    &\; \text{and } \biggl[ \varphi(t_{n-1}, x) \geq 0 \text{ for all vertices } x \text{ of } T \biggr] \\
    &\; \text{and } \biggl[ \varphi(t_{n-2}, x) \geq 0 \text{ for all vertices } x \text{ of } T \biggr] \biggr\}.
\end{align*}

\begin{remark}
Note that $\Omega_h^0$ is the domain covered by
\[ \mathcal{T}_h^0 = \{ T \in \mathcal{T}_h^\mathcal{O} : \varphi(t_0, x) < 0 \text{ for at least one vertex } x \text{ of } T \}. \]
Hence, $\Omega_h^1 \cap \Omega_h^0 = \Omega_h^0$. In particular, for $n=1$ the formula above for $\mathcal{T}_h^n\setminus\mathcal{T}_h^{n-1}$ has to be understood without the condition involving $\varphi(t_{n-2},\cdot)$, which is not defined in that case.
\end{remark}

For the stabilization terms, we also need to define $\mathcal{T}_h^{n,\Gamma}$. Here again, the theoretical definition, which involves $\partial\Omega(t)$ for all $t\in I_n$, is replaced by a computable one, namely the set of cells of $\mathcal{T}_h^n$ that are not completely inside $\Omega(t_n) \cap \Omega(t_{n-1})$:
\[ \mathcal{T}_h^{n,\Gamma} = \left\{ T \in \mathcal{T}_h^n : \varphi(t_n,x) \geq 0 \text{ or } \varphi(t_{n-1},x) \geq 0 \text{ for at least one vertex } x \text{ of } T \right\}. \]
Finally, we define $\mathcal{F}_h^{n,\Gamma}$ as the set of faces of $\mathcal{T}_h^{n,\Gamma}$ that are not located on the boundary of $\mathcal{T}_h^n$.

\begin{figure}
    \centering
    \includegraphics[width=0.5\linewidth]{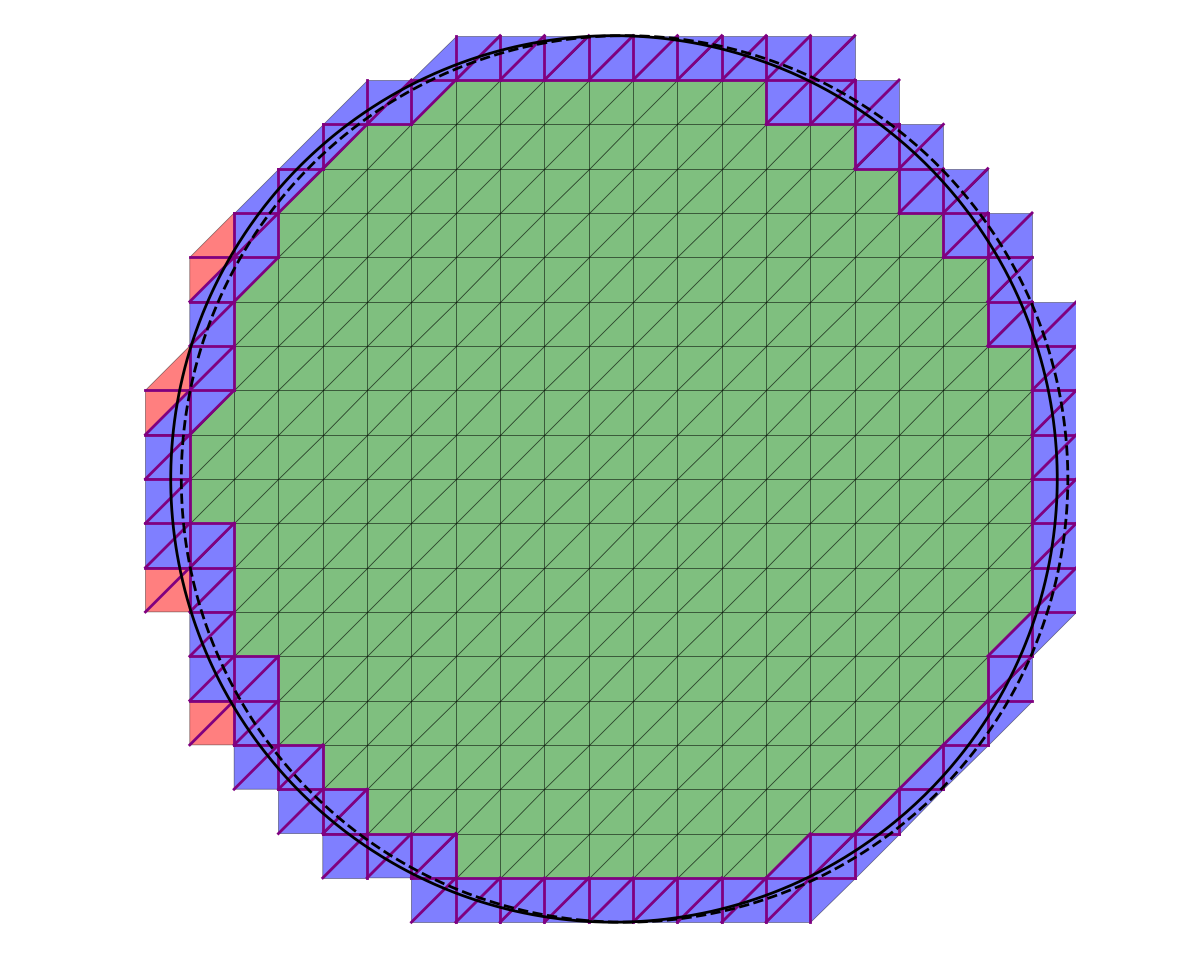}
    \caption{Submeshes for the first test case. The solid and dashed black lines represent $\varphi(t_n)$ and $\varphi(t_{n-1})$, respectively. The cell coloring denotes the following subsets: $\mathcal{T}_h^n$ (green, blue, red), $\mathcal{T}_h^{n-1} \cap \mathcal{T}_h^n$ (green, blue), $\mathcal{T}_h^n \setminus \mathcal{T}_h^{n-1}$ (red), and $\mathcal{T}_h^{n,\Gamma}$ (blue, red). The purple edges correspond to $\mathcal{F}_h^{n,\Gamma}$. For visualization purposes, the time step $\tau$ is intentionally chosen larger than in the actual simulations in order to clearly distinguish the different subdomains.}
    \label{fig:marker_cells}
\end{figure}

\noindent To continue with the implementation details of the scheme \eqref{Eq:scheme}, we use the midpoint quadrature rule for the time integration, so that for any continuous function $g$,
\[ \int_{I_n} g(t) \, \dt \approx \tau g(t_{n-1/2}), \]
with $t_{n-1/2} = (t_{n-1} + t_n)/2$.

\noindent To quantify the accuracy of the method, we evaluate the following relative error norms:
\begin{subequations}\label{Eq:error}
    \begin{equation}
        \frac{\|u_h-u_\mathrm{ref}\|_{\ell^2(0,T;H^1(\Omega_\mathrm{ref}))}}{\|u_\mathrm{ref}\|_{\ell^2(0,T;H^1(\Omega_\mathrm{ref}))}} \approx \biggl( \frac{\sum_{n=1}^N \int_{\Omega_\mathrm{ref}^n} |\nabla u_h^n(x) - \nabla u_\mathrm{ref}(x, t_n)|^2 \ \dx }{\sum_{n=1}^N \int_{\Omega_\mathrm{ref}^n} |\nabla u_\mathrm{ref}(x, t_n)|^2 \ \dx} \biggr)^{1/2},
    \end{equation}
    \begin{equation}
        \frac{\|u_h-u_\mathrm{ref}\|_{\ell^\infty(0,T;L^2(\Omega_\mathrm{ref}))}}{\|u_\mathrm{ref}\|_{\ell^\infty(0,T;L^2(\Omega_\mathrm{ref}))}} \approx \biggl( \frac{\max_{1 \leq n \leq N} \int_{\Omega_\mathrm{ref}^n} | u_h^n(x) - u_\mathrm{ref}(x, t_n)|^2 \ \dx }{\max_{1 \leq n \leq N}\int_{\Omega_\mathrm{ref}^n} | u_\mathrm{ref}(x, t_n)|^2 \ \dx} \biggr)^{1/2}.
    \end{equation}
\end{subequations}
The reference domain $\Omega_{\mathrm{ref}}^n$ is a very fine mesh refinement of $\Omega_h^n$. On this refined submesh, $u_\mathrm{ref}$ denotes the exact (reference) solution, while $u_h$ represents the $\varphi$-FEM solution interpolated onto $\Omega_\mathrm{ref}^n$. Note that the sums and maxima start at $n=1$: the scheme \eqref{Eq:scheme} never introduces $\tilde u_h^0$, the initial condition being taken into account directly in the first time step, cf. Section \ref{Sect:Scheme_and_main_results}. Note also that the quantity in the first equation only involves the $H^1$ semi-norm; since $u_h^n$ and $u_\mathrm{ref}$ both vanish on the boundary, this is equivalent to the full $H^1_0$ norm.

Note that, if the expected convergence is of order $C_1 h^p + C_2 \tau^m$, then we will
fix $\tau = h^{p/m}$ in such a way {that} we only need to observe numerically if the error is of order $h^p$.

\subsection{First test case: a translated circle}

For the first test case, we consider a simple moving circle domain, illustrated in Figure~\ref{fig:geometry}, defined by the level-set function
\begin{equation}
    \varphi(x,y;t) = (x - \cos(\omega t))^2 + y^2 - 1,
    \label{eq:phi disk}
\end{equation}
where $\omega = \pi/8$. The domain is thus a disk of radius $1$ with a center oscillating between $x=-1$ and $x=1$, with a maximal speed $\omega$. The value of $\omega$ is chosen to be sufficiently small so that the interface travels at most one cell layer per time step, in agreement with Assumption \ref{asm2}; the same choice is made in the two other test cases.

\paragraph{Manufactured solution.} We first consider the exact solution
\begin{equation}
    u_{\mathrm{ref}}(x,y;t)
    =
    \varphi(x,y;t)\sin(x)\sin(y),
    \label{Eq:manufactured solution}
\end{equation}
which satisfies the homogeneous Dirichlet boundary condition on $\partial\Omega(t)$ since $\varphi=0$ on the boundary implies $u_{\mathrm{ref}}=0$. Using this exact solution, we evaluate the relative errors in the $\ell^2(0,T;H^1(\Omega_{\mathrm{ref}}))$ and $\ell^\infty(0,T;L^2(\Omega_{\mathrm{ref}}))$ norms. The computations are performed with $\mathbb{P}_1$ finite elements and the convergence results are reported in Figure~\ref{fig:convergence1}.

To assess the performance of the $\varphi$-FEM scheme, we compare it against the classical conforming FEM method. However, it is important to note that the standard FEM cannot be straightforwardly implemented for moving domain problems---or at least, a naive implementation is not feasible. Indeed, when the domain evolves over time, the mesh nodes move, and the key difficulty lies in computing the material time derivative at points that did not belong to the mesh at the previous time step. To avoid this issue and enable a fair comparison, we resort to a classical change of variables. Starting from the heat equation formulated on the moving domain \eqref{eq:heat}, we pull back the equation onto a fixed reference domain. In our specific setting, the moving domain is defined by the level-set function \eqref{eq:phi disk} which describes a unit disk whose center translates along the $x$-axis. The reference domain is the unit disk $\hat{\Omega} = \{(\xi,\eta): \xi^2+\eta^2<1\}$, and we define $\hat{u}(\xi,\eta;t) = u(\xi + \cos(\omega t), \eta; t)$. Denoting by $\mathbf{w}(t) = (-\omega \sin(\omega t), 0)$ the velocity of the domain, the chain rule gives
\[
\partial_t u = \partial_t \hat{u} - \mathbf{w} \cdot \nabla \hat{u}, \qquad \Delta u = \Delta \hat{u}.
\]
The transformed equation on the fixed domain $\hat{\Omega}$ then reads
\[
\partial_t \hat{u} - \Delta \hat{u} - \mathbf{w}(t) \cdot \nabla \hat{u} = \hat{f},
\]
with $\hat{f}(\xi,\eta;t) = f(\xi + \cos(\omega t), \eta; t)$, supplemented with homogeneous Dirichlet conditions on $\partial\hat{\Omega}$ and the initial condition $\hat{u}(\xi,\eta;0) = {u^0}(\xi + 1, \eta)$. This formulation is now set on a fixed mesh, for which the conforming FEM applies without ambiguity.

The numerical results show that the $\varphi$-FEM scheme is sensitive to the interpolation order used for the level-set function $\varphi$. With $\mathbb{P}_1$ interpolation of $\varphi$, its accuracy is below that of the conforming FEM, although the convergence rates are the same. Using $\mathbb{P}_2$ interpolation of $\varphi$ improves the accuracy substantially, and the resulting $\ell^\infty(L^2)$ error becomes even smaller than that of the conforming FEM. For this reason, $\mathbb{P}_2$ interpolation of the level set is used in the two remaining test cases.

\begin{figure}[h]
    \centering
    \begin{tikzpicture}
        \begin{loglogaxis}[width=0.7\textwidth, xlabel={$h$}, ylabel={Error}, legend to name=Legend, legend columns=2]
        
            \addplot[mark=*, color=red, dashed] table[x=h, y=L2] {\FigFourPOneData};
            \addlegendentry{$\ell^\infty(0,T;L^2)$ $\varphi$-FEM ($\mathbb{P}_1$ interpolation)}
            \addplot[mark=*, color=blue, dashed] table[x=h, y=H1] {\FigFourPOneData};
            \addlegendentry{$\ell^2(0,T;H^1)$ $\varphi$-FEM ($\mathbb{P}_1$ interpolation)}
            
            \addplot[mark=*, color=red, dotted] table[x=h, y=L2] {\FigFourPTwoData};
            \addlegendentry{$\ell^\infty(0,T;L^2)$ $\varphi$-FEM ($\mathbb{P}_2$ interpolation)}
            \addplot[mark=*, color=blue, dotted] table[x=h, y=H1] {\FigFourPTwoData};
            \addlegendentry{$\ell^2(0,T;H^1)$ $\varphi$-FEM ($\mathbb{P}_2$ interpolation)}
           
            \addplot[mark=*, color=red] table[x=h_squared, y=L2(FEM)] {\FigFourFEMData};
            \addlegendentry{$\ell^\infty(0,T;L^2)$ FEM}
            \addplot[mark=*, color=blue] table[x=h_squared, y=H1(FEM)] {\FigFourFEMData};
            \addlegendentry{$\ell^2(0,T;H^1)$ FEM}
           
            \logLogSlopeTriangle{0.4}{0.2}{0.5}{1}{black} 
            \logLogSlopeTriangle{0.6}{0.2}{0.25}{2}{black}
        \end{loglogaxis}
    \end{tikzpicture}
    
    \begin{center}
        \pgfplotslegendfromname{Legend}
    \end{center}

    \caption{Convergence results for the translating circle using a manufactured solution. Relative errors are plotted against the mesh size $h$ for $\mathbb{P}_1$ finite elements with $\gamma=1$. The colour encodes the norm: blue for $\ell^2(0,T;H^1)$, computed with $\Delta t = h$, and red for $\ell^\infty(0,T;L^2)$, computed with $\Delta t = h^2$, following the rule $\tau=h^{p/m}$ stated above. The line style encodes the scheme: solid lines, conforming FEM on the fixed reference domain $\hat\Omega$; dashed lines, $\varphi$-FEM with $\mathbb{P}_1$ interpolation of $\varphi$; dotted lines, $\varphi$-FEM with $\mathbb{P}_2$ interpolation of $\varphi$. All errors are evaluated on the reference mesh $\Omega_\mathrm{ref}^n$. The reference slopes $O(h)$ and $O(h^2)$ are indicated for comparison. First-order convergence is observed for the $\ell^2(H^1)$ norm, in agreement with Theorem \ref{thm:error}, while the $\ell^\infty(L^2)$ norm achieves second-order convergence, which is higher than the rate $k+1/2=3/2$ conjectured after Theorem \ref{thm:error}.}
    \label{fig:convergence1}
\end{figure}

\paragraph{Influence of the stabilization parameter.} We investigate the sensitivity of the method to the stabilization parameter $\gamma$. Figure \ref{fig:gamma} displays the relative errors for several values of $\gamma$ and different mesh sizes. The results highlight the crucial role played by this stabilization parameter. The optimal choice yielding  the best precision appears to be $\gamma = 1$.

\begin{figure}
    \centering
    \begin{tikzpicture}
        \begin{loglogaxis}[width=0.48\textwidth, xlabel={$h$}, ylabel={Error $\ell^2(H^1)$}, legend to name=CommonLegend, legend columns=5]
            \addplot[thick, mark=*, color=blue] table[x=h, y=rel_L2_H1, col sep=space, restrict expr to domain={\thisrow{gamma}}{0.09:0.11}]{\FigFiveData};
            \addlegendentry{$\gamma = 10^{-1}$}
            
            \addplot[thick, mark=*, color=red] table[x=h, y=rel_L2_H1, col sep=space, restrict expr to domain={\thisrow{gamma}}{0.99:1.01}]{\FigFiveData};
            \addlegendentry{$\gamma = 1$}
            
            \addplot[thick, mark=*, color=green!60!black] table[x=h, y=rel_L2_H1, col sep=space, restrict expr to domain={\thisrow{gamma}}{9.9:10.1}]{\FigFiveData};
            \addlegendentry{$\gamma = 10$}
            
            \addplot[thick, mark=*, color=orange] table[x=h, y=rel_L2_H1, col sep=space, restrict expr to domain={\thisrow{gamma}}{99:101}]{\FigFiveData};
            \addlegendentry{$\gamma = 10^{2}$}
            
            \addplot[thick, mark=*, color=purple] table[x=h, y=rel_L2_H1, col sep=space, restrict expr to domain={\thisrow{gamma}}{999:1001}]{\FigFiveData};
            \addlegendentry{$\gamma = 10^{3}$}
            
            \logLogSlopeTriangle{0.75}{0.3}{0.2}{1}{black}
        \end{loglogaxis}
    \end{tikzpicture}
    \begin{tikzpicture}
        \begin{loglogaxis}[width=0.48\textwidth, xlabel={$h$}, ylabel={Error $\ell^\infty(L^2)$}]
            \addplot[thick, mark=*, color=blue, forget plot] table[x=h, y=rel_Linf_L2, col sep=space, restrict expr to domain={\thisrow{gamma}}{0.09:0.11}]{\FigFiveData};
            
            \addplot[thick, mark=*, color=red, forget plot] table[x=h, y=rel_Linf_L2, col sep=space, restrict expr to domain={\thisrow{gamma}}{0.99:1.01}]{\FigFiveData};
            
            \addplot[thick, mark=*, color=green!60!black, forget plot] table[x=h, y=rel_Linf_L2, col sep=space, restrict expr to domain={\thisrow{gamma}}{9.9:10.1}]{\FigFiveData};
            
            \addplot[thick, mark=*, color=orange, forget plot] table[x=h, y=rel_Linf_L2, col sep=space, restrict expr to domain={\thisrow{gamma}}{99:101}]{\FigFiveData};
            
            \addplot[thick, mark=*, color=purple, forget plot] table[x=h, y=rel_Linf_L2, col sep=space, restrict expr to domain={\thisrow{gamma}}{999:1001}]{\FigFiveData};
            
            \logLogSlopeTriangle{0.75}{0.3}{0.2}{2}{black}
        \end{loglogaxis}
    \end{tikzpicture}

    \begin{center}
        \pgfplotslegendfromname{CommonLegend}
    \end{center}

    \caption{First test case, translating circle. Influence of the stabilization parameter $\gamma$ on the convergence. Left: $\ell^2(H^1)$ relative error. Right: $\ell^\infty(L^2)$ relative error. Results are shown for $\Delta t = h$ and various $\gamma$. The value $\gamma = 1$ provides the best compromise between stability and accuracy.}
    \label{fig:gamma}
\end{figure}

\paragraph{\texorpdfstring{Manufactured solution with space-time $\tilde{u}$.}{Manufactured solution with space-time u-tilde.}}
We now consider the exact solution
\begin{equation}
    u_{\mathrm{ref}}(x,y;t)
    =
    \varphi(x,y;t)\sin(x)\sin(y)e^{-t},
    \label{Eq:manufactured solution time-dependent}
\end{equation}
which satisfies the homogeneous Dirichlet boundary condition on $\partial\Omega(t)$ since $\varphi=0$ on the boundary implies $u_{\mathrm{ref}}=0$. In contrast to the previous case, the profile $\tilde{u}(x,y;t) = \sin(x)\sin(y)e^{-t}$ now depends on time as well as space, so that the time dependence of the exact solution is no longer carried solely by $\varphi$. Using this exact solution, we evaluate the relative errors in the $\ell^2(0,T;H^1(\Omega_{\mathrm{ref}}))$ and $\ell^\infty(0,T;L^2(\Omega_{\mathrm{ref}}))$ norms. The computations are performed with $\mathbb{P}_1$ finite elements and the convergence results are reported in Figure~\ref{fig:convergence1bis}.

\begin{figure}[h]
    \centering
    \begin{tikzpicture}
        \begin{loglogaxis}[width=0.7\textwidth, xlabel={$h$}, ylabel={Error}, legend to name=LegendFigSpaceTime, legend columns=2]
        
            \addplot[mark=*, color=red] table[x=h, y=L2] {\FigSixData};
            \addlegendentry{$\ell^\infty(0,T;L^2)$ $\varphi$-FEM ($\mathbb{P}_2$ interpolation)}
            \addplot[mark=*, color=blue] table[x=h, y=H1] {\FigSixData};
            \addlegendentry{$\ell^2(0,T;H^1)$ $\varphi$-FEM ($\mathbb{P}_2$ interpolation)}
           
            \logLogSlopeTriangle{0.6}{0.2}{0.74}{1}{black} 
            \logLogSlopeTriangle{0.6}{0.2}{0.22}{2}{black}
        \end{loglogaxis}
    \end{tikzpicture}
    
    \begin{center}
        \pgfplotslegendfromname{LegendFigSpaceTime}
    \end{center}
    
    \caption{First test case, translating circle. Convergence results using a manufactured solution with time-space $\tilde{u}$, with $\tilde{u}$ depending on space and time. Relative errors in the $\ell^2(H^1)$ and $\ell^\infty(L^2)$ norms are plotted against $h$ for $\mathbb{P}_1$ elements, $\mathbb{P}_2$ interpolation of the level set and $\gamma=1$. Blue: the $\ell^2(H^1)$ error, computed with $\Delta t = h$; red: the $\ell^\infty(L^2)$ error, computed with $\Delta t = h^2$. The observed convergence rates are first-order for the $\ell^2(H^1)$ norm and second-order for the $\ell^\infty(L^2)$ norm. As for the first test case, we have a good agreement with the theoretical result for the $\ell^2(H^1)$ relative error norms, while we obtain better accuracy in norm $\ell^\infty(L^2)$.}
    \label{fig:convergence1bis}
\end{figure}

\subsection{Second test case: a star in rotation}

\noindent For this second test case, the source term is derived from a manufactured solution, allowing us to compare the $\varphi$-FEM solution with the exact one. We consider a rotating ``star'' centered at the origin, see Figure~\ref{fig:geometry}, described by the following level-set function:
\begin{equation*}
    \varphi(x,y;t) = (x^2 + y^2)^2 \bigl(2.5 + 2.25 \sin(5(\operatorname{atan2}(y, x) - \omega t))\bigr) - 1,
\end{equation*}
where $\omega = \pi/8$ is the angular velocity. {This} geometry is inspired by the first test case considered in \cite{neumann}. In this domain, we consider the exact solution $u_{\mathrm{ref}}(x,y;t)=\varphi(x,y;t)\sin(x)\cos(y)$. Figure \ref{fig:convergence2} demonstrates that the method maintains its optimal convergence properties even for {geometries with a strongly varying curvature}. 

\begin{figure}
    \centering
    \begin{tikzpicture}
        \begin{loglogaxis}[width=0.7\textwidth, xlabel={$h$}, ylabel={Error}, legend pos=south east]
            \addplot[mark=*, color=blue] table[x=h, y=H1] {\FigSevenData};
            \addlegendentry{$\ell^2(0,T;H^1(\Omega_{\mathrm{ref}}))$}
            
            \addplot[mark=*, color=red] table[x=h, y=L2] {\FigSevenData};
            \addlegendentry{$\ell^\infty(0,T;L^2(\Omega_{\mathrm{ref}}))$}
            
            \logLogSlopeTriangle{0.55}{0.2}{0.65}{1}{black} 
            \logLogSlopeTriangle{0.45}{0.2}{0.12}{2}{black}
        \end{loglogaxis}
    \end{tikzpicture}

    \caption{Second test case, rotating star. Convergence results using a manufactured solution. Relative errors in the $\ell^2(H^1)$ and $\ell^\infty(L^2)$ norms are plotted against $h$ for $\mathbb{P}_1$ elements, $\mathbb{P}_2$ interpolation of the level set and $\gamma=1$. Blue: the $\ell^2(H^1)$ error, computed with $\Delta t = h$; red: the $\ell^\infty(L^2)$ error, computed with $\Delta t = h^2$. The observed convergence rates are first-order for the $\ell^2(H^1)$ norm and second-order for the $\ell^\infty(L^2)$ norm. As for the first test case, we have a good agreement with the theoretical result for the $\ell^2(H^1)$ relative error norms, while we obtain better accuracy in norm $\ell^\infty(L^2)$.}
    \label{fig:convergence2}
\end{figure}

\subsection{Third test case: a 3D moving {popcorn}}

\noindent Finally, we assess the performance of the method in three dimensions on a moving popcorn-shaped domain, see Figure \ref{fig:geometry}. The level-set function is defined as:
\begin{equation*}
    \varphi(x,y,z;t) = x_{\text{rot}}^2 + y_{\text{rot}}^2 + z^2 - (0.6)^2 - 1.5 \sum_{k=0}^{11} \exp \left(-\frac{(x_{\text{rot}} - x_k)^2 + (y_{\text{rot}} - y_k)^2 + (z - z_k)^2}{0.3^2} \right),
\end{equation*}
with
\begin{align*}
    &x_{\text{rot}} = x \cos(\omega t) - y \sin(\omega t), \quad 
    y_{\text{rot}} = x \sin(\omega t) + y \cos(\omega t),\\
    &x_k = \frac{1.2}{\sqrt{5}} \cos\left(\frac{2 k \pi}{5}\right),\quad
    y_k = \frac{1.2}{\sqrt{5}} \sin\left(\frac{2 k \pi}{5}\right),\quad
    z_k = \frac{0.6}{\sqrt{5}}, \qquad k=0,\dots,4;\\
    &x_k = \frac{1.2}{\sqrt{5}} \cos\left(\frac{(2k - 11)\pi}{5}\right),\quad
    y_k = \frac{1.2}{\sqrt{5}} \sin\left(\frac{(2k - 11)\pi}{5}\right),\quad
    z_k = -\frac{0.6}{\sqrt{5}}, \qquad k=5,\dots,9;\\
    &x_k = 0,\ y_k = 0,\ z_k = 0.6, \qquad k=10;\\
    &x_k = 0,\ y_k = 0,\ z_k = -0.6, \qquad k=11;\\
    &\omega = \frac{\pi}{8} \quad \text{(angular velocity)}.
\end{align*}
The points $(x_k,y_k,z_k)$ define the positions of the popcorn bumps. This geometry represents a challenging test case due to its fine {geometrical} features. It is inspired by the third test case considered in \cite{heat}.
In this domain, we consider the exact solution $u_{\mathrm{ref}}(x,y,z;t)=\varphi(x,y,z;t)\sin(x)\sin(y)\sin(z)$.

Figure \ref{fig:convergence3} shows that the optimal convergence orders are preserved in three dimensions. This result confirms the robustness and scalability of the $\varphi$-FEM approach for problems involving complex, time-dependent domains in higher dimensions.

\begin{figure}
    \centering
    \begin{tikzpicture}
        \begin{loglogaxis}[width=0.7\textwidth, xlabel={$h$}, ylabel={Error}, legend pos=south east]
        
        \addplot[mark=*, color=blue] table[x=h, y=H1] {\FigEightData};
        \addlegendentry{$\ell^2(0,T;H^1(\Omega_{\mathrm{ref}}))$}
        
        \addplot[mark=*, color=red] table[x=h, y=L2] {\FigEightData};
        \addlegendentry{$\ell^\infty(0,T;L^2(\Omega_{\mathrm{ref}}))$}
        
        \logLogSlopeTriangle{0.6}{0.2}{0.77}{1}{black} 
        \logLogSlopeTriangle{0.8}{0.2}{0.27}{2}{black}
        \end{loglogaxis}
    \end{tikzpicture}
    
    \caption{Third test case, a moving {popcorn} in 3D. Relative errors with respect to $h$ for $\mathbb{P}_1$ elements, $\mathbb{P}_2$ interpolation of the level set and $\gamma=1$. Blue: the $\ell^2(H^1)$ error, computed with $\Delta t = h$; red: the $\ell^\infty(L^2)$ error, computed with $\Delta t = h^2$. The observed convergence rates are first-order in $\ell^2(H^1)$ and second-order in $\ell^\infty(L^2)$, demonstrating that the $\varphi$-FEM scheme is effective even for complex three-dimensional evolving geometries. As for the two-dimensional cases, we have a good agreement with the theoretical result for the $\ell^2(H^1)$ relative error norms, while we obtain better accuracy in norm $\ell^\infty(L^2)$.}
    \label{fig:convergence3}
\end{figure}

\section{Conclusions}
\label{Sect:Conclusion}

We have proposed and analysed a $\varphi$-FEM scheme for the heat equation posed on a domain evolving in time. The scheme combines the $\varphi$-FEM paradigm in space, in which the level-set function is inserted into the discrete unknown so that the homogeneous Dirichlet condition is satisfied by construction, with a lowest-order discontinuous Galerkin discretization in time on space--time slabs. The active mesh is rebuilt at each slab from the background mesh only, so that no remeshing and no mesh deformation is needed, and the implementation reduces to assembling standard bilinear forms on submeshes of a fixed background mesh.

Under regularity assumptions on the level set and a patch-covering assumption on the boundary submesh, we have proved a coercivity estimate for the stabilized bilinear form and derived an optimal \emph{a priori} error estimate of order $O(h^k+\tau)$ in the $L^2(0,T;H^1)$ norm. The three test cases -- a translating disk, a rotating star and a moving three-dimensional popcorn -- confirm this rate. They also exhibit a second-order convergence in the $\ell^\infty(0,T;L^2)$ norm, which is better than the rate $k+1/2$ one would expect from the analysis, and they show that the accuracy of the method depends significantly on the interpolation order used for the level set.

Several directions remain open. The most immediate one is the proof of the $L^{\infty}(L^2)$ estimate stated after Theorem \ref{thm:error}, which would require adapting a discrete Aubin--Nitsche duality argument to moving domains. A second one is to carry out the analysis directly with a discrete level-set function, as is done for the stationary problem, so as to cover the scheme that is actually implemented. Finally, higher-order elements in space, higher-order discontinuous Galerkin discretizations in time, and the extension to Neumann or interface conditions and to the Stokes system on moving domains are natural continuations of this work.

\section*{Declaration of generative AI use}
Claude (Anthropic) was used to improve the language and presentation of the manuscript, and to assist with the implementation of the algorithms of Section~\ref{Sect:Numerical_results}. All AI-assisted text was subsequently reviewed and edited by the authors. All code was reviewed by the authors and validated against analytical solutions and the convergence rates predicted by the theory. 

\section*{Funding}
This work was supported by the Agence Nationale de la Recherche, Project PhiFEM, under grant ANR-22-CE46-0003-01, and by the EIPHI Graduate School (contract ANR-17-EURE-0002).
This project has also received financial support from the CNRS through the MITI interdisciplinary programs.

\bibliographystyle{abbrvnat}
\bibliography{biblio}

\end{document}